\newcommand{\R}{\ensuremath{\mathbb{R}}}     
\newcommand{\C}{\ensuremath{\mathbb{C}}}     
\newcommand{\Z}{\ensuremath{\mathbb{Z}}}     
\newcommand{\F}{\ensuremath{\mathbb{F}}}
\renewcommand{\P}{\ensuremath{\mathbb{P}}}
\DeclareMathOperator*{\Coker}{Coker}
\renewenvironment{proof}[1][\proofname]{\par
	\pushQED{\qed}
	\normalfont \topsep6\p@\@plus6\p@\relax
	\trivlist
	\item[\hskip\labelsep
	#1]\ignorespaces
}
{
	\popQED\endtrivlist\@endpefalse%
}
\renewcommand\proofname{\textbf{Proof} \hspace{1em}}
\theoremstyle{plain}
\newtheorem{theorem}{Theorem}[section]
\newtheorem{lemma}[theorem]{Lemma}
\newtheorem{proposition}[theorem]{Proposition}
\newtheorem*{remark}{Remark}
\newcommand{\bpar}{\ensuremath{\bar{\partial}}}
\newcommand{\OO}{\ensuremath{\mathcal{O}}}
\newcommand{\MM}{\ensuremath{\mathcal{M}}}
\newcommand{\NN}{\ensuremath{\mathcal{N}}}
\newcommand{\JJ}{\ensuremath{\mathcal{J}}}
\title{On SYZ mirrors of Hirzebruch surfaces}
\date{}
\author{Honghao Jing}
\begin{document}
	
    \maketitle

    \begin{abstract}
        The Strominger--Yau--Zaslow (SYZ) approach to mirror symmetry constructs a mirror space and a superpotential from the data of a Lagrangian torus fibration on a K\"ahler manifold with effective first Chern class. For K\"ahler manifolds whose first Chern class is not nef, the SYZ construction is further complicated by the presence of additional holomorphic discs with non-positive Maslov index.
        
        In this paper, we study SYZ mirror symmetry for two of the simplest non-Fano toric examples: the Hirzebruch surfaces \(\F_3\) and \(\F_4\).  Our approach is to regularize moduli spaces of stable holomorphic discs using obstruction sections arising from infinitesimal deformations of the complex structure. For \(\F_3\), we determine the SYZ mirror associated to generic regularizing perturbations of the complex structure, and demonstrate that the mirror depends on the choice of perturbation. For \(\F_4\), we determine the SYZ mirror for a specific regularizing perturbation, where the mirror superpotential is an explicit infinite Laurent series. Finally, we relate this superpotential to those arising from other perturbations of $\F_4$, as determined in the literature \cite{CPS24,BGL25}, via a scattering diagram. 
    \end{abstract}

    \tableofcontents

    \section{Introduction}

        \subsection{Background and Main Results}

        Mirror symmetry, originally discovered in string theory, reveals a deep duality between complex geometry and symplectic geometry on `mirror pairs' of spaces.

        A key development in this area is the Strominger--Yau--Zaslow (SYZ) conjecture (\cite{SYZ96}), which provides a geometric interpretation of mirror symmetry. It proposes that mirror Calabi-Yau manifolds should admit dual special Lagrangian torus fibrations over a common base. 

        The SYZ conjecture extends beyond Calabi-Yau manifolds. The mirror of a non-Calabi-Yau K\"ahler manifold $X$ relative to an anticanonical divisor $D$ is known to be a Landau-Ginzburg model, i.e., a K\"ahler manifold $Y$ constructed from a moduli space of special Lagrangian tori in $X\setminus D$, together with a holomorphic function $W$ on it (\cite{HV00}). The function $W$, which is called the superpotential, encodes information about the $\mathfrak{m}_0$ obstruction in Floer homology, and is given by a sum of weighted counts of Maslov index $2$ holomorphic discs with boundary on a fixed Lagrangian torus.
        ($Y$ is actually constructed as an analytic space over a Novikov field, to avoid possible convergence issues in the definition of $W$.)

        Moreover, if some of the Lagrangian tori bound Maslov index $0$ discs, then wall-crossing phenomena occur and the actual mirror space differs from the moduli space of Lagrangian tori by instanton corrections (\cite{DA07}). (See for example \cite{HY25} for a rigorous argument.)

        Counting these holomorphic discs of index $0$ and $2$ essentially amounts to calculating certain open Gromov-Witten invariants. This is straightforward in some cases, e.g., for Fano toric manifolds. In general, the moduli spaces of holomorphic discs need not be regular. It is therefore necessary to introduce appropriate perturbations to achieve transversality, which ensures that the resulting disc counts are well defined. This phenomenon can already occur in the toric case when $X$ is not Fano. In this context, Hirzebruch surfaces, despite their relatively simple geometric structure, provide a natural testing ground for studying the SYZ mirror construction outside the Fano setting and for examining its dependence on the choice of the regularizing perturbation.

        In this paper, we regularize the disc moduli spaces on Hirzebruch surfaces by perturbing the complex structure through explicit integrable deformations, rather than perturbing the almost complex structure (\cite{J-curve}) or invoking virtual techniques (\cite{FOOObook}); this choice endows the perturbation with a more concrete geometric meaning and makes the dependence of the mirror on the choice of perturbation more transparent.

        \begin{remark}
        Before giving more specific background on Hirzebruch surfaces, we first clarify what we mean by the SYZ mirror of a toric manifold $X$ relative to the toric anticanonical divisor $D$.

As a space, the mirror is the moduli space of weakly unobstructed objects in the Fukaya category of \(X\) supported on product Lagrangian tori. Concretely, a point of this space corresponds to the following data: a product torus in
\[
    X \setminus D \cong (\mathbb{C}^*)^n,
\]
together with a unitary rank-\(1\) Novikov local system, Floer perturbation data used to regularize the moduli spaces of stable holomorphic discs in \(X\), and optionally a weak bounding cochain (which in our setting can be absorbed into the choice of local system).

This moduli space carries a natural regular function—the superpotential—defined by the Floer-theoretic obstruction term
\(\mathfrak{m}_0\), which is given by a regularized weighted count of Maslov index $2$ discs with boundary on the given product torus.

The mirror admits coordinate charts modeled on domains in \((\Lambda^*)^n\), in which the superpotential is expressed as a Laurent series (a polynomial in the Fano toric case). Distinct charts arise from different enumerative behaviors of these tori and are related by wall-crossing coordinate transformations. For Fano toric manifolds, there is only a single chart.
\end{remark}

        The $k^{\text{th}}$ \textbf{Hirzebruch surface} $\F_k$ is the total space of the projective bundle \[\P(\OO_{\P^1}\oplus\OO_{\P^1}(k)).\]Hirzebruch surfaces are basic examples of toric Kähler surfaces, whose moment polytopes can be depicted as trapezoids. See Figure \ref{image_1}, where the slanted edge of the trapezoid has slope $-\frac{1}{k}$.

        \begin{figure}[htbp] 
            \centering  
            \includegraphics[width=0.35\textwidth]{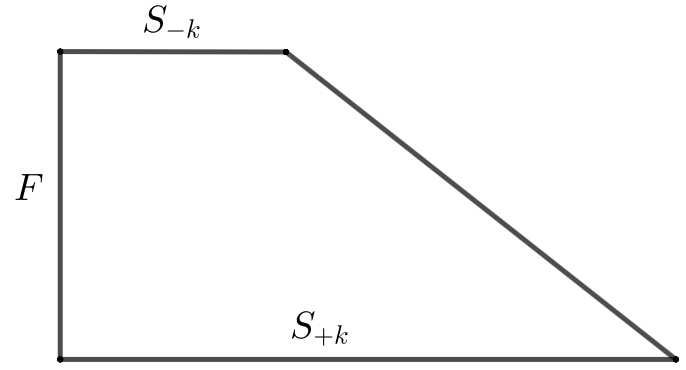}
            \caption{The $k^{\text{th}}$ Hirzebruch surface $\F_k$}  
            \label{image_1}  
        \end{figure}

        For $k=0,1$, the surfaces $\F_0,\F_1$ are Fano. The mirror of a Fano toric manifold is the Landau-Ginzburg model $((\Lambda^*)^n,W_0)$, where $\Lambda$ denotes the Novikov field
        \[\Lambda=\{\sum_{i=0}^\infty c_i T^{\lambda_i}|\ c_i\in\C,\ \lambda_i\in\R,\ \lambda_i\to\infty\}.\]The superpotential $W_0:(\Lambda^*)^n\to\Lambda$ is an analytic function (in fact a Laurent polynomial) on the mirror space $(\Lambda^*)^n$, combinatorially determined by the moment polytope of the toric manifold. More specifically, $W_0$ consists of terms associated to Maslov index $2$ holomorphic discs (with boundary on a fixed Lagrangian torus, which is represented by a point in the moment polytope) representing `basic classes' in this toric manifold. See \cite{CO06}. 

        In the case of the Hirzebruch surface $\F_k$, after suitably choosing coordinates $(x,y)$ for $(\Lambda^*)^2$, $W_0$ has the form\[W_0(x,y)=x+y+\frac{T^{\omega(S_{+k})}}{xy^k}+\frac{T^{\omega(F)}}{y}.\]Here $[F]\in H_2(\F_k)$ is the class of the fiber, $[S_{+k}]$ is the class of a section with self-intersection $k$, and $\omega(-)$ indicates the symplectic area of that class.

        For $k=2$, the surface $\F_2$ is semi-Fano. The mirror space of a semi-Fano toric surface is still $(\Lambda^*)^2$, but the superpotential $W$ differs from $W_0$ by additional terms which count virtual contribution of Maslov index $2$ stable holomorphic discs, i.e., the connected union of a disc in some basic class and a collection of spheres of zero Chern number. See \cite{CL14}.

        As in \cite[\S 3.2]{DA09} and \cite{FOOO12}, the superpotential for $\F_2$ is 
        \[W(x,y)=x+y+\frac{T^{\omega(S_{+2})}}{xy^2}+\frac{T^{\omega(F)}}{y}+\frac{T^{\omega(S_{-2}+F)}}{y}.\]Here $S_{-2}$ is the exceptional section of $\F_2$ with self-intersection $-2$.

        For $k\geq 3$, the surface $\F_k$ contains a holomorphic sphere of negative Chern number (the exceptional section $S_{-k}$), thereby allowing the existence of Maslov index $0$ stable discs. These stable discs will be the union of a disc of positive Maslov index and a collection of spheres, containing the sphere with negative Chern number. 
        
        Some of these index $0$ stable discs can be made regular (in the sense that the corresponding moduli space is regular) possibly after a perturbation of the complex structure of $\F_k$. In this way, we can observe the occurrence of the wall-crossing phenomenon, where the walls consist of Lagrangian tori that bound these index $0$ stable discs (\cite[\S 3]{DA07}). In this case, the mirror space will be a suitable gluing of various regions delimited by the walls, instead of $(\Lambda^*)^2$ simply as before.  This will provide examples (for instance Theorem \ref{thm 1.1} below) in which the walls arise from holomorphic spheres with negative Chern number, rather than from singular Lagrangian torus fibers.

        In general, the appearance of the walls depends on the chosen perturbation of the complex structure, and so do the resulting mirror space and superpotential. This dependence ultimately reflects the fact that open Gromov-Witten invariants vary with the regularizing perturbation, in contrast to their closed counterparts.
        
        In Section $2$, we illustrate this phenomenon using the example of the third Hirzebruch surface $\F_3$. We prove the following. (Let $A = \omega(S_{-3}+F)$ and $B=\omega(F)$.)

        \begin{theorem}\label{thm 1.1}
            With a generic perturbation of the complex structure on $\F_3$, the SYZ mirror of $\F_3$ is the Landau-Ginzburg model $(\F_3^\vee,W)$, where the mirror space
            \[\F_3^\vee=\{(u,v,w)\in\Lambda^2\times\Lambda^*|\ uv=1+T^Aw\}.\]The superpotential $W:\F_3^\vee\to\Lambda$ restricts to \[x+y+\frac{T^{A+2B}}{xy^3}+\frac{T^{B}}{y}+2\frac{T^{A+B}}{y^2}+\frac{T^{A}x}{y}\] on the $(x,y)$-coordinate chart, where $(x,y)=(v^{-1},w^{-1})\in(\Lambda^*)^2$. Analogously, $W$ restricts to
            \[x'+y'+\frac{T^{A+2B}}{x'y'^3}+\frac{T^{B}}{y'}+2\frac{T^{A+B}}{y'^2}+\frac{T^{2A+2B}}{x'y'^4}\]on the $(x',y')$-coordinate chart, where $(x',y')=(u,w^{-1})\in(\Lambda^*)^2$. 
            
            Each of the two affine charts corresponds to one of the two regions delimited by a wall, along which the tori bound Maslov index $0$ holomorphic discs. The position of this wall can be determined explicitly and depends on the choice of the perturbation.
        \end{theorem}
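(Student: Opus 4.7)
The plan is to combine the Cho--Oh enumeration of Maslov $2$ basic holomorphic discs with an analysis of stable disc bubble configurations involving the exceptional section $S_{-3}$, for a generic perturbation $\tilde{J}$ of the complex structure that regularises the relevant moduli spaces. First I would fix the toric moment polytope of $\F_3$ and the four basic disc classes $\beta_1,\ldots,\beta_4$ attached to the toric divisors; in the rescaled $(x,y)$ coordinates on $(\Lambda^*)^2$ they account for the four terms $x$, $y$, $T^{A+2B}/(xy^3)$, $T^{B}/y$ common to both chart expressions for $W$. The only sphere class of negative Chern number is $[S_{-3}]$ (with $c_1=-1$), and this is therefore the sole source of walls and of the extra superpotential terms.

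Next I would locate the wall. After choosing $\tilde{J}$ so that $S_{-3}$ deforms to a $\tilde{J}$-holomorphic sphere $S_{-3}^{\tilde{J}}$ and all relevant genus zero moduli are cut out transversally, any Maslov $0$ stable disc with boundary on a fibre $L$ must be of the form $\beta_i+[S_{-3}^{\tilde{J}}]$ attached at an interior node. A dimension count identifies the locus $\mathcal{W}$ of such fibres as a smooth real curve in the interior of the moment polytope, whose position tracks that of $S_{-3}^{\tilde{J}}$ and hence depends on the perturbation. On each of the two chambers cut out by $\mathcal{W}$ the Floer theory is unobstructed, so the count of Maslov $2$ stable discs defines a Laurent-type superpotential. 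Enumerating Maslov $2$ stable discs by attaching copies of $S_{-3}^{\tilde{J}}$ (and, when needed, of the fibre class) to discs of Maslov index $\geq 4$, and weighting by the appropriate multiple-cover automorphism factors, should reproduce the extra terms $2T^{A+B}/y^2$ and $T^{A}x/y$ on the $(x,y)$ chart, and $2T^{A+B}/y'^2$ and $T^{2A+2B}/(x'y'^4)$ on the $(x',y')$ chart.

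Finally I would glue the two charts. The wall-crossing transformation across $\mathcal{W}$ coming from the one-parameter family of Maslov $0$ stable discs is expected to be the unipotent map $(x,y)\mapsto\bigl(x(1+T^{A}/y),\,y\bigr)$; substituting $(x,y)=(v^{-1},w^{-1})$ and $(x',y')=(u,w^{-1})$ turns this identification into the relation $uv=1+T^{A}w$ defining $\F_3^\vee$, and a direct computation shows that the two chart expressions for $W$ agree on the overlap. The main obstacle I expect is the stable disc enumeration in the previous step: verifying regularity of the perturbed moduli spaces for each relevant Maslov $2$ class, ruling out unexpected contributions from higher multiple covers of $S_{-3}^{\tilde{J}}$, and pinning down the correct integer multiplicities (notably the factor $2$ in $2T^{A+B}/y^2$). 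Fitting these counts into a Chan--Lau type open Gromov--Witten formalism extended beyond the semi-Fano range is the most delicate part of the argument.
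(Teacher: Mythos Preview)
Your plan contains a fundamental misconception about what happens to $S_{-3}$ under a \emph{generic} perturbation. Since $c_1([S_{-3}])=-1$, the moduli of $\tilde J$-spheres in the class $\sigma$ has negative virtual dimension, so for generic $\tilde J$ there is \emph{no} smooth $\tilde J$-holomorphic sphere $S_{-3}^{\tilde J}$ at all. Thus the bubble configurations ``$\beta_i+[S_{-3}^{\tilde J}]$'' you want to analyse simply do not exist, and neither the wall nor the extra superpotential terms can arise in the way you describe. What actually happens, and what the paper spends most of Section~2 establishing via an obstruction-bundle argument, is that for each generic perturbation exactly one \emph{nodal} sphere $S_{-3}\cup F_z$ smooths into a regular $\tilde J$-sphere $E$ in the class $\sigma+\phi$ (with $c_1=0$); the wall is the set of fibres meeting $F_z$, and the index $0$ discs on the wall are smoothings of $\beta_2\cup S_{-3}$, not nodal discs with a persistent $(-3)$-bubble. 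Identifying this $z$ (the zero of a holomorphic section of $\mathcal Ob\cong\mathcal O_{\P^1}(1)$ determined by the perturbation) is the key step you are missing.

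This error propagates to the rest of your outline. First, the wall is not located by a naive dimension count of $\beta_i+\sigma$ configurations but by the obstruction-bundle section; moreover one must separately rule out index $0$ discs off the wall using positivity of intersection with $E$ and with spheres $E_{z_1,z_2}$ in the class $\sigma+2\phi$ (Propositions~2.5--2.6). Second, the paper does \emph{not} compute the extra coefficients $2$ and $1$ by direct enumeration as you propose; instead it interpolates between the given generic perturbation and the toric perturbation of \cite{DA09} (for which the wall is absent and the answer is known), and uses a cobordism argument to match counts on each side. Third, the wall-crossing factor $1+T^A/y$ is not computed from the index $0$ discs but \emph{deduced a posteriori} by comparing the two already-determined superpotential expressions. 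Your proposed direct Chan--Lau type enumeration beyond the semi-Fano range is not carried out in the paper and, as you note yourself, would be the hardest part; the paper avoids it entirely.
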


        \begin{remark}
            The mirror space $(\Lambda^*)^2$ obtained through toric perturbations (as in \cite[\S 3]{DA09}) can be viewed as an affine open subspace—the $(x,y)$-chart of $\F_3^\vee$, and the superpotential on $(\Lambda^*)^2$ is the restriction of the superpotential on $\F_3^\vee$. 
            This is in line with the principle that the toric construction of the mirror yields only a single coordinate chart of a larger mirror space.
            
            In fact, $\F_3$ admits no $T^2$-equivariant deformations of its complex structure. The regularizing deformation of the complex structure considered in \cite[\S 3]{DA09} is only $S^1$-equivariant; however, in view of its compatibility with the toric structure and the fact that the resulting mirror is $(\Lambda^*)^2$, we will still refer to such a perturbation as toric.
        \end{remark}

        In Subsection $2.2$, we consider the obstruction bundle of an irregular moduli space of index $0$ stable discs to detect regular index $0$ discs and the appearance of the wall after a generic perturbation. In Subsection $2.3$, we derive the expression of the superpotential on both sides of the wall and establish the wall-crossing transformation which implies the expression of the corrected mirror space.

        In Section $3$, we consider the fourth Hirzebruch surface $\F_4$, and fix a specific toric perturbation of $\F_4$. We prove that there are no index $0$ discs and hence no walls, and derive the expression of superpotential with respect to the fixed perturbation. The superpotential is a Laurent series rather than a Laurent polynomial, i.e., there are infinitely many index $2$ classes that contribute.

        More precisely, we prove the following. (Let $A = \omega(S_{-4}+2F)$ and $B=\omega(F)$.)

        \begin{theorem}\label{main theorem 2}
            There exists a perturbation of the complex structure on $\F_4$ for which the superpotential is a Laurent series on the mirror space $(\Lambda^*)^2$, given by
            \[W(x,y)=y+\frac{T^A}{y}+\frac{T^B}{y}\left(\sum_{k=0}^\infty (2k+1)\left(\frac{T^A}{y^2}\right)^k\right)
                +\left(x+\frac{T^{A+2B}}{xy^4}\right)\left(\sum_{k=0}^\infty (k+1)\left(\frac{T^A}{y^2}\right)^k\right).
                \]
        \end{theorem}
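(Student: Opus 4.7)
The plan is to enumerate every Maslov index $2$ class $\beta\in H_2(\F_4,L)$ that is represented by a stable holomorphic disc under the specified perturbation, compute the associated open Gromov--Witten invariant $n_\beta$, and package the results as $W=\sum_\beta n_\beta T^{\omega(\beta)}z^{\partial\beta}$. The four basic toric disc classes $\beta_1,\dots,\beta_4$ give the four basic contributions $y$, $T^B/y$, $x$ and $T^{A+2B}/(xy^4)$. Because $S_{-4}$ is the unique holomorphic sphere of negative Chern number in $\F_4$ ($c_1(S_{-4})=-2$), every further Maslov--$2$ class must contain copies of $S_{-4}$ compensated by positive multiples of $\beta_1$ so that the total Chern number of the sphere bubbles vanishes. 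Using the relations $\beta_1+\beta_3=F$ and $\beta_2+\beta_4=S_{-4}+4\beta_1$ in $H_2(\F_4,L)$, I would check that the contributing classes beyond $\beta_3$ organise into three families indexed by $k\geq 0$---namely $(2k+1)\beta_1+kS_{-4}$, $2k\beta_1+\beta_2+kS_{-4}$, and $2k\beta_1+\beta_4+kS_{-4}$---together with the single isolated correction $2\beta_1+\beta_3+S_{-4}$ producing the standalone term $T^A/y$. A direct computation of areas and boundary classes matches these representatives with the monomials in the stated Laurent series; one must also verify that all remaining candidate Maslov--$2$ classes have vanishing open Gromov--Witten invariants.

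Before turning to the counts I would show that no Maslov index $0$ stable disc survives the specified perturbation, so that the mirror space is unambiguously $(\Lambda^*)^2$ and no wall-crossing occurs. Unlike the $\F_3$ analysis of Section 2, in which an explicit wall had to be located, the task for $\F_4$ is to exclude every wall. I would exhibit the perturbation as a toric-type deformation supported along $S_{-4}$ and its neighbouring fibres and argue via an obstruction-bundle calculation that every potential Maslov--$0$ configuration acquires a non-trivial obstruction and is therefore generically empty. The absence of walls is exactly what allows $W$ to take the form of a single Laurent series rather than a chart-gluing.

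The central obstacle is the determination of the integer counts $n_\beta$ and, in particular, the emergence of the coefficients $(2k+1)$ and $(k+1)$ in the formula. For each contributing class containing $k$ copies of $S_{-4}$, the moduli space of stable discs is generally non-regular along the $S_{-4}$-bubble strata, so $n_\beta$ is recovered as an Euler number of an obstruction bundle in the style of \cite{CL14}. I would describe this obstruction bundle in terms of the normal data of $S_{-4}$ inside $\F_4$, reduce the Euler class integral to an intersection computation on a compactified moduli of rational maps into a neighbourhood of $S_{-4}$ (morally a $\P^k$ parametrising the $k$-fold sphere component), and match the resulting intersection number with the combinatorial count of ways a chain of $k$ spheres can be assembled onto the disc component. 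The factor $(2k+1)$ in the ``bottom'' family $(2k+1)\beta_1+kS_{-4}$ should reflect an extra marked intersection point with the fiber divisor, whereas the factor $(k+1)$ in the two ``side'' families should arise from the purely linear combinatorics of the sphere chain attached at a single node. Summing the contributions over the three families produces the geometric series $\sum (2k+1) z^k$ and $\sum (k+1) z^k$ in the variable $z = T^A/y^2$, and combining these with the isolated terms $y$ and $T^A/y$ yields the closed form in the theorem.
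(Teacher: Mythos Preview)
Your plan diverges substantially from the paper's argument, and in its present form it has real gaps.

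The paper does \emph{not} compute $n_\beta$ as an Euler number of an obstruction bundle on a stratum of nodal configurations in $\F_4$. Instead it fixes a very concrete perturbation: the deformation $J_{\F_4}(t)$ is the pullback, via the branched double cover $\Psi:\F_4\to\F_2$ lying over $z\mapsto z^2$, of the standard deformation of $\F_2$ to $\F_0$. The deformed complex manifold is literally $\P^1\times\P^1$, and the product tori in $\F_4$ become explicit Lagrangian tori $T_{r,\lambda}=\{|z_1^2z_2-\epsilon|=r,\ |z_1|^4-|z_2|^2=\lambda\}$. The exclusion of index $0$ discs is then done in two steps: intersection with the deformed divisors $E_0,E_\infty$ forces any index $0$ class to be $m(2\beta_2+\sigma)$, and these are eliminated by a Riemann--Hurwitz count applied to the projection $\bar u=z_1^2z_2$ (Lemma~3.5). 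For the index $2$ counts the same projection $\bar u$ reduces each family to a classification of meromorphic maps $(D^2,\partial D^2)\to(\P^1\setminus\Delta,\partial\Delta)$ with a prescribed pattern of simple and double zeros and poles; existence and uniqueness are established topologically and then upgraded to holomorphic data. The integer $2m+1$ (resp.\ $m+1$) is the intersection number $|\partial u\cdot\partial\alpha_0|$ of the boundary class of the unique $S^1$-family with the orbit direction, not a combinatorial count of sphere chains.

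Your proposal to invoke \cite{CL14} does not transfer: that machinery treats stable discs of the form ``index $2$ disc $\cup$ Chern-zero sphere tree'', whereas here $c_1(S_{-4})=-2$, so the disc component of your configurations has Maslov index $2+4k$ and the relevant moduli are not governed by the semi-Fano obstruction bundle. After the chosen deformation the contributing discs are \emph{smooth and regular}, so there is nothing to integrate an Euler class over; the count must be performed directly in the deformed $\F_0$. Your heuristic for $(2k+1)$ versus $(k+1)$ (``extra marked intersection'' versus ``linear combinatorics of the chain'') is not a computation, and in fact the paper's mechanism is entirely different. Finally, your stated relations $\beta_1+\beta_3=F$ and $\beta_2+\beta_4=S_{-4}+4\beta_1$ are inconsistent with your own monomial assignments $\beta_1\leftrightarrow y$, $\beta_2\leftrightarrow T^B/y$, $\beta_3\leftrightarrow x$, $\beta_4\leftrightarrow T^{A+2B}/(xy^4)$ (the boundary classes do not cancel), so the enumeration of contributing classes would need to be redone before any counting begins.
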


        At the beginning of Section $3$, we  define this specific perturbation of $\F_4$ explicitly (such that the perturbed complex structure is isomorphic to that of $\F_0$). In Subsection $3.2$, we relate the tori in $\F_4$ to a new family of Lagrangian tori in $\F_0\cong \P^1\times\P^1$ given by equations 
        \begin{equation*}
                \left\{
            \begin{array}{ll}
                |z_1^2z_2-\epsilon|=r,\\
                \mu_{S^1}(z_1,z_2)=\lambda/2, \\
            \end{array}
            \right.
        \end{equation*}
        where $\mu_{S^1}$ is the moment map of a certain $S^1$-action. This allows us to reduce to a calculation of the superpotential for these tori in $\F_0$.

        In Section $4$, we consider other perturbations of $\F_4$, and the different SYZ mirrors they induce.
        
        In Subsection $4.1$, we construct a deformation from \( \F_4 \) to \( \F_2 \). (Note that the deformation used in Section $3$ is to $\F_0$.) This deformation is also toric, so we can similarly prove that there are no index $0$ discs under it. We perform a nodal trade on the tori in \( \F_4 \), showing that they are related to the standard tori in \( \F_2 \) via an explicit wall-crossing transformation, thereby obtaining the expression of the superpotential (Proposition \ref{Vianna superpotential}). This expression, which I learned from Vianna, is also present in \cite{BGL25}. The chamber yielding this superpotential was first studied in \cite{CPS24}. 
        
        In Subsection $4.2$, we explain how the two different superpotentials for $\F_4$ we obtained, along with superpotentials for $\F_0$ and $\F_2$, are connected by an infinite sequence of wall-crossing transformations in a scattering diagram. Put differently, the chamber corresponding to Theorem \ref{main theorem 2} lies infinitely far away from the chamber corresponding to Proposition \ref{Vianna superpotential}.
        The scattering diagram is obtained through two nodal trades on the standard torus fibration of $\F_4$. 

        The significant difference between the superpotentials obtained in Theorem \ref{main theorem 2} and Proposition \ref{Vianna superpotential} not only again illustrates, in the non-Fano case, the heavy dependence of the SYZ mirror on the choice of perturbation, but also suggests that a general choice of regularization of moduli spaces from the perspective of complex algebraic geometry can lead to very different results from those arising naturally via tropical methods (as used in \cite{CPS24}, \cite{BGL25}).

        \subsection{Related Work}

        In \cite{DA23}, Auroux demonstrated that SYZ mirror constructions in the non-Fano setting may receive more corrections than previously considered, arising from stable holomorphic discs of non-positive Maslov index produced by sphere bubbling with negative Chern number. The present paper is motivated by this perspective. However, since we work in dimension two, no corrections from negative Maslov index discs appear: for any two-dimensional family of Lagrangian torus fibers, the expected dimension of the moduli space of holomorphic discs of negative index is negative, and hence no regular such moduli spaces exist. In contrast, in the setting of \cite[Theorem 1.1]{DA23}, stable holomorphic discs of non-positive Maslov index are already regular. This difference explains why, in our two-dimensional setting, we observe a new phenomenon that the SYZ mirror depends on the chosen regularizing perturbation.

        Already in \cite{FOOO10}, Fukaya, Oh, Ohta, and Ono suggested that, in the non-Fano setting, the mirror superpotential may take the form of an infinite Laurent series; see \cite[Theorem 4.6]{FOOO10}. Our Theorem \ref{main theorem 2} provides what appears to be the first explicit example of such a superpotential, arising from the fourth Hirzebruch surface. However, in \cite[Lemma 11.7]{FOOO10} the authors employ a $T^n$-equivariant virtual perturbation on moduli spaces of discs with at least one boundary marked point. Under such equivariant perturbations, Maslov $0$ discs do not occur, and the counts of Maslov $2$ discs are independent of the choice of equivariant perturbation.
        
        In contrast, in the present paper we take perturbations arising from (directions of) infinitesimal deformations of the complex structure; these yield compatible perturbations on both disc and sphere moduli spaces, and do not depend on the positions of marked points, unlike the perturbations considered in \cite{FOOO10}. Our perturbations are never $T^n$-equivariant in the sense of \cite{FOOO10}. As a consequence, the resulting disc counts—and hence the mirror—exhibit wall-crossing phenomena that depend on the choice of perturbation.

        A major recent development in mirror symmetry is the intrinsic mirror symmetry program, developed by Gross, Hacking, and Keel \cite{GHK15} and by Gross and Siebert \cite{GS19}, \cite{GS22} and others. As emphasized in \cite{GS22}, the intrinsic mirror is expected to depend only on the divisor complement $X\setminus D$, and from our perspective it is the mirror to $X\setminus D$, rather than to $X$.   However, in the non-Fano toric setting, some essential information needed to recover the corrected mirror to $X$—namely the moduli space of weakly unobstructed objects of the Fukaya category of $X$ supported on product tori in $X\setminus D$—is encoded precisely in the divisor $D$ rather than in the open part $X\setminus D\cong(\C^*)^n $. 
        Once the divisor is included, the scattering diagram associated to $(X,D)$ contains additional walls beyond those arising from $X\setminus D$ alone (Theorem \ref{thm 1.1} and \cite[Theorem 1.1]{DA23}). See the remark at the end of Section $1.1$ of \cite{DA23}. For this reason, we do not expect the intrinsic mirror approach to apply directly to the situation considered here. 

        It is also worth emphasizing that throughout this paper we consider the mirror of the pair $(X,D)$ where $X$ is a toric variety (in our case, a Hirzebruch surface) and $D$ is the union of its toric divisors ($4$ copies of $\P^1$). Although the complex structure of $X$ is deformed (and consequently the divisor $D$ may be partially smoothed) when we regularize the relevant open Gromov-Witten moduli spaces, this deformation is used solely as a perturbation to achieve transversality and determine the Floer-theoretic obstruction for product tori in $X$. We do not attempt to construct the mirror of the deformed pair $(X',D')$. 
        
        In the recent work of Berglund, Gr\"afnitz, and Lathwood \cite{BGL25}, the authors compute the superpotential of $\F_4$ and obtain the expression \eqref{superpotential2}	following the approach of Carl, Pumperla, and Siebert \cite{CPS24}. Their method relates the mirror superpotential of a non-Fano surface $X$ to that of a Fano (or semi-Fano) surface $X'$ by working with the scattering diagram associated to the deformed pair $(X',D')$. In this way, the issue of corrections arising from spheres of negative Chern number is bypassed. This approach, however, does not apply to the situation studied in Section 3 of the present paper. Under the deformation considered there, the divisor $D$ does not survive—$(X,D)$ does not deform to a pair $(X',D')$—and the superpotential \eqref{superpotential1} does not lie in any chamber of the scattering diagram used in \cite{BGL25}. A detailed discussion of the relationship between \eqref{superpotential1} and \eqref{superpotential2} is given in Subsection 4.2.

        Another relevant development is the work \cite{BCHL25} of Bardwell-Evans, Cheung, Hong, and Lin. Using symplectic methods, they construct scattering diagrams on log Calabi-Yau surfaces and recover the diagram of \cite{GHK15}. In the same way as in \cite{GHK15}, however, their scattering diagram is essentially associated to the divisor complement $X\setminus D$; in the non-Fano case, this differs from the scattering diagram associated to the pair $(X,D)$.

    \section*{Acknowledgements}
        I am deeply grateful to my advisor Denis Auroux, for suggesting this problem, for his patient guidance and constant support, as well as for his important ideas and contributions throughout this work. I would also like to thank Renato Vianna for sharing with me his calculation on a superpotential for $\F_4$ and his inspiring ideas, which facilitated the emergence of Section $4$. I would also like to thank Weixiao Lu and Zeyu Wang for helpful discussions on related algebraic geometry. This work was partially supported by NSF grant DMS-2202984.

    \section{SYZ Mirrors of the Third Hirzebruch Surface $\F_3$}

        In this section, we apply the theory of obstruction bundles to investigate the possible appearance of Maslov index $0$ discs and wall-crossing phenomena, and see their dependence on the choice of perturbation of the complex structure. We then determine the superpotential and derive the expression of the mirror space using the variable substitution formula for the superpotential.

        To be consistent in notation, we assign a coordinate $z\in \P^1$ to the exceptional section $S_{-3}$, such that in the moment polytope of $\F_3$ (see Figure \ref{image_2}, where the slanted edge should have slope $-\frac{1}{3}$), $z=0$ corresponds to the leftmost point on the line segment that represents $S_{-3}$, and $z=\infty$ corresponds to the rightmost point. Denote by $F_0$ the fiber sphere over $z=0$, and by $F_\infty$ the fiber sphere over $z=\infty$. Denote by $\sigma,\phi\in \pi_2(\F_3)$ the homotopy classes of the exceptional section $S_{-3}$ and the fiber respectively.

        \begin{figure}[htbp]  
            \centering  
            \includegraphics[width=0.35\textwidth]{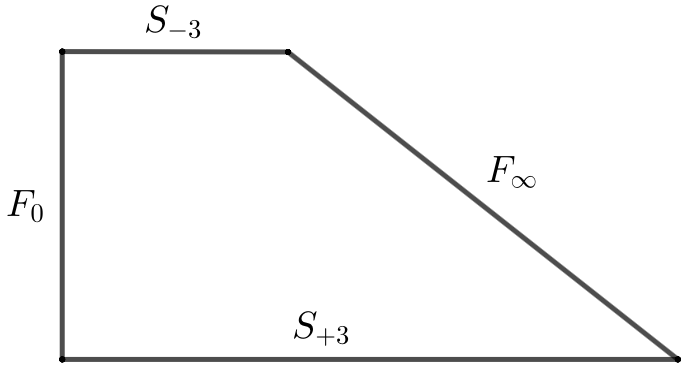}  
            \caption{The $3^{\text{rd}}$ Hirzebruch surface $\F_3$}  
            \label{image_2}  
        \end{figure}

        \subsection{Obstruction Bundle}

        Recall that if we choose the deformation as in \cite[\S 3]{DA09}, the union of the spheres $S_{-3}$ and $F_0$ deforms into a single holomorphic sphere in the class $\sigma+\phi$. Below, we will show by the theory of obstruction sheaf that, if we choose a different deformation, the union of $S_{-3}$ and another fiber will deform into a holomorphic sphere in the class $\sigma+\phi$, and this new sphere is regular.
        
        To begin with, we consider the naive moduli space $\MM(\F_3;\sigma+\phi)$ consisting of nodal spheres $S_{-3}\cup F_z$ where $z\in S_{-3}$ is the node. We identify $\MM(\F_3;\sigma+\phi)$ with $S_{-3}\cong\P^1$. This moduli space is not regular in the sense that the Dolbeault operator $\bpar$ on the normal sheaf of $S_{-3}\cup F_z$ is not surjective (see \cite[\S 2.3]{DA23}).

        \begin{remark}
            As indicated in the notation $\MM(\F_3;\sigma+\phi)$, nodal spheres of the form $S_{-3}\cup F_z$ give all the (possibly nodal) holomorphic spheres in the class $\sigma+\phi$. We will not use this.
        \end{remark}

        Denote this normal sheaf on $S_{-3}\cup F_z$ by $\NN_z$. The deformations and obstructions of $S_{-3}\cup F_z$ as a stable map are governed by $H^0$ and $H^1$ of $\NN_z$, i.e., the kernel and the cokernel of $\bpar$. These cokernels fit together to form a vector bundle over $\MM(\F_3;\sigma+\phi)$, which we call the \textbf{obstruction bundle}, and denote by $\OO b$. The Euler class of the obstruction bundle is often referred to as the \textbf{virtual fundamental class}.

        To define virtual fundamental classes in full generality, one needs to employ the machinery of Kuranishi structures (or other sophisticated methods), of which the obstruction bundles appearing here are special cases. For this reason we will not present the full definition in detail. 

        In the following, we denote $S_{-3}$ by $S$ for simplicity.

        \begin{proposition}\label{Ob for F_3}
            The obstruction bundle $\OO b$ over $\MM(\F_3;\sigma+\phi)\cong\P^1$ is isomorphic to $\OO_{\P^1}(1)$.
        \end{proposition}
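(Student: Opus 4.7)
The plan is to compute $\OO b$ directly from the deformation-obstruction theory of the nodal stable maps $f_z: C_z := S \cup F_z \to \F_3$ parametrized by $z \in \MM(\F_3;\sigma+\phi) \cong \P^1$.

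First I identify the normal sheaf $\NN_z$. Since $S$ and $F_z$ meet transversely at the single node $p_z = (z,0)$, a local computation on $T_{C_z} \to f_z^*T_{\F_3}$ shows that this map is an isomorphism at $p_z$, so that globally $\NN_z \cong \nu_*(N_{S/\F_3} \oplus N_{F_z/\F_3})$, where $\nu: S \sqcup F_z \to C_z$ is the normalization. In particular, $H^1(C_z, \NN_z) \cong H^1(\OO_{\P^1}(-3)) \oplus H^1(\OO_{\P^1}) \cong \C^2$.

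Next I assemble these into bundles on $\MM \cong \P^1$. The sheaf $R^1 p_* \NN_z$ is the trivial rank-$2$ bundle $\OO_{\P^1}^2$, since the $S$-contribution is independent of $z$ and the $F_z$-contribution vanishes. The space of infinitesimal smoothings of the node at $z$ is $T_{p_z}S \otimes T_{p_z} F_z$; using $T_{p_z}S = (TS)|_z$ and $T_{p_z}F_z = (N_{S/\F_3})|_z$ (the vertical tangent bundle along a section is identified with its normal bundle), the resulting smoothing bundle on $\MM = S \cong \P^1$ is $TS \otimes N_{S/\F_3} = \OO(2)\otimes\OO(-3) = \OO_{\P^1}(-1)$.

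Finally I identify $\OO b$ with the cokernel of the smoothing-to-obstruction bundle map $\OO_{\P^1}(-1) \to \OO_{\P^1}^2$ arising in the deformation-obstruction sequence. By Serre duality, this map is dual to the evaluation map $H^0(\P^1, \omega_{\P^1}\otimes N_{S/\F_3}^\vee) \otimes \OO_{\P^1} \to \omega_{\P^1}\otimes N_{S/\F_3}^\vee = \OO_{\P^1}(1)$, which is surjective because $\OO(1)$ is basepoint-free. Hence $\OO_{\P^1}(-1) \hookrightarrow \OO_{\P^1}^2$ is fiberwise injective, and its cokernel is a line bundle of degree $0-(-1)=1$, i.e.\ $\OO b \cong \OO_{\P^1}(1)$.

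The main technical hurdle will be the local computation at the node pinning down $\NN_z$, together with verifying that the smoothing-to-obstruction bundle map is indeed the Serre dual of the evaluation map above (rather than a twisted version). The rest is then a matter of degree-counting on $\P^1$.
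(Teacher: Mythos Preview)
Your approach is correct and genuinely different from the paper's. The paper works with the normal sheaf of the Cartier divisor, $\NN_z=\OO_{\F_3}(S+F_z)|_{S\cup F_z}$, which is a \emph{line bundle} on $C_z$ (restricting to $\OO(-2)$ on $S$ and $\OO(1)$ on $F_z$); its $H^1$ is already the one-dimensional obstruction space, canonically $H^1(S,\OO_S(-3)(z))$, and a relative Serre duality then yields $\OO b^\vee\cong\pi_{0,*}(\pi_1^*\OO(1)(-\Delta))=\OO(-1)$. You instead compute a two-dimensional ``pre-obstruction'' $H^1(S,N_S)=\C^2$ and realize $\OO b$ as the cokernel of the node-smoothing map $TS\otimes N_S=\OO(-1)\to\OO^2$; your Serre-duality identification of this map with the dual of the evaluation on $\omega_S\otimes N_S^\vee=\OO(1)$ is correct, and basepoint-freeness gives the answer. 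The two routes are linked by the short exact sequence $0\to N_S\to N_S(z)\to (T_zS\otimes (N_S)_z)_{\mathrm{sky}}\to 0$ on $S$, whose connecting homomorphism is precisely your smoothing-to-obstruction map. The paper's computation is a bit shorter; yours makes the role of node-smoothing more explicit.

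One correction is needed in your first step: the assertion that $T_{C_z}\to f_z^*T_{\F_3}$ is an isomorphism at $p_z$ is false. For the tangent sheaf of the nodal curve, derivations near $xy=0$ have the form $a\partial_x+b\partial_y$ with $a|_{\{x=0\}}=0$ and $b|_{\{y=0\}}=0$, so the induced map on fibers at $p_z$ is actually zero. Your conclusion that the relevant $H^1$ is $\C^2$ is nevertheless correct---either by a direct stalk computation of the cokernel (which is indeed $\nu_*(N_S\oplus N_F)$), or more simply by observing that $H^1(C_z,f_z^*T_{\F_3})\cong H^1(S,N_S)\oplus H^1(F_z,N_F)$, since the tangential summands $T_S,T_{F_z}$ carry no $H^1$ and the restriction-to-node map on $H^0$ is surjective. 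With this repaired, the remainder of your argument goes through.
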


        \begin{proof}
            We first determine the normal sheaf of each nodal sphere.
            
            \textbf{Claim.} The normal sheaf $\NN_z$ of $S\cup F_z$ in $\F_3$ is a line bundle that restricts to $\OO_{\P^1}(-2)$ and $\OO_{\P^1}(1)$ on $S$ and $F_z$ respectively.

            In fact, the normal sheaf of any effective Cartier divisor $D$ in $\F_3$ is the sheaf $\OO_{\F_3}(D)|_D$. Taking $D$ to be $S+F_z$, the restriction of the normal sheaf to $S$ is
            \begin{align*}
                \OO_{\F_3}(S+F_z)|_S&\cong\OO_{\F_3}(S)|_S\otimes\OO_S(S\cap F_z)\\
                &\cong\NN_S\otimes\OO_{\P^1}(1)\\
                &\cong\OO_{\P^1}(-2).
            \end{align*}
            Similarly, we can obtain the restriction of $\NN_z$ to $F_z$. The claim is proved.

            We next identify the cohomology group $H^1(S\cup F_z,\NN_z)$.

            \textbf{Claim.} The group $H^1(S\cup F_z,\NN_z)$ is canonically isomorphic to $H^1(S,(\OO_S(-3))(z))$.

            In fact, we have a short exact sequence of sheaves on $S\cup F_z$
            \[0\to\NN_z\to i_{*}\NN_z|_S\oplus i'_{*}\NN_z|_{F_z}\to\C_z\to 0.\]Here $i,i'$ are inclusions of two components, and $\C_z$ is the skyscraper sheaf supported on $z$. From the long exact sequence of cohomology, we can see that
            \[H^1(S\cup F_z,\NN_z)\cong H^1(S,\NN_z|_S)\cong H^1(S,\OO_S(-3)\otimes\OO_S(z)).\]The claim is then proved.

            Back to the proposition, by the claim above, the obstruction bundle $\OO b$ is (canonically isomorphic to) a line bundle on $\P^1$ with fiber $H^1(\P^1,\OO_{\P^1}(-3)(z))$ over each point $z\in\P^1$. By a family version of Serre's duality, the dual $\OO b^\vee$ is a line bundle with fiber $H^0(\P^1,\OO_{\P^1}(1)(-z))$. Therefore, \[\OO b^\vee\cong \pi_{0,*}(\pi^*_1(\OO_{\P^1}(1))(-\Delta)),\]where $\pi_0,\pi_1:\P^1\times\P^1\to\P^1$ are the projections to each factor, $\Delta\subset\P^1\times\P^1$ is the diagonal divisor. 
            
            Since $\OO_{\P^1\times\P^1}(\Delta)=\pi_0^*\OO_{\P^1}(1)\otimes\pi_1^*\OO_{\P^1}(1)$, we have $\OO b^\vee\cong\OO_{\P^1}(-1)$. The proposition follows.
        \end{proof} 

        Next, we want to deform the complex structure $J$ on $\F_3$ to regularize the moduli space $\MM(\F_3;\sigma+\phi)$. To do this, we take a transverse section of the obstruction bundle $\OO b$. Its intersection with the zero section is a submanifold of $\MM(\F_3;\sigma+\phi)$ which represents the virtual fundamental class. A key fact is that sections of this sort can be provided by deformations of $J$ (Proposition \ref{deformation gives a section} below).

        Before that, we briefly review the pseudo-holomorphic curve equation and regularity of solutions, as a preparation for the proof of Proposition \ref{deformation gives a section}.

        Consider a smooth map
        \[u:\Sigma\to X\]from a possibly nodal closed Riemann surface $(\Sigma,j)$ to a symplectic manifold with a compatible almost complex structure $(X,J)$. The \textbf{pseudo-holomorphic curve equation} is\[\bpar_{J}u=\frac{1}{2}(du+J\circ du\circ j)=0,\]where $\bpar_J(u)\in\Omega^{0,1}(\Sigma,u^*TX)$. The \textbf{linearized operator}
        \begin{equation}\label{original linearized operator}
            D_{\bpar_J,u}:\Omega^0(\Sigma,u^*TX)\to\Omega^{0,1}(\Sigma,u^*TX)
        \end{equation}
        is a real linear Cauchy-Riemann operator on bundle $u^*TX$ whose principal part is the standard Dolbeault operator $\bpar$. The kernel and cokernel of $D_{\bpar_J,u}$ gives the first-order deformation and obstruction of $u$ (i.e., the tangent space and irregularity of the moduli space). 
        
        In this way, however, when $\Sigma$ is nodal, $u$ is only allowed to deform into another nodal curve, i.e., the domain $\Sigma$ is unchanged. If deforming into a smooth curve is allowed, we need to enlarge the domain of the operator $D_{\bpar_J,u}$ from smooth vector fields along $u$ to vector fields along $u$ which can possibly contain poles at nodes. Assuming for simplicity that the components of $u(\Sigma)$ are immersed and intersect transversely at the nodes, the domain should include meromorphic sections of $u^*TX$ on each component of $\Sigma$, with at most a simple pole at each node, satisfying that the residues from different components coincide at the node.

        Then, we can quotient out the tangent component (which corresponds to reparametrizations) of the above described sections and get the sections of the normal sheaf $\NN_u$.
        In this way, for the moduli space consisting of both smooth curves and nodal curves, the operator
        \[\Tilde{D}_{\bpar_J,u}:\Omega^0(\Sigma,\NN_u)\to\Omega^{0,1}(\Sigma,\NN_u)\]governs the deformation and obstruction. We also see from the first claim in the proof of Proposition \ref{Ob for F_3} that the sections of $\NN_u$ are exactly the sections with poles at nodes whose residues match. See \cite[\S 2.3]{DA23} for a detailed discussion.

        From above, we see that the obstruction bundle is a bundle on the moduli space with fiber
        \[\OO b_u=\Coker\Tilde{D}_{\bpar_J,u}\cong H^1(\Sigma,\NN_u).\]

        Now we return to the case of $\F_3$.

        \begin{proposition}\label{deformation gives a section}
            A deformation $\{J(t),t\in(-\epsilon,\epsilon)\}$ of the complex structure $J=J(0)$ on $\F_3$ gives rise to a holomorphic section $s$ of the obstruction bundle $\OO b\to\MM(\F_3;\sigma+\phi)$.
        \end{proposition}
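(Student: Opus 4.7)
The plan is to construct $s$ by linearizing the perturbed pseudo-holomorphic curve equation along the family $J(t)$ and then passing to Dolbeault cohomology on the normal sheaf. For each nodal sphere $u_z: S\cup F_z\to\F_3$ representing a point of $\MM(\F_3;\sigma+\phi)$, I differentiate $\bpar_{J(t)}(u_z)=0$ formally in $t$ at $t=0$ while holding $u_z$ fixed; this yields
\[\frac{d}{dt}\bigg|_{t=0}\bpar_{J(t)}(u_z)=\frac{1}{2}\dot{J}\circ du_z\circ j,\]
where $\dot{J}=\frac{dJ}{dt}|_{t=0}$. Since $J^2=-1$ forces $\dot{J}J+J\dot{J}=0$, the tensor $\dot{J}$ is $J$-antilinear, so the right-hand side is genuinely a $(0,1)$-form valued in $u_z^*T\F_3$. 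Its class modulo the image of $\Tilde{D}_{\bpar_J,u_z}$ in $\Coker\Tilde{D}_{\bpar_J,u_z}=\OO b_{u_z}$ measures exactly the obstruction to extending $u_z$ to a $J(t)$-holomorphic curve to first order in $t$.

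Explicitly, I would define $s(u_z)\in\OO b_{u_z}$ as the image of $\tfrac{1}{2}\dot{J}\circ du_z\circ j$ under the composition
\[\Omega^{0,1}(\Sigma,u_z^*T\F_3)\to\Omega^{0,1}(\Sigma,\NN_{u_z})\to H^1(\Sigma,\NN_{u_z})\cong\OO b_{u_z},\]
where the first arrow is projection to the normal sheaf (quotienting by the tangential component along $u_z(\Sigma)$), and the second is passage to Dolbeault cohomology. Well-definedness (independence of the parametrization of $u_z$) is immediate since reparametrizations act trivially on the cokernel, and at the node one must observe that the construction automatically respects the residue-matching condition built into the definition of $\NN_{u_z}$, since $\dot{J}\circ du_z\circ j$ is genuinely defined on $\F_3$ along the whole image of $u_z$.

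For holomorphicity over $\MM\cong\P^1$, the key inputs are: the nodal curves $u_z$ vary holomorphically in $z$; the normal sheaves $\NN_{u_z}$ assemble into a holomorphic family on $\F_3\times\P^1$ (this is essentially what underlies the identification $\OO b\cong\OO_{\P^1}(1)$ in Proposition \ref{Ob for F_3}); and $\dot{J}$ can be viewed as a fixed $(0,1)$-form on $\F_3$ valued in $T^{1,0}\F_3$. Pulling back this fixed holomorphic datum along the holomorphic family $u_z$ and then passing to Dolbeault cohomology in $\NN_{u_z}$ is a holomorphic operation, producing a holomorphic section of $\OO b$. The main obstacle I expect is making this last step rigorous; a clean route is to use the Serre-dual description $\OO b^\vee\cong\pi_{0,*}(\pi_1^*\OO_{\P^1}(1)(-\Delta))$ from Proposition \ref{Ob for F_3}, and verify directly that the pairing of $s(z)$ against an arbitrary local holomorphic section of $\OO b^\vee$ depends holomorphically on $z$.
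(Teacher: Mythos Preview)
Your construction of the section $s$ is correct and matches the paper's: you differentiate the pseudo-holomorphic equation in $t$, obtain $\tfrac{1}{2}\dot{J}\circ du_z\circ j\in\Omega^{0,1}(\Sigma,u_z^*T\F_3)$, project to the normal sheaf, and pass to $\Coker\Tilde{D}_{\bpar_J,u_z}\cong H^1(\Sigma,\NN_{u_z})$. This is exactly what the paper does.

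Where you diverge is in the holomorphicity argument, and here the paper's route is both simpler and more complete than what you propose. You argue directly that all the ingredients (the family $u_z$, the normal sheaves, the fixed tensor $\dot{J}$) vary holomorphically, so the output should too; you then admit this is the hard step and suggest checking it via the Serre-dual description of $\OO b$. That is plausible but, as you yourself note, not yet a proof. The paper instead introduces the \emph{trivial} rank-$2$ bundle $\OO b'$ with fiber $H^1(S,\OO_S(-3))$, which governs deformations that keep the curve nodal. The same recipe produces a section $s'$ of $\OO b'$, and the crucial observation is that $s'$ is \emph{constant}: since the cokernel lives entirely on the $S$-component, one has $\mathrm{pr}'(\dot{J}\circ du_z\circ j)=\mathrm{pr}'(\dot{J}\circ di_S\circ j_S)$, and the right-hand side is independent of $z$. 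Then $\OO b$ is a holomorphic quotient of $\OO b'$ (via the natural surjection $H^1(S,\OO_S(-3))\twoheadrightarrow H^1(S,\OO_S(-3)(z))$), and $s$ is the image of the constant section $s'$ under this holomorphic bundle map---hence holomorphic.

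So your proposal is correct in outline but leaves the key analytic step unfinished; the paper's trick of factoring through the constant section of a trivial bundle sidesteps that difficulty entirely and is worth knowing.
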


        \begin{proof}
            Recall that each element in $\MM(\F_3;\sigma+\phi)$ is in fact a map $u:\Sigma\to\F_3$ where $\Sigma$ is the nodal sphere $\P^1\cup\P^1$ and $u$ satisfies the pseudo-holomorphic (in fact holomorphic) curve equation.
            
            To construct a section of $\OO b$ from the deformation $\{J(t)\}$, we consider the infinitesimal deformation $\dot{J}(0)$ and the expression
            \[\dot{J}(0)
        \circ du\circ j\in\Omega^{0,1}(\Sigma,u^*T\F_3).\]
        Its image under the natural map $\Omega^{0,1}(\Sigma,u^*T\F_3)\to\Omega^{0,1}(\Sigma,\NN_u)$, which we denote by $\dot{J}(0)
        \circ du\circ j$ as well, projects to an element $\mathrm{pr}(\dot{J}(0)
        \circ du\circ j)$ in $\mathrm{Coker}\Tilde{D}_{\bpar_J,u}\cong H^1(S,\OO_S(-3)(z))$. In this way, 
        \[s:u\mapsto\mathrm{pr}(\dot{J}(0)
        \circ du\circ j)\]gives the desired section of $\OO b$.

            In the following, we show that the section $s$ is holomorphic. 
            
            Consider another obstruction bundle $\OO b'\to\MM(\F_3;\sigma+\phi)$ which is trivial of rank $2$ with fiber $H^1(S,\OO_S(-3))$. Now $\OO b'$ governs the deformation theory of the nodal spheres in $\MM(\F_3;\sigma+\phi)$ that requires the deformed ones to remain nodal. Same as before, the deformation $\{J(t)\}$ gives a section $s'$ of $\OO b'$\[s':u\mapsto\mathrm{pr}'(\dot{J}(0)\circ du\circ j)\in\Coker D_{\bpar_J,u}.\] Here $D_{\bpar_J,u}$ is the linearized operator \eqref{original linearized operator}, whose cokernel is $H^1(S,\OO_S(-3))$, i.e., the fiber of $\OO b'$. This section $s'$ has to be constant, because we have the identification \[\mathrm{pr}'(\dot{J}(0)\circ du\circ j)=\mathrm{pr}'(\dot{J}(0)\circ di_S\circ j_S).\]Here $i_S$ is the inclusion of the sphere $S$, which can be also seen as a holomorphic map, and the corresponding linearized operator has the same cokernel as that of $D_{\bpar_J,u}$. Since the right hand side does not depend on $u\in\MM(\F_3;\sigma+\phi)$, the section $s'$ of $\OO b'$ is constant.

            Finally, the proposition follows from the observation that $\OO b$ is naturally a holomorphic quotient bundle of $\OO b'$, and $s$ is the image of $s'$ under the quotient map. This observation can be summarized by the following commutative diagram.
            \[\begin{tikzcd}
	{\dot{J}(0)\circ du\circ j} & {\Omega^{0,1}(\Sigma,u^*T\F_3)} & {H^1(\Sigma,\NN_u)} & {H^1(S,\OO_S(-3)(z))} \\
	{\dot{J}(0)\circ di_S\circ j_S} & {\Omega^{0,1}(S,i_S^*T\F_3)} & {H^1(S,\NN_S)} & {H^1(S,\OO_S(-3))}
	\arrow["\in"{description}, draw=none, from=1-1, to=1-2]
	\arrow[maps to, from=1-1, to=2-1]
	\arrow["{\mathrm{pr}}", from=1-2, to=1-3]
	\arrow["{\mathrm{res}}"', from=1-2, to=2-2]
	\arrow["\cong", from=1-3, to=1-4]
	\arrow["\in"{description}, draw=none, from=2-1, to=2-2]
	\arrow["{\mathrm{pr}'}", from=2-2, to=2-3]
	\arrow["\cong", from=2-3, to=2-4]
	\arrow["{\text{(natural surjection)} }"', two heads, from=2-4, to=1-4]
\end{tikzcd}\]
        \end{proof}

        We next characterize the intersection of the above constructed section $s$ with the zero section.

        \begin{proposition}\label{zero set of section}
            The zero set $s^{-1}(0)$ consists of nodal spheres $u\in\MM(\F_3;\sigma+\phi)$ that deform into a holomorphic sphere after deforming $J$.
        \end{proposition}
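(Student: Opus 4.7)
The plan is to establish the equality $s^{-1}(0) = \{u \in \MM(\F_3;\sigma+\phi) : u \text{ deforms into a } J(t)\text{-holomorphic sphere}\}$ via a Kuranishi-type analysis of the parametric pseudo-holomorphic curve equation $\bpar_{J(t)} v = 0$, verifying both inclusions separately.

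For the easy inclusion, that deformability implies $s(u) = 0$: given a family $u_t$ of $J(t)$-holomorphic maps with $u_0 = u$, I would differentiate $\bpar_{J(t)}u_t = 0$ at $t = 0$ to obtain
\[
\Tilde{D}_{\bpar_J,u}(\dot u(0)) + \tfrac{1}{2}\,\dot J(0)\circ du\circ j \;=\; 0
\]
in $\Omega^{0,1}(\Sigma, \NN_u)$, where $\dot u(0)$ is viewed as a section of the normal sheaf $\NN_u$ (allowing a simple pole at the node whose residue records the direction of smoothing the node). Projecting to $\Coker \Tilde{D}_{\bpar_J,u}$ then yields $s(u) = 0$ by the very definition of $s$ in Proposition \ref{deformation gives a section}.

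For the converse inclusion, I would invoke the Kuranishi model of the parametric moduli problem near $(u, 0)$, after enlarging the configuration space with gluing parameters at the node so that both nodal and smoothed nearby maps are represented. A Lyapunov--Schmidt reduction for the section $(v, t) \mapsto \bpar_{J(t)} v$ produces a smooth local map $\kappa: V \times (-\epsilon,\epsilon) \to \OO b|_u$ (with $V$ a neighborhood of $u$) whose zero set corresponds to the local parametric moduli of $J(t)$-holomorphic curves. Since every $v$ close to $u$ in the unperturbed moduli space is already $J(0)$-holomorphic, $\kappa(\cdot, 0) \equiv 0$, and we may write $\kappa(v, t) = t\cdot \tilde\kappa(v, t)$ with $\tilde\kappa(v, 0) = s(v)$ (this identification again comes from the chain-rule computation above matched with the construction of $s$). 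Given $s(u) = 0$ together with the fact that $s$ vanishes transversely at $u$ for a generic perturbation—since $s$ is a holomorphic section of $\OO b \cong \OO_{\P^1}(1)$, its unique zero is simple—the implicit function theorem applied to $\tilde\kappa$ produces a smooth curve $v(t)$ with $\tilde\kappa(v(t), t) = 0$, giving the desired family $u_t = v(t)$ of $J(t)$-holomorphic spheres through $u$.

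The main obstacle is setting up the Kuranishi model rigorously enough to accommodate both strata of deformations simultaneously: those in which the node persists (the deformed curve remains nodal and is modeled by the trivial obstruction bundle $\OO b'$ appearing in the proof of Proposition \ref{deformation gives a section}) and those in which the node is smoothed via a gluing parameter. For this I would refer to the standard gluing framework in Gromov--Witten theory, e.g., \cite{DA23}. A secondary technical point is the identification $\partial_t \kappa|_{t=0} = s$, which follows directly from the same chain-rule computation used in the easy direction.
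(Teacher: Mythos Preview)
Your easy direction matches the paper's exactly. For the converse, your approach via a Kuranishi/Lyapunov--Schmidt reduction over the given one-parameter family $\{J(t)\}$ is correct in spirit and leads to a valid argument, but it differs substantially from the paper's route.

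The paper does not work with the one-parameter family directly. Instead, it enlarges to the two-parameter semiuniversal deformation $\JJ \cong \C^2$ of $\F_3$ and considers the full parametrized moduli space $\MM_\JJ(\F_3;\sigma+\phi)$. The key step is that this space is \emph{regular} along the central fiber $\pi^{-1}(0) \cong \P^1$; regularity is verified by showing that the composition $T_0\JJ \xrightarrow{\cong} H^1(\F_3,T\F_3) \to H^1(\Sigma,\NN_u)$ is surjective. The paper then identifies the normal bundle of $\pi^{-1}(0)$ in $\MM_\JJ$ as $\OO(-1)$ and blows it down, thereby recognizing $\pi$ locally as the blow-up of $\JJ$ at the origin (this is the content of Lemma~\ref{continuing previous proposition}). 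The existence of an actual deformation of $u$ along the chosen path $\{J(t)\}$ then reduces to the path-lifting property of a blow-up. Separately, the paper argues that the deformed sphere must be smooth rather than nodal, using the rank-two trivial bundle $\OO b'$: the section of $\OO b'$ arising from a nontrivial deformation is a nonzero constant, hence has no zeros, so nodal configurations cannot persist.

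Your argument instead relies on transversality of $s$ at its zero---justified since a nonzero holomorphic section of $\OO_{\P^1}(1)$ has a unique simple zero---and applies the implicit function theorem directly to the reduced map $\tilde\kappa$. This is more economical once the gluing Kuranishi model is granted, and it avoids the detour through the larger family $\JJ$. The paper's approach, on the other hand, yields a finer geometric picture (the blow-up structure of $\MM_\JJ \to \JJ$), clarifying why exactly one nodal sphere survives and giving a statement uniform over all paths in $\JJ$ rather than relying on the specific form of $s$. One point you should address explicitly: that the curve $v(t)$ produced by your implicit function theorem is smooth rather than nodal for $t\neq 0$. In your framework this amounts to checking that the gluing parameter is nonzero, which follows from the same $\OO b'$ argument the paper uses.
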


        \begin{proof}
            First, we suppose that the nodal sphere $u$ deforms into a holomorphic sphere after deforming $J$. Let $V\in\Omega^0(\Sigma,\NN_u)$ be the infinitesimal deformation of $u$. By differentiating the equation $\bpar_{J}u=\frac{1}{2}(du+J\circ du\circ j=0)$, we have
            \begin{equation}\label{Coker of D}
                \Tilde{D}_{\bpar_J,u}V+\frac{1}{2}\dot{J}(0)
        \circ du\circ j=0,
            \end{equation}
            which means $s(u)=\mathrm{pr}(\dot{J}(0)
        \circ du\circ j)=0\in\Coker\Tilde{D}_{\bpar_J,u}$.

            Conversely, if $\mathrm{pr}(\dot{J}(0)\circ du\circ j)=0$, there exists $V$ such that equation \eqref{Coker of D} holds. However, $V$ is only a first-order deformation, we cannot guarantee the existence of an actual deformed sphere corresponding to $V$ at the moment. 
            
            We address this problem by considering the parametrized moduli space
            $\MM_\JJ(\F_3;\sigma+\phi)$ which consists of pairs $(u,J')$ that satisfies $\bpar_{J'}u=0$ where $u$ is a map from $\P^1$ or $\P^1\cup\P^1$ to $(\F_3,J')$, in the class $\sigma+\phi$. $J'$ is a compatible complex structure on the symplectic manifold $\F_3$ which takes values in $\JJ$, a two-parameter family of complex structures that gives a semiuniversal deformation described in \cite[\S 2.3]{MM04}. $\JJ$ is semiuniversal in the sense that every infinitesimal deformation of $\F_3$ is obtained by a pullback of the deformation $\JJ$, and the Kodaira-Spencer map associated to $\JJ$ is an isomorphism. Moreover, $\JJ$ can be identified with $\C^2$ as a complex manifold, where the origin corresponds to the complex structure $J$ of $\F_3$.

            \textbf{Claim.} The parametrized moduli space $\MM_\JJ(\F_3;\sigma+\phi)$ is regular at each point $(u,J)$ where $J$ is the complex structure of $\F_3$, and $u\in\MM(\F_3;\sigma+\phi)$.

            In fact, the corresponding linearized operator of $\MM_\JJ(\F_3;\sigma+\phi)$ at $(u,J)$ is\[(V,Y)\mapsto\Tilde{D}_{\bpar_J,u}V+\frac{1}{2}Y\circ du\circ j,\]where $V,Y$ are the infinitesimal deformation of $u,J$ respectively. This operator is surjective if and only if there exists $Y$ such that $\mathrm{pr}(Y\circ du\circ j)\neq0$. This holds indeed because of the surjectivity of the following composition of maps\[\begin{tikzcd}
	{T_0\JJ} & {H^1(\F_3,T\F_3)} & {H^1(S,i^*_ST\F_3)} & {H^1(S,\NN_S)} & {H^1(\Sigma,\NN_u),}
	\arrow["\cong", from=1-1, to=1-2]
	\arrow["\cong", from=1-2, to=1-3]
	\arrow["\cong", from=1-3, to=1-4]
	\arrow[two heads, from=1-4, to=1-5]
\end{tikzcd}\]where the first map is the Kodaira-Spencer map of $\JJ$, and the last surjection appeared in the commutative diagram in Proposition \ref{deformation gives a section}. The composition is exactly given by $Y\mapsto\mathrm{pr}(Y\circ du\circ j)$. The claim is then proved.

            Back to the proposition, if $\mathrm{pr}(\dot{J}(0)\circ du\circ j)=0$, there exists $V$ such that equation \eqref{Coker of D} holds, which means $(V,\dot{J}(0))$ belongs to the tangent space of $\MM_\JJ(\F_3;\sigma+\phi)$. Since $\MM_\JJ(\F_3;\sigma+\phi)$ is regular at $(u,J)$, there exists an actual deformation of complex structure and $u$ realizing $V$. (However, this actual deformation of $J$ does not necessarily coincide with the chosen deformation $J(t)$. The following Lemma \ref{continuing previous proposition} will address this issue and complete the proof of Proposition \ref{zero set of section}.)

        
            This deformed sphere must be smooth. If not, we consider the obstruction bundle $\OO b'\to\MM(\F_3;\sigma+\phi)$ which is trivial of with fiber $H^1(S,\OO_S(-3))$. $\OO b'$ governs the deformation theory of the nodal spheres in $\MM(\F_3;\sigma+\phi)$ that requires the deformed ones to remain nodal. Since non-trivial holomorphic sections of $\OO b'$ have no zeros, we see that the deformed sphere cannot be nodal, and is hence smooth.
        \end{proof}

        In the following Lemma \ref{continuing previous proposition}, we demonstrate further properties of the parametrized moduli space $\MM_\JJ(\F_3;\sigma+\phi)$ and complete the proof of Proposition \ref{zero set of section}. 

        We consider the projection
        \begin{align*}\pi:\MM_\JJ(\F_3;\sigma+\phi)&\to\JJ,\\
            (u',J')&\mapsto J'.
        \end{align*}
        We view the chosen deformation  $\{J(t)\}$ as a path in the family $\JJ\cong\C^2$ so that $J(0)=0$ corresponds to the complex structure $J$ of $\F_3$. In this way, $\MM(\F_3;\sigma+\phi)\cong\P^1$ can be identified with the central fiber $\pi^{-1}(0)$ in $\MM_\JJ(\F_3;\sigma+\phi)$.

        \begin{lemma}\label{continuing previous proposition}
            If $(V,\dot{J}(0))$ belongs to the tangent space $T_{(u,J)}\MM_{\JJ}(\F_3;\sigma+\phi)$, the path $\{J(t)\}$ can be lifted to a path in $\MM_\JJ(\F_3;\sigma+\phi)$ passing through $(u,J)$ in a small neighborhood of $0$.
        \end{lemma}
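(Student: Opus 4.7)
The plan is to combine the smoothness of $\MM_\JJ(\F_3;\sigma+\phi)$ at $(u, J)$ established in the Claim inside Proposition \ref{zero set of section} with the implicit function theorem. Since the combined linearized operator $(V, Y) \mapsto \Tilde{D}_{\bpar_J, u} V + \frac{1}{2} Y \circ du \circ j$ is surjective, the parametrized Cauchy--Riemann equation cuts out $\MM_\JJ$ locally as a smooth finite-dimensional submanifold near $(u, J)$, with tangent space there equal to the kernel of that operator.

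Next, I would analyze the projection $\pi: \MM_\JJ \to \JJ$ at $(u, J)$. Its differential $d\pi_{(u,J)}:(V, Y)\mapsto Y$ has kernel $H^0(\Sigma, \NN_u)$ and image equal to the subspace $\{Y \in T_0 \JJ : \mathrm{pr}(Y \circ du \circ j) = 0\}$. By the constant rank theorem, on a slice $K \subset \MM_\JJ$ transverse to $\ker d\pi$ through $(u, J)$, the restriction $\pi|_K$ is a local diffeomorphism onto a smooth submanifold $Z \subset \JJ$ whose tangent at $0$ equals this image. Given the prescribed path $\{J(t)\} \subset Z$, the curve $\gamma_1(t) := (\pi|_K)^{-1}(J(t))$ lifts $J(t)$ to $\MM_\JJ$ through $(u, J)$; decomposing $V = V_K + V_{H^0}$ with respect to $T_{(u,J)}\MM_\JJ = K \oplus H^0(\Sigma, \NN_u)$, a correction along the $H^0$-kernel direction (which does not affect the projection) produces a lift with the prescribed initial tangent $(V, \dot J(0))$.

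The main obstacle is justifying the containment $\{J(t)\} \subset Z$ beyond first order, since $\pi$ is not a submersion at $(u, J)$ (its image has real codimension $\dim_\R\Coker\Tilde{D}_{\bpar_J,u}$ in $T_0 \JJ$), while the hypothesis $(V, \dot J(0)) \in T_{(u,J)}\MM_\JJ$ only yields tangency of $\{J(t)\}$ to $Z$ at $0$. In the application to Proposition \ref{zero set of section}, the higher-order containment becomes automatic: the deformation path is confined to the locus of complex structures in $\JJ$ over which the stable sphere $S \cup F_z$ (or a nearby smooth sphere) persists, which is precisely $Z = \pi(\MM_\JJ)$ locally. Equivalently, one may reparametrize $\{J(t)\}$ inside $Z$ preserving its first-order data, which does not affect the geometric conclusion drawn from the lift in Proposition \ref{zero set of section}.
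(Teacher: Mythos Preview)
Your argument has a genuine gap at the step you yourself flag, and neither workaround repairs it.

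The differential $d\pi_{(u,J)}$ indeed has complex rank $1$ (kernel $H^0(\Sigma,\NN_u)\cong\C$, image the tautological line $L_u\subset T_0\JJ$). But the rank of $d\pi$ is \emph{not constant} near $(u,J)$: away from the central fiber $\pi^{-1}(0)$, the map $\pi$ is a local isomorphism between two-dimensional complex manifolds, so $d\pi$ has rank $2$ there. Consequently the image $\pi(\text{neighborhood of }(u,J))$ is not a one-dimensional submanifold but an open neighborhood of $0$ in $\JJ$. Your slice $K$ does embed as a curve $Z=\pi(K)$ with $T_0Z=L_u$, but there is no reason for the given path $\{J(t)\}$ to lie in $Z$ beyond first order, and the hypothesis only controls $\dot J(0)$.

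Your workaround (a) conflates $Z=\pi(K)$, which is one-dimensional, with $\pi(\MM_\JJ)$ near $0$, which is open in $\JJ$; the locus ``over which some sphere persists'' is all of $\JJ$ near $0$, not the curve $Z$. Workaround (b) replaces $\{J(t)\}$ by a different path $\{\tilde J(t)\}\subset Z$; the resulting lift produces $\tilde J(t)$-holomorphic spheres, not $J(t)$-holomorphic ones, so it does not yield the conclusion of Proposition~\ref{zero set of section} for the original deformation.

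What the paper does instead is a global identification of $\pi$ near $\pi^{-1}(0)$. Since $\pi^{-1}(0)\cong\P^1$ and the images $L_u$ of $d\pi$ sweep out all directions in $T_0\JJ$ as $u$ varies, the normal bundle of $\pi^{-1}(0)$ in $\MM_\JJ$ is $\OO_{\P^1}(-1)$. One can then blow down $\pi^{-1}(0)$ and check that the induced map to $\JJ$ is a local biholomorphism at the blown-down point; in other words, $\pi$ is locally the blow-up of $\JJ\cong\C^2$ at $0$. The lemma then reduces to the standard path-lifting property of a blow-up: any path $\gamma$ in $\C^2$ with $\gamma(0)=0$ and $\dot\gamma(0)\neq0$ lifts (as its strict transform) to a path through the point $[\dot\gamma(0)]$ of the exceptional divisor. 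This is precisely what handles the higher-order behavior of $\{J(t)\}$ that your local slice argument cannot.
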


        \begin{proof}
            By the claim in Proposition \ref{zero set of section}, $\MM_\JJ(\F_3;\sigma+\phi)$ is regular in a neighborhood of $\pi^{-1}(0)$. Further, $\MM_\JJ(\F_3;\sigma+\phi)$ is a two-dimensional complex manifold in a neighborhood of $\pi^{-1}(0)$ on which the restriction of the projection $\pi$ is holomorphic. (The linearization of the operator that defines $\MM_\JJ(\F_3;\sigma+\phi)$ is complex linear, and the linearization of $\pi$ is complex linear as well. The expected dimension follows from the index formula.) 

            \textbf{Claim.} The normal bundle of $\pi^{-1}(0)\cong\P^1$ in $\MM_\JJ(\F_3;\sigma+\phi)$ is isomorphic to $\OO(-1)$.

            To prove the claim, we consider the following composition of maps\[\begin{tikzcd}
	{T_0 \JJ} & {H^1(\F_3,T\F_3)} & {H^1(S,\OO_S(-3))} & {\Gamma(S,\OO b)} && {S.}
	\arrow["\cong", from=1-1, to=1-2]
	\arrow["\cong", from=1-2, to=1-3]
	\arrow["\cong", from=1-3, to=1-4]
	\arrow["{s\mapsto s^{-1}(0)}", dashed, two heads, from=1-4, to=1-6]
\end{tikzcd}\]
            Here we identify $\MM(\F_3;\sigma+\phi)$ with $S$ via the nodal point of a stable disc. $\Gamma(S,\OO b)$ denotes the holomorphic sections of $\OO b$. The third map is given by identifying $H^1(S,\OO_S(-3))$ with $\Gamma(S,\OO b')$ and the surjective bundle map $\OO b'\to\OO b$. The composition $T_0\JJ\to\Gamma(S,\OO b)$ of the first three maps recovers the construction in Proposition \ref{deformation gives a section}. The last map maps a nonzero section $s$ to its zero. The full composition $T_0\JJ\setminus\{0\}\to S$ can be identified with the standard quotient map $(\C^2)^*\to\P^1$.

            Hence, we have a short exact sequence\[0\to T_u\MM(\F_3;\sigma+\phi)\to T_{(u,J)}\MM_\JJ(\F_3;\sigma+\phi)\xrightarrow{d\pi}L_u\to 0,\]
            where $d\pi$ is the differential of $\pi$, and $L_u$ is the line in $T_0\JJ$ consists of the preimages of $u$ under the map $T_0\JJ\to S$, i.e., the tautological line of $u$. The claim follows. 

            By the claim, we can blow down the exceptional divisor $\pi^{-1}(0)$ and obtain the blow-up
            \[p:\hat{U}\to U.\]Here $\hat{U}$ is a tubular neighborhood of $\pi^{-1}(0)$ in $\MM_\JJ(\F_3;\sigma+\phi)$. $U$ is an open subset in $\C^2$ which contains $0$, such that $p^{-1}(0)=\pi^{-1}(0)$. 

            Taking the inverse of $p$ on the complement of $p^{-1}(0)$, we obtain a map $\pi\circ p^{-1}:U\setminus \{0\}\to \JJ$, which extends to a map $f:U\to\JJ$ sending $0$ to $0$ by Hartogs's theorem (this is essentially proving the universal property of the  blow-down, see for example \cite[\S 2.13]{AlgebraicSurfaces}).  One checks that the differential $df$ is surjective at $0$ using the information of $d\pi$ and $dp$. Thus, $f$ is a biholomorphism in a neighborhood of $0$. We have identified the map $\pi$ locally as the blow-up of $\JJ$ at $0$.

            The lemma now follows from a path-lifting property of the blow-up of $\C^2$.
        \end{proof}

        By Lemma \ref{continuing previous proposition} above, we see that if the stable sphere $u$ admits a first-order deformations $V$ in the family of complex structures $\{J(t)\}$, then $u$ admits actual deformations in this family, since the actual deformations correspond to elements in $\MM_\JJ(\F_3;\sigma+\phi)$. Hence, the proof of Proposition \ref{zero set of section} is now complete.

        By Proposition \ref{zero set of section} and Proposition \ref{Ob for F_3}, we have shown that, by a generic deformation of complex structure, there exists a unique fiber $F_z$ such that $S\cup F_z$ deforms into a holomorphic sphere $E$ in the class $\sigma+\phi$. $E$ is regular since its normal bundle is $\OO_E(-1)$.
        
        \subsection{Maslov Index $0$ Holomorphic Discs and Walls}

        Recall that in a (special) Lagrangian torus fibration, the walls consist of Lagrangian tori that bound regular index $0$ discs. (When speaking of an index $0$ disc, we always assume it is non-constant.) In this subsection, we investigate the existence of possible index $0$ discs and walls after a perturbation (small deformation) of $J$. 

        Fix a product torus $L$ in $\F_3$ (a non-degenerate orbit of the torus action on $\F_3$).  $L$ is a Lagrangian torus.  The homotopy group $\pi_2(\F_3,L)$ is generated by classes $\beta_1,\beta_2,\sigma,\phi$. Here $\beta_1,\beta_2$ are the basic classes indicated in Figure \ref{image_3}.

        \begin{figure}[htbp]  
            \centering  
            \includegraphics[width=0.35\textwidth]{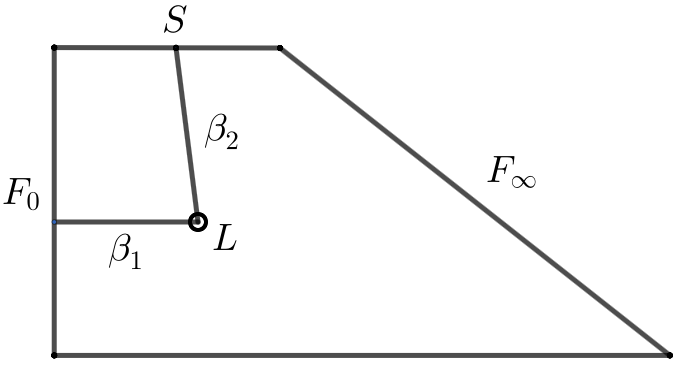}  
            \caption{Disc classes in $\pi_2(\F_3,L)$}  
            \label{image_3}  
        \end{figure}

        When $J$ is unperturbed, there are already some candidates for index $0$ discs with boundary on $L$: discs in the class $\beta_2+\sigma$, $\beta_2+3\sigma+\phi$ and so on. These are irregular stable discs, but could possibly become regular (as discs with boundary on a one-parameter family of product tori) after perturbing $J$. 
        
        We first note that among these candidates, there is one stable disc deforming into a regular holomorphic disc indeed, which we explain in the following. 

        For a fixed torus $L$, we have an $S^1$-family of standard holomorphic discs in the basic class $\beta_2$, each is a portion of a fiber sphere that intersects with $L$ in a circle, and is actually the upper part delimited by $L$ of that fiber sphere.
        
        Now, we consider a family of stable discs: each is the union of a standard disc in the class $\beta_2$ and the sphere $S=S_{-3}$. Then, we let the torus $L$ vary from $F_0$ to $F_\infty$, forming a family $\Tilde{L}$. (In the moment polytope picture, these tori form a curve from the left boundary to the right boundary.) The nodal discs described above with boundary on one of the tori in $\Tilde{L}$ form a family, parameterized by the nodal point $z\in S\setminus\{0,\infty\}$. In other words, the corresponding moduli space $\MM(\F_3,\Tilde{L};\beta_2+\sigma)$ is $\P^1\setminus\{0,\infty\}$, identified with an open subset of $\MM(\F_3;\phi+\sigma)\cong\P^1$.

        The following proposition is inspired by the work of Chan \cite{Chan11} and shows that some of the results in \cite{Chan11} extend beyond the semi-Fano setting.

        \begin{proposition}
        $\MM(\F_3,\Tilde{L};\beta_2+\sigma)$ has the same obstruction bundle as $\MM(\F_3;\phi+\sigma)$. The sections given by the perturbation of $J$ (Proposition \ref{deformation gives a section}) are the same as well.
        \end{proposition}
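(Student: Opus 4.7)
The plan is to show that over each point $z\in\P^1\setminus\{0,\infty\}$ the obstruction space of the stable disc $D\cup S$ is canonically isomorphic to that of the nodal sphere $F_z\cup S$, with the isomorphism varying holomorphically in $z$, and that the sections built from $\dot{J}(0)$ agree under this identification. The underlying reason is that the sphere component $S$ is the same in both settings and the disc $D$ (being the upper half of $F_z$) is unobstructed and contributes nothing to $H^1$.

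First I would identify the restrictions of the normal sheaf $\NN_{D\cup S}$, using the ``meromorphic sections with matching residues at the node'' description recalled before Proposition \ref{deformation gives a section}. The restriction to $S$ is again $\NN_S\otimes\OO_S(z)\cong\OO_S(-3)(z)$, exactly as in the proof of Proposition \ref{Ob for F_3}. The restriction to $D$ is $\NN_D(z)$, where $\NN_D$ is the rank one normal bundle of $D$ in $\F_3$ with its Lagrangian boundary condition along $\partial D\subset L$, and $(z)$ denotes the twist by the interior node.

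Next I would verify that $H^1(D,\NN_D(z))=0$ with the Lagrangian boundary condition. Since $D$ is half of the fiber $F_z$ and $\NN_{F_z/\F_3}\cong\OO_{\P^1}$ is trivial, $\NN_D$ is a trivial complex line bundle over the disc; its Lagrangian boundary condition has Maslov index $0$ (so that, including the universal contribution $2$ from $T_D$, the total Maslov index of $\beta_2$ equals $2$), and the twist $\OO_D(z)$ raises the Maslov index of the normal bundle pair to $2$. By Riemann--Roch for bundle pairs on the disc and the standard vanishing of $H^1$ for line bundle pairs of nonnegative Maslov index, $H^1$ indeed vanishes. Then, from the short exact sequence
\[0\to\NN_{D\cup S}\to\NN_{D\cup S}|_D\oplus\NN_{D\cup S}|_S\to\C_z\to 0,\]
together with the surjectivity of $H^0(D,\NN_D(z))\oplus H^0(S,\OO_S(-3)(z))\to\C_z$ (which is already achieved by the disc summand since $H^0(D,\NN_D(z))$ contains sections of nonzero residue at $z$), the long exact sequence yields
\[H^1(D\cup S,\NN_{D\cup S})\cong H^1(S,\OO_S(-3)(z)),\]
canonically identifying the fiber at $z$ of the stable disc obstruction bundle with the fiber at $z$ of $\OO b$ from Proposition \ref{Ob for F_3}. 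Since the identification is canonical and the short exact sequence varies holomorphically in $z$, the two obstruction bundles agree on $\P^1\setminus\{0,\infty\}$.

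Finally, for the sections, both are defined by $s(u)=\mathrm{pr}(\dot{J}(0)\circ du\circ j)$. Under the canonical identification of the obstruction space with $H^1(S,\OO_S(-3)(z))$, the projection only records the behavior on the $S$-component, where $u|_S=i_S$ in both cases. Thus, by the same commutative diagram as in the proof of Proposition \ref{deformation gives a section}, both sections equal the image of $\dot{J}(0)\circ di_S\circ j_S$ in $H^1(S,\OO_S(-3)(z))$, and hence coincide. The main obstacle in the plan is verifying the Maslov-index computation and the consequent vanishing $H^1(D,\NN_D(z))=0$; the remainder parallels the proofs of Propositions \ref{Ob for F_3} and \ref{deformation gives a section}.
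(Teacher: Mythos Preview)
Your proposal is correct and follows essentially the same approach as the paper: identify the restrictions of the normal sheaf to the two components, observe that the disc component is unobstructed (the paper simply notes the bundle pair is trivial with a trivial real boundary condition and a pole allowed at the node, while you carry out the Maslov index computation explicitly), and conclude that the obstruction is canonically $H^1(S,\OO_S(-3)(z))$; for the sections, both the paper and you reduce to the $S$-component via the diagram from Proposition \ref{deformation gives a section}. Your write-up is more detailed than the paper's terse argument, but the strategy is identical.
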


        \begin{proof}
            One can find the obstruction sheaf by the same steps as in Proposition \ref{Ob for F_3}. For a stable disc in $\MM(\F_3,\Tilde{L};\beta_2+\sigma)$, its normal sheaf restricts to $\OO_S(-3)(z)$ on $S$, and sections of the normal sheaf over the disc component are sections of the trivial line bundle that are allowed to have a pole at the node. The domain of the operator $\bpar$ consists of sections of the normal sheaf whose restriction on the boundary of the disc component lies in a trivial real line subbundle (given by the tangent spaces of the Lagrangian torus). 
            
            Thus, the operator $\bpar$ is surjective on the disc component. The cokernel of $\bpar$ on the stable disc is canonically isomorphic to $H^1(S,\OO_S(-3)(z))$, which coincides with the case of $\MM(\F_3;\phi+\sigma)$. Hence, their obstruction bundles coincide.

            By going over the construction in Proposition \ref{deformation gives a section}, one can similarly show that the sections given by the perturbation are the same as well.
        \end{proof}
        
        Therefore, assuming that the zero of the section $s$ of the obstruction bundle $\OO b$ is not $0$ or $\infty$, with boundary on this family $\Tilde{L}$ of product tori, there is a unique disc in the class $\beta_2$ whose union with the sphere $S$ deforms into an index $0$ disc in the class $\beta_2+\sigma$ after perturbation. This disc in the class $\beta_2$ is exactly the upper part of the sphere $F_z$ we mentioned at the end of the previous subsection (after Lemma \ref{continuing previous proposition}).

        \begin{remark}
            In \cite[\S 11]{FOOO10}, Fukaya, Oh, Ohta, and Ono show that, after choosing a perturbation equivariant with respect to the torus action, there are no index $0$ discs. This does not contradict the result here since we allow non-equivariant perturbations; in fact, perturbations arising from deformations of the complex structure are never $T^2$-equivariant. Certain $S^1$-equivariant perturbations (such as the one in \cite[\S 3]{DA09}), which we  refer to as toric perturbations, correspond to the cases where the zero of the section $s$ of the obstruction bundle is $0$ or $\infty$. 
        \end{remark}

        We next use an argument of intersection numbers to exclude the product tori away from $F_z$ from bounding index $0$ discs after perturbation.  

        \begin{proposition}\label{wall of F_3}
            If the torus $L$ is disjoint from the fiber $F_z$, it cannot bound any index $0$ discs after perturbation (a sufficiently small deformation along the chosen family $\{J(t)\}$).
        \end{proposition}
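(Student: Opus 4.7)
The plan is to argue by contradiction via positivity of intersections against the smooth holomorphic sphere $E_t$ of class $\sigma+\phi$ produced in the previous subsection. Since $L$ is a product torus it is disjoint from $S=S_{-3}$, and by hypothesis $L\cap F_z=\emptyset$, so $L$ misses the nodal limit $S\cup F_z$ of $E_t$. For sufficiently small $t$, $E_t$ lies in any preassigned tubular neighborhood of $S\cup F_z$ and is therefore disjoint from $L$; any Maslov index $0$ $J(t)$-holomorphic disc $D_t$ bounding $L$ must then satisfy $D_t\cdot E_t\ge 0$ by positivity of intersections. The goal is to show that the class pairing $[D_t]\cdot[E_t]$ is in fact strictly negative, yielding the desired contradiction.

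For this pairing I would use the nodal representative $S\cup F_z$ of the class $\sigma+\phi$: since the smoothing $S\cup F_z\rightsquigarrow E_t$ is localized near the node $z^*\in S$, and $L$ is disjoint from $S\cup F_z$, the smoothing takes place in $\F_3\setminus L$, so $[E_t]=[S\cup F_z]$ in $H_2(\F_3\setminus L)$ and the two pairings coincide. By Gromov compactness $D_t$ degenerates as $t\to 0$ to a stable disc $D_0$ at $J$; combining Cho--Oh's description of holomorphic discs on a product torus in a toric manifold with the effectivity of sphere classes in $\F_3$, we may write $[D_t]=[D_0]=\sum_\rho n_\rho\beta_\rho+c\sigma+d\phi$, where $\rho$ ranges over the four toric edges of $\F_3$ and $\beta_\rho$ denotes the associated basic disc class, with all $n_\rho,c,d\ge 0$ and at least one strictly positive.

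A direct geometric computation then gives: only the basic disc towards $S_{-3}$ meets $S$ (in one point); the two basic discs lying in the fiber through $L$ (namely those towards $S_{\pm 3}$) miss $F_z$, since the fiber through $L$ is distinct from $F_z$; and the section through $L$ meets $F_z$ in one point belonging to exactly one of the two basic discs towards $F_0,F_\infty$, according to which side of $L$ the fiber $F_z$ lies in the polytope. Using also $\sigma^2=-3,\ \sigma\cdot\phi=1,\ \phi^2=0$, this gives an expression $[D_t]\cdot[E_t]=n_{S_{-3}}+n_{F_\bullet}-2c+d$ with $\bullet\in\{0,\infty\}$ the appropriate side. The Maslov index $0$ condition, using $c_1(\F_3)\cdot\sigma=-1,\ c_1(\F_3)\cdot\phi=2$, becomes $\sum_\rho n_\rho=c-2d$; substituting yields $[D_t]\cdot[E_t]=-c-d-n_{S_{+3}}-n_{F_{\text{other}}}\le 0$, with equality forcing every coefficient to vanish, contradicting nontriviality. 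Hence $[D_t]\cdot[E_t]<0$, as needed.

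The main obstacle will be the justification of the identification $[E_t]=[S\cup F_z]$ in $H_2(\F_3\setminus L)$, so that the class pairing with $[D_t]$ can be legitimately computed using the nodal representative: this requires verifying that the deformation between the two cycles is supported in $\F_3\setminus L$, ultimately resting on the hypothesis $L\cap F_z=\emptyset$ together with $L\cap S=\emptyset$. A secondary point is ensuring via Cho--Oh and effectivity that the Gromov limit $D_0$ has the claimed decomposition with all coefficients nonnegative.
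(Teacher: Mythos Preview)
Your argument is correct and rests on the same two ingredients as the paper's proof: positivity of intersection with the regular $(-1)$-sphere $E$ in class $\sigma+\phi$, and Gromov compactness back to the toric complex structure. The organization, however, is inverted. The paper first applies positivity of the perturbed disc against the four divisors $F_0,F_\infty,S_{+3},E$ to pin down the class to $m(\beta_1-\beta_2)$, and only then invokes Gromov compactness to rule out that residual class (a stable disc in $m(\beta_1-\beta_2)$ would have to contain $S_{-3}$, contradicting its zero intersection with $F_\infty$). You instead pass to the Gromov limit first and use Cho--Oh plus effectivity of the Mori cone to obtain nonnegative coefficients $n_\rho,c,d\ge 0$, after which a single positivity constraint against $E_t\sim S\cup F_z$ forces the class to be trivial. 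Your route is a bit more economical for this proposition; the paper's route has the advantage of isolating the residual classes explicitly, which is the template reused in the next proposition (on the wall itself) where one replaces $E$ by the spheres $E_{0,0}$ and $E_{\infty,\infty}$.

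One small point of phrasing: your argument is really a sequence argument (if index $0$ discs existed for arbitrarily small $t$, extract a Gromov limit $D_0$), so the Cho--Oh decomposition is applied to $[D_0]$ rather than to $D_t$ directly; you should make that explicit. Your justification that $[E_t]=[S\cup F_z]$ in $H_2(\F_3\setminus L)$ is exactly right and is the place where the hypothesis $L\cap F_z=\emptyset$ enters.
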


        \begin{proof}
            Assume $L$ lies on the right side of $F_z$ in the moment polytope picture. Suppose $L$ bounds an index $0$ stable disc in the class $m_1\beta_1+m_2\beta_2+m_3\sigma+m_4\phi$, and this stable disc deforms into a holomorphic disc after perturbation. By positivity of intersection, it should have non-negative intersection with the spheres $F_0,F_\infty,S_{+3},E$. By a computation of intersection numbers, we have
            \begin{align*}
                m_1+m_3&\geq0,\\
                m_3&\geq0,\\
                m_4&\geq0,\\
                m_1+m_2-2m_3+m_4&\geq0,\\
                m_1+m_2-m_3+2m_4&=0,
            \end{align*}
            where the last equality comes from the Maslov index condition.

            From these relations, the only possible classes are $m\beta_1-m\beta_2$, $m\in\Z_{>0}$.  If there exists a $J(t_n)$-holomorphic disc in the class $m\beta_1-m\beta_2$ for each $t_n$ in a sequence $\{t_n\}$ that tends to $0$, then by the Gromov compactness, these discs converge to a stable disc in the same class in $\F_3$.

            However, stable discs in the class $m\beta_1-m\beta_2$ do not exist in $\F_3$: since $m\beta_1-m\beta_2$ has negative intersection with $S$, it must contain at least one copy of $S$, which contradicts the fact that its intersection with $F_\infty$ is $0$. Hence, by taking a sufficiently small deformation along the family $\{J(t)\}$, there are no index $0$ discs.

            The proof for the case where $L$ lies on the left side of $F_z$ is similar.
        \end{proof}

        From this proposition, we can conclude that when the zero $z$ of the section $s$ of the obstruction bundle is not $0$ or $\infty$, the wall consists of Lagrangian tori that intersect the sphere $F_z$. In the moment polytope, the wall is exactly the line segment that represents $F_z$. The peculiarity of this example lies in the fact that the wall does not arise from or pass through a singular Lagrangian fiber, as in previously discovered examples.

        On the other hand, when $z$ equals $0$ or $\infty$, there are no walls. 

        To conclude this subsection, we take one step further to prove that the classes of index $0$ discs that may appear on the wall can only be multiples of $\beta_2+\sigma$ (where these discs might arise from multiple covers of the deforming disc in class $\beta_2+\sigma$).

        \begin{proposition}\label{index 0 discs for F_3}
            If z is not $0$ or $\infty$ and the torus $L$ intersects $F_z$, the only classes that may contain any index $0$ discs after perturbation are $m(\beta_2+\sigma)$ for positive integers $m$.
        \end{proposition}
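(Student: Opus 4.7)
The plan is to parallel the approach of Proposition~\ref{wall of F_3} by combining positivity of intersection with the available $J$-holomorphic spheres and the Maslov index zero constraint, but now with the added subtlety that the sphere $E$ meets $L$ in a circle and so positivity with $E$ must be handled more carefully.

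First, since $F_0$, $F_\infty$, and $S_{+3}$ remain disjoint from $L$ even when $L$ is on the wall, positivity of intersection yields $m_1+m_3\geq 0$, $m_3\geq 0$, and $m_4\geq 0$; together with the Maslov index zero relation $m_1+m_2-m_3+2m_4=0$, these constraints alone are insufficient to pin down the class. The circle $\gamma := L\cap E$, which has class $(0,-1)\in H_1(L)$, divides $E$ into two discs: $D_1$ of class $\beta_2+\sigma$ (Maslov $0$) and $D_2$ of class $\phi-\beta_2$ (Maslov $2$). I would then split into two cases according to whether the image of $u$ is or is not contained in $E$.

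If the image of $u$ is contained in $E$, then $u$ factors as a holomorphic map $(D^2,\partial)\to(E,\gamma)\cong(\P^1,S^1)$, which is a Blaschke product and hence a branched cover of either $D_1$ or $D_2$; the Maslov zero condition excludes covers of $D_2$, forcing $[u]=m(\beta_2+\sigma)$ for some $m\geq 1$. If the image of $u$ is not contained in $E$, then positivity of interior intersections gives $u\cdot_{\mathrm{int}}E\geq 0$. Perturbing $\partial u$ within $L$ slightly off $\gamma$ on either side relates this interior count to the homological pairing $\beta\cdot[E]$ computed in the two chambers adjacent to the wall; the two chamber expressions are $m_1+m_2-2m_3+m_4$ and $m_2-2m_3+m_4$, differing by $m_1$ (the algebraic intersection of $\partial u$ with $\gamma$ in $L$). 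The interior count equals the smaller of the two, while the larger exceeds it by the boundary-crossing contribution, so both chamber expressions are $\geq 0$. Substituting the Maslov zero relation into these two inequalities yields $m_3+m_4\leq 0$ and $m_1+m_3+m_4\leq 0$; combined with the earlier positivity constraints, these force $m_i=0$ for all $i$, so the class is trivial. Therefore only the case $u\subset E$ contributes nontrivial classes, yielding $[u]=m(\beta_2+\sigma)$.

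The main obstacle is the careful geometric accounting in the ``both chamber inequalities'' step: one must justify that the interior intersection count $u\cdot_{\mathrm{int}}E$ indeed equals the smaller chamber expression, with the boundary crossings of $\partial u$ against $\gamma$ contributing the $m_1$ discrepancy with signs consistent with positivity. A more robust alternative is to establish only one chamber inequality directly and then use Gromov compactness (as in Proposition~\ref{wall of F_3}) to rule out the two one-parameter families of residual classes thus obtained, namely $m(\beta_1-\beta_2)$ and $m(-\beta_1+2\beta_2+\sigma)$: expanding in the basic-class basis of $H_2(\F_3,L)$ shows the coefficient on $\beta_2$ becomes negative, which no non-negative combination of basic disc components and $J_0$-holomorphic sphere components (with sphere classes in the effective cone generated by $\sigma$ and $\phi$) can achieve.
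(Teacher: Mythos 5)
Your proposal takes a genuinely different route from the paper, and the route has a gap at the key step. The paper sidesteps the entire difficulty of boundary intersections with $E$: it introduces auxiliary $J_0$-holomorphic spheres $E_{z_1,z_2}$ (deformed from the regular nodal configurations $F_{z_1}+F_{z_2}+S_{-3}$), and specifically uses $E_{0,0}$ and $E_{\infty,\infty}$. Because these spheres stay near $2F_0+S_{-3}$ and $2F_\infty+S_{-3}$, they remain disjoint from any $L$ on the wall, so positivity of intersections applies cleanly and gives the inequalities $2m_1+m_2-m_3+m_4\geq 0$ and $m_2-m_3+m_4\geq 0$, which together with $m_1+m_3\geq 0$, $m_3\geq 0$, $m_4\geq 0$, and $\mu=0$ force $m_1=m_4=0$, $m_2=m_3>0$.

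Your Case~1 ($u\subset E$) is a nice observation that the paper does not use, and it is correct: a holomorphic disc $u:(D^2,\partial D^2)\to(E,\gamma)$ is a Blaschke product, so a branched cover of $D_1$ or $D_2$, and $\mu=0$ forces a cover of $D_1$. But Case~2 is where the gap lies. The assertion that ``the interior count equals the smaller of the two [chamber expressions], while the larger exceeds it by the boundary-crossing contribution, so both chamber expressions are $\geq 0$'' is precisely the delicate claim one would need to prove: that each boundary point of $u^{-1}(E)\cap\partial D^2$ contributes \emph{nonnegatively} to \emph{both} chamber counts. This requires a sign and multiplicity analysis (e.g., via Schwarz reflection along $L$ near $\gamma$) that you assert rather than establish, and it is exactly the ambiguity the paper's choice of auxiliary spheres is designed to avoid. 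Your ``more robust alternative'' inherits the same issue: establishing even ``one chamber inequality directly'' for $L$ on the wall still requires converting interior positivity into a chamber-specific inequality with $E$. Moreover, the concluding sentence about ``the coefficient on $\beta_2$ becomes negative'' does not straightforwardly apply to the residual family $m(-\beta_1+2\beta_2+\sigma)$ (whose $\beta_2$-coefficient is $+2m$); ruling that family out requires an intersection argument with $S_{-3}$ and then $F_0$ (parallel to the paper's treatment of $m(\beta_1-\beta_2)$ in Proposition~\ref{wall of F_3}), which you should spell out. In short, the strategy is salvageable, but as written the crucial positivity-at-the-wall step is unjustified, and the paper's substitution of $E$ by $E_{0,0}$ and $E_{\infty,\infty}$ is both simpler and fully rigorous.
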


        \begin{proof}
            When $L$ intersects the fiber $F_z$ ($L$ lies on the wall), it is difficult to define a reasonable intersection number of discs it bounds with the sphere $E$. Hence, we need to replace $E$ with other holomorphic spheres to apply the intersection number argument.

            \textbf{Claim.} For any $z_1,z_2\in\P^1$, the spheres $F_{z_1}$, $F_{z_2}$ and $S$ deform into a holomorphic sphere, which we denote by $E_{z_1,z_2}$.

            In fact, the stable sphere $F_{z_1}+F_{z_2}+S$ is regular—the
            $H^1$ of its normal sheaf is zero (which one can show by the same method as in Proposition \ref{Ob for F_3}). The claim follows.

            Suppose $L$ bounds an index $0$ stable disc in the class $m_1\beta_1+m_2\beta_2+m_3\sigma+m_4\phi$, and this stable disc deforms into a holomorphic disc after perturbation. It should have non-negative intersection with the spheres $F_0,F_\infty,S_{+3},E_{0,0},E_{\infty,\infty}$. We hence have
            \begin{align*}
                m_1+m_3&\geq0,\\
                m_3&\geq0,\\
                m_4&\geq0,\\
                2m_1+m_2-m_3+m_4&\geq0,\\
                m_2-m_3+m_4&\geq0,\\
                m_1+m_2-m_3+2m_4&=0.
            \end{align*}
            From these relations, the only possible classes are $m\beta_2+m\sigma$, $m\in\Z_{>0}$. 
        \end{proof}

        \subsection{Maslov Index $2$ Holomorphic Discs and Superpotential}
        
        In this subsection, we determine the superpotential on both sides of the wall. These two expressions of the superpotential are related by a change of variables. Since the superpotential is an analytic function on the mirror space, the change of variables then provides a gluing formula for affine charts of the mirror space. (See \cite[\S 3]{DA07} or \cite[\S A]{AAK16}.)

        In this subsection, we fix a generic perturbation of $J$. As before, there exists a unique fiber $F_z$ such that $S\cup F_z$ deforms into a holomorphic sphere $E$. We assume throughout the subsection that the $z$ does not equal $0$ or $\infty$.

        For a product torus $L$ in $\F_3$, we first find all possible classes in $\pi_2(\F_3,L)$ that may contribute to the superpotential. Recall that the superpotential is a weighted count of regular index $2$ discs that $L$ bounds.

        \begin{proposition}
            If the torus $L$ lies on the right side of $F_z$, the only possible classes that may contain index $2$ discs after perturbation are the basic classes $\beta_1,\beta_2,-\beta_2+\phi,-\beta_1+3\beta_2+\sigma$ and extra classes $2\beta_2+\sigma,\beta_1+\beta_2+\sigma$.
        \end{proposition}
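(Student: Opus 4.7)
The plan is to follow the strategy of Proposition~\ref{wall of F_3} with the Maslov index $2$ condition in place of $0$. First I impose positivity of intersection with the four $J(t)$-holomorphic curves $F_0$, $F_\infty$, $S_{+3}$, $E$ of $\F_3$, each disjoint from $L$ since $L$ lies on the right side of $F_z$. Using the intersection numbers already computed in the proof of Proposition~\ref{wall of F_3}, these give
\[
m_1+m_3\geq 0,\quad m_3\geq 0,\quad m_4\geq 0,\quad m_1+m_2-2m_3+m_4\geq 0,
\]
together with the Maslov equation $m_1+m_2-m_3+2m_4=1$. Substituting the Maslov equation into the last inequality yields $m_3+m_4\leq 1$, so $(m_3,m_4)\in\{(0,0),(1,0),(0,1)\}$.

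Next I apply Gromov compactness: as $t\to 0$, any $J(t)$-holomorphic disc of Maslov $2$ in class $\alpha$ degenerates to a stable $J$-holomorphic disc in $\F_3$ in the same class. Such a stable disc decomposes as $\sum_i k_i\beta_i+\xi$, where the $k_i\geq 0$ are multiplicities of the four basic disc classes $\beta_1$, $\beta_2$, $\beta_r=-\beta_1+3\beta_2+\sigma$ (associated to $F_\infty$), and $\beta_t=-\beta_2+\phi$ (associated to $S_{+3}$), and $\xi$ is a non-negative combination of irreducible sphere classes of $\F_3$. Because $c_1(\sigma+k\phi)=2k-1\geq 5$ for any $k\geq 3$, the Maslov constraint $\sum_i k_i+c_1(\xi)=1$ forbids any sphere component in class $\sigma+k\phi$ with $k\geq 3$, leaving $\xi=n_\sigma\sigma+n_\phi\phi$ with $n_\sigma,n_\phi\geq 0$. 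A direct case-by-case enumeration over the three cases for $(m_3,m_4)$ produces exactly seven candidate classes: the six listed in the statement together with the extra candidate $2\beta_1+\sigma$.

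The main obstacle is to exclude the spurious candidate $2\beta_1+\sigma$. Its only stable-disc decomposition is $2\beta_1+[S_{-3}]$: two basic discs for $F_0$ together with the exceptional section. Geometrically, $\beta_1$ is half of a holomorphic rational curve of $\F_3$ whose moment-polytope image is the horizontal segment from $L$'s image to the $F_0$-edge; its image is therefore disjoint from $S_{-3}$, which is represented by a different edge of the polytope. Hence no point of $\F_3$ can serve as a node joining $S_{-3}$ to the disc components, and the putative stable disc is disconnected, contradicting the definition of a stable map. Equivalently, in the style of Proposition~\ref{wall of F_3}: the negative intersection $\alpha\cdot S_{-3}=-3<0$ forces $S_{-3}$ to appear as a component of any stable representative, while $\beta_1\cdot S_{-3}=0$ obstructs the required gluing. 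Hence $2\beta_1+\sigma$ carries no stable disc in $\F_3$, and by Gromov compactness no $J(t)$-holomorphic disc exists in this class for small $t$, leaving exactly the six classes listed.
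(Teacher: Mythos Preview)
Your argument is correct and follows essentially the same route as the paper: set up the same four intersection inequalities together with the Maslov equation, reduce to the three cases $(m_3,m_4)\in\{(0,0),(1,0),(0,1)\}$, and then use the structure of the Gromov limit in $\F_3$ (which must contain a copy of $S_{-3}$ whenever the intersection with $S_{-3}$ is negative, and that copy must attach to the rest) to cut the list down to six classes. The paper phrases the last step more compactly: in case $(m_3,m_4)=(1,0)$ it bounds the number $k$ of $S_{-3}$-components by $k\le 1$ via the intersection with $F_\infty$, and then observes that the remainder must meet $S_{-3}$ positively when $k=1$, giving $2-m\ge 1$ directly; your disconnectedness argument for $2\beta_1+\sigma$ is exactly the case $m=2$ of that inequality.

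One step in your write-up is not justified as written. The sentence ``the Maslov constraint $\sum_i k_i+c_1(\xi)=1$ forbids any sphere component in class $\sigma+k\phi$ with $k\ge 3$'' is not valid on its own: since $c_1(\sigma)=-1$, one could in principle cancel a large positive $c_1(\sigma+k\phi)$ against several $S_{-3}$-bubbles. What actually rules this out is the constraint $m_3,m_4\le 1$ you already derived: the total $\sigma$- and $\phi$-coefficients of the sphere part are bounded by $m_3$ and $m_4$, so any sphere component in $a\sigma+b\phi$ with $b\ge 2$ or $a\ge 2$ is impossible. Once you replace the Maslov sentence by this observation, your enumeration goes through exactly as stated. (The paper sidesteps this by never needing to list the irreducible sphere classes explicitly.)
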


        \begin{proof}
            Suppose $L$ bounds an index $2$ stable disc in the class $m_1\beta_1+m_2\beta_2+m_3\sigma+m_4\phi$, and this stable disc deforms into a holomorphic disc after perturbation. Similar to Proposition \ref{wall of F_3}, we have relations obtained from intersection numbers and Maslov index
            \begin{align*}
                m_1+m_3&\geq0,\\
                m_3&\geq0,\\
                m_4&\geq0,\\
                m_1+m_2-2m_3+m_4&\geq0,\\
                m_1+m_2-m_3+2m_4&=1.
            \end{align*}
            From these, we can see that the only possible classes are
            \begin{enumerate}
                \item $m_3=0,\ m_4=0:\ m\beta_1+(1-m)\beta_2$,  $\ \ m\geq0$;
                \item $m_3=0,\ m_4=1:\ m\beta_1+(-1-m)\beta_2+\phi$,  $\ \ m\geq0$;
                \item $m_3=1,\ m_4=0:\ m\beta_1+(2-m)\beta_2+\sigma$,  $\ \ m\geq-1$.
            \end{enumerate}
            
            In case $1$, the coefficient $(1-m)$ of $\beta_2$ has to be non-negative, by considering its intersection with the exceptional section $S$ as in Proposition \ref{wall of F_3}. In case $2$, we can similarly see that the coefficient $(-1-m)$ of $\beta_2$ is at least $-1$, since we now have a class $\phi$ that also contributes to the intersection with $S$.
            
            For case $3$, we can suppose the stable disc contains $k$ copies of $S$. Since its intersection number with $F_\infty$ is $1$, we have $k\leq 1$. By checking the intersection of $S$ and the stable disc minus $k\cdot S$ (which is non-negative if $k=0$ and is positive if $k=1$), we see that $-1\leq m\leq 1$.
        
            To sum up, we have $6$ possible classes, which match the classes listed in the proposition.
        \end{proof}

        Next, we need to understand the coefficients of these classes in the superpotential, which boils down to counting regular index $2$ discs whose boundary passes through a generic point of the torus $L$. However, making use of the result from \cite{DA09}, we have a more convenient solution.

        \begin{proposition}
            If the torus $L$ lies on the right side of $F_z$, the superpotential for $\F_3$ with respect to the chosen perturbation has the form
            \[W_{\mathrm{right}}(x,y)=x+y+\frac{T^{\omega(S_{-3}+3F)}}{xy^3}+\frac{T^{\omega(F)}}{y}+2\frac{T^{\omega(S_{-3}+2F)}}{y^2}+\frac{T^{\omega(S_{-3}+F)}x}{y}.\]
        \end{proposition}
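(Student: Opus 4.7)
The plan is to compute the superpotential as the weighted count of regular Maslov-index-$2$ $J$-holomorphic discs in each of the six candidate classes identified in the previous proposition, and to assemble the resulting terms. For each class $\beta$, the contribution to $W_{\mathrm{right}}$ is $n_\beta\,T^{\omega(\beta)}$ times the $(x,y)$-monomial determined by the boundary class $\partial\beta$; the task is to compute each $n_\beta$.

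For the four basic classes $\beta_1,\beta_2,-\beta_2+\phi,-\beta_1+3\beta_2+\sigma$, the Cho--Oh enumeration for toric manifolds gives a unique $J_0$-holomorphic disc through a generic point of $L$ in each class; these basic discs are automatically Fredholm-regular, and by the implicit function theorem the count survives our small perturbation of $J$. Each of these four classes therefore contributes with coefficient $1$, producing the first four terms $x+y+T^{A+2B}/(xy^3)+T^B/y$ of $W_{\mathrm{right}}$.

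The main step is to determine the coefficients for the two extra classes $2\beta_2+\sigma$ and $\beta_1+\beta_2+\sigma$. Their Maslov-$2$ holomorphic representatives arise only after perturbation, as deformations of stable configurations containing the exceptional sphere $S=S_{-3}$. I would extend the obstruction-bundle machinery of Propositions \ref{Ob for F_3}--\ref{zero set of section} to moduli spaces of such stable discs with boundary on $L$: the deformation $\{J(t)\}$ induces a holomorphic section of the obstruction bundle in each case, and the count $n_\beta$ is read off from the (degree of the) zero locus of this section. For $2\beta_2+\sigma$ the unperturbed configuration is a chain of two $\beta_2$-discs attached to $S$ at two distinct nodes; the associated obstruction bundle on the corresponding $\P^1$-moduli should have degree $2$. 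For $\beta_1+\beta_2+\sigma$ the configuration involves a $\beta_1$-disc paired with a $\beta_2$-disc via $S$ (equivalently, a disc attached to the deformed sphere $E$ produced in Subsection $2.1$), giving degree $1$. Matching the $T$-exponents and monomials then yields the final two terms $2T^{A+B}/y^2$ and $T^Ax/y$.

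The principal difficulty is the degree-$2$ computation for $2\beta_2+\sigma$: establishing this multiplicity requires careful enumeration of stable configurations (including possible double-cover contributions) together with a direct obstruction-sheaf computation on the resulting nodal moduli space, parallel to but more intricate than the argument in Proposition \ref{Ob for F_3}. Alternatively, one can adapt the semi-Fano framework of \cite{DA09} (developed there for $\F_2$) to the present non-Fano setting by treating the regular sphere $E$ from Subsection $2.1$ in the role of a Chern-number-zero sphere bubble.
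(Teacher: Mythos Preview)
Your approach is workable in principle but takes a much harder route than the paper, and one of its key steps is mis-described.

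The paper's proof is a one-paragraph cobordism argument. Consider the toric perturbation $J_0(t)$ of \cite[\S 3.2]{DA09}, for which there are no walls and the superpotential formula (with exactly these six terms and coefficients $1,1,1,1,2,1$) is already established there. Connect the chosen generic perturbation $J_z(t)$ to $J_0(t)$ by a continuous family $\{J_\tau(t)\}$; as $\tau$ varies, the wall sweeps from $F_0$ to $F_z$, staying strictly to the left of $L$ throughout. Hence no index-$0$ bubbling occurs near $L$ in this family, the moduli spaces of index-$2$ discs with boundary on $L$ are cobordant for the two perturbations, and the disc counts in each class agree. The formula then follows immediately from \cite[\S 3.2]{DA09}. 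No direct obstruction-bundle computation is needed for the coefficients $2$ and $1$.

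By contrast, you propose a direct obstruction-bundle enumeration for each extra class. This is substantially more delicate than you indicate, and your description of the unperturbed configurations in class $2\beta_2+\sigma$ is not correct: ``two $\beta_2$-discs attached to $S$ at two distinct nodes'' has two boundary circles on $L$ and is not a stable disc contributing to the superpotential. The relevant unperturbed objects are a single disc component in class $2\beta_2$ (a branched double cover of a $\beta_2$-disc, meeting $S$ in two points) with $S$ bubbled off at an interior node. Organizing these multiple-cover configurations into a moduli space, identifying its obstruction bundle, and extracting the virtual count $2$ is genuinely intricate, and your proposal does not supply this. The paper sidesteps the whole issue by importing the answer from the toric perturbation via cobordism; this is both shorter and avoids the multiple-cover analysis entirely.
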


        \begin{remark}
            Recall that the uncorrected SYZ mirror space consists of pairs $(L,\nabla)$, where $L$ is a fiber torus in the SYZ fibration, $\nabla$ is a unitary rank $1$ local system on $L$. In the proposition, $(x,y)\in(\Lambda^*)^2$ is a coordinate for $(L,\nabla)$, defined as
            \begin{align*}
                x=&\ T^{\omega(\beta_1)}\mathrm{hol}_\nabla(\partial\beta_1),\\
                y=&\ T^{\omega(\phi-\beta_2)}\mathrm{hol}_\nabla(-\partial\beta_2),
            \end{align*}
            where $\mathrm{hol}_{\nabla}$ stands for the holonomy of $\nabla$.
        \end{remark}

        \begin{proof}
            Consider the deformation of $J$ constructed in \cite[\S 3.2]{DA09}. The leftmost fiber $F_0$ deforms with $S$ into a holomorphic sphere. Hence, for this perturbation of $J$, there are no walls.
            We denote this perturbation by $J_0(t)$, and denote the perturbation we take in the proposition by $J_z(t)$.

            Take a continuous family of perturbations $\{J_\tau(t)\}$ connecting $J_0(t)$ and $J_z(t)$. The wall appears and moves from $F_0$ to $F_z$ when varying the perturbation $J_\tau(t)$. There are no bubbles appearing away from the wall in this process. Since there is a cobordism between the respective moduli spaces, the count of index $2$ discs in corresponding classes should match for the two choices of perturbation. Therefore, the proposition follows from the formula in \cite[\S 3.2]{DA09}.
        \end{proof}

        Similarly, one can prove the following.

        \begin{proposition}
            If the torus $L$ lies on the left side of $F_z$, the superpotential for $\F_3$ with respect to the chosen perturbation has the form
            \[W_{\mathrm{left}}(x,y)=x+y+\frac{T^{\omega(S_{-3}+3F)}}{xy^3}+\frac{T^{\omega(F)}}{y}+2\frac{T^{\omega(S_{-3}+2F)}}{y^2}+\frac{T^{\omega(2S_{-3}+4F)}}{xy^4}.\]
        \end{proposition}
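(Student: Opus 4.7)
The argument is a direct mirror of the proof of the right-side formula, with the roles of $F_0$ and $F_\infty$ exchanged. The plan splits into two halves: first, enumerate the candidate index $2$ classes by intersection inequalities; second, pin down their coefficients by a cobordism argument against a reference perturbation that carries no wall on the left side of $F_z$.

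For the enumeration I would rerun the intersection-number analysis from the previous proof, now with the torus $L$ on the left of $F_z$. The constraints from positivity of intersection with $F_0$, $F_\infty$, $S_{+3}$ and the Maslov index condition $m_1+m_2-m_3+2m_4=1$ keep the same form; the inequality associated to $E$ changes because the geometric position of $L$ relative to $E$ has flipped. Solving the modified system should yield exactly six candidate classes: the four basic classes $\beta_1$, $\beta_2$, $-\beta_2+\phi$, $-\beta_1+3\beta_2+\sigma$, together with the two extras $2\beta_2+\sigma$ and the new class $-\beta_1+4\beta_2+2\sigma$ (with $m_3=2$), matching the six terms of $W_{\mathrm{left}}$ term by term; in particular the new class accounts for $T^{2A+2B}/(xy^4)$.

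For the coefficients I would construct a reference perturbation $J_\infty(t)$ in which $S\cup F_\infty$ (in place of $S\cup F_0$) deforms into a single holomorphic sphere, e.g.\ by conjugating the DA09 recipe by the automorphism of $\F_3$ covering $z\mapsto 1/z$ on $S_{-3}$. For $J_\infty(t)$ the deformed sphere sits on the right boundary of the polytope, so there are no walls in the interior and every product torus lies ``on the left'' of any would-be wall; hence the superpotential for $J_\infty(t)$ is by construction $W_{\mathrm{left}}$. Connecting $J_\infty(t)$ to $J_z(t)$ through a continuous family $\{J_\tau(t)\}$, the wall slides inward from $F_\infty$ to $F_z$. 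A torus $L$ strictly to the left of $F_z$ is never crossed by the wall along this family, so by Gromov compactness the moduli spaces of index $2$ discs in each enumerated class are cobordant across $\tau$ and the weighted counts agree.

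The main obstacle is pinning down the new extra term: one must verify that the class $-\beta_1+4\beta_2+2\sigma$ enters the $J_\infty(t)$ superpotential with coefficient exactly $1$, mirroring the role played by $\beta_1+\beta_2+\sigma$ in DA09. This amounts to analyzing the multi-cover and virtual disc contributions of the new exceptional sphere $E$ (now in class $\sigma+\phi$ sitting over $F_\infty$) in direct analogy with the DA09 computation, and confirming that no competing class with $m_3=2$ survives the Step 1 enumeration. The remaining five coefficients --- a $1$ for each of the four basic classes and a $2$ for $2\beta_2+\sigma$ --- should transfer from the DA09 formula unchanged, since the corresponding classes behave symmetrically under $F_0\leftrightarrow F_\infty$.
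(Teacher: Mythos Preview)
Your proposal is correct and matches the paper's intended argument (the paper simply says ``Similarly, one can prove the following''), but one step is stated too loosely. The inequality associated to $E$ does indeed change --- with $L$ to the left of $F_z$ one has $\beta_1\cdot E=0$ rather than $1$, so the constraint becomes $m_2-2m_3+m_4\ge 0$ --- but the resulting linear system together with the Maslov condition does \emph{not} by itself give only six classes: it leaves the infinite families $(-m_3,1+2m_3,m_3,0)$, $(1-m_3,2m_3,m_3,0)$, $(-m_3,2m_3-1,m_3,1)$ for all $m_3\ge 0$. To cut these down to six you must invoke the same secondary Gromov-compactness step used on the right (pass to stable discs in unperturbed $\F_3$, peel off copies of $S$, and check positivity against $F_0$, $F_\infty$, and $S$). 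Once you include that refinement, your list is exactly right, and the cobordism argument against the $J_\infty(t)$ reference perturbation (the $z\mapsto 1/z$ conjugate of the DA09 one) pins down the coefficients as you describe.
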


        $W_\mathrm{right}$ and $W_\mathrm{left}$ represent the superpotential function under two different coordinates. These two expressions are related by a change of variables, which is determined by the contribution of index $0$ discs bounded by tori on the wall. This change of variables is called the \textbf{wall-crossing transformation}. 

        \begin{proposition}\label{change of variables}
            The wall-crossing transformation that relates
            $W_{\mathrm{right}}(x,y)$ and $W_{\mathrm{left}}(x',y')$ is
        \begin{align*}
            y'&=y,\\
            x'&=x\cdot\left(1+\frac{T^A}{y}\right).
        \end{align*} 
            Here $A$ denotes $\omega(S_{-3}+F)$.
        \end{proposition}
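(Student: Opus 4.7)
The plan is to invoke the general wall-crossing formula for SYZ fibrations from \cite[\S 3]{DA07} (see also \cite[\S A]{AAK16}) and specialize it to the index $0$ discs identified in the previous subsection. By Proposition \ref{index 0 discs for F_3}, the only classes carrying index $0$ discs along the wall are the positive multiples $m(\beta_2+\sigma)$; the primitive class $\beta_2+\sigma$ supports a unique regular disc---the upper hemisphere of the deformed sphere $E$ identified after Lemma \ref{continuing previous proposition}---and higher multiples arise only as multiple covers of it. The wall-crossing formula then asserts that each weight monomial $z^\beta=T^{\omega(\beta)}\mathrm{hol}_\nabla(\partial\beta)$ is transformed across the wall by
\[z^\beta\;\longmapsto\;z^\beta\cdot\bigl(1+z^{\beta_2+\sigma}\bigr)^{\langle\partial\beta,\,\partial(\beta_2+\sigma)\rangle_{H_1(L)}},\]
with the contributions of the multiple covers absorbed into this binomial factor.

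First I would express $z^{\beta_2+\sigma}$ in the coordinates $(x,y)$. Since $\sigma$ is a closed sphere class, $\partial(\beta_2+\sigma)=\partial\beta_2$, and using the definition $y=T^{\omega(\phi-\beta_2)}\mathrm{hol}_\nabla(-\partial\beta_2)$ one finds
\[z^{\beta_2+\sigma}=T^{\omega(\beta_2)+\omega(\sigma)}\mathrm{hol}_\nabla(\partial\beta_2)=\frac{T^{\omega(\sigma)+\omega(\phi)}}{y}=\frac{T^A}{y}.\]
Next I would evaluate the intersection pairings. Choosing $\{\partial\beta_1,\partial\beta_2\}$ as an oriented basis of $H_1(L)\cong\Z^2$ normalized by $\langle\partial\beta_1,\partial\beta_2\rangle=+1$, one has $\langle\partial\beta_1,\partial(\beta_2+\sigma)\rangle=1$ and $\langle\partial(\phi-\beta_2),\partial(\beta_2+\sigma)\rangle=0$. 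Substituting into the wall-crossing formula immediately yields $x'=x(1+T^A/y)$ and $y'=y$, as claimed.

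As a sanity check, setting $B:=\omega(F)$ locally, I would verify directly that this substitution sends $W_\mathrm{right}$ to $W_\mathrm{left}$: the terms $x$ and $T^Ax/y$ combine as $x(1+T^A/y)=x'$; the monomial $T^{A+2B}/(xy^3)$ expands as $T^{A+2B}(1+T^A/y')/(x'y'^3)=T^{A+2B}/(x'y'^3)+T^{2A+2B}/(x'y'^4)$; and the remaining terms depend only on $y$ and are therefore unchanged. The main subtlety of the proof---and the only place nontrivial input beyond what has already been established is needed---is justifying that the multiple covers $m(\beta_2+\sigma)$ for $m\geq 2$ combine precisely into the binomial factor $(1+T^A/y)$ rather than producing independent corrections; this is precisely the content of the multiple cover formula underlying the wall-crossing transformation in \cite{DA07}, whose hypotheses are satisfied because Proposition \ref{Ob for F_3} and Proposition \ref{zero set of section} establish regularity of the primitive index $0$ disc with normal bundle $\OO(-1)$.
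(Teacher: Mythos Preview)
Your argument is correct in spirit but proceeds differently from the paper. You invoke the full binomial wall-crossing formula $z^\beta\mapsto z^\beta(1+z^{\beta_2+\sigma})^{\langle\partial\beta,\partial(\beta_2+\sigma)\rangle}$ directly, which requires knowing that the multiple covers $m(\beta_2+\sigma)$ contribute exactly so as to produce the factor $(1+T^A/y)$---as you rightly flag, this is a nontrivial input (a multiple cover formula for discs with normal bundle $\OO(-1)$) that is not established elsewhere in the paper and is not quite what \cite[\S 3.3]{DA07} provides.

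The paper sidesteps this entirely. It only uses the \emph{general shape} of the wall-crossing transformation from \cite{DA07}: since the index $0$ classes are multiples of $\beta_2+\sigma$ and $\partial(\beta_2+\sigma)=\partial\beta_2$, one has $y'=y$ and $x'=x\cdot h(T^A/y)$ for some unknown power series $h=1+O(T^A/y)$. Then, because $W_{\mathrm{left}}$ and $W_{\mathrm{right}}$ have already been determined independently (via the cobordism argument relating each side to Auroux's toric perturbation), one substitutes $x'=xh$ into $W_{\mathrm{left}}$ and equates with $W_{\mathrm{right}}$; comparing the coefficients of $x$ forces $h(T^A/y)=1+T^A/y$. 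Your ``sanity check'' is thus promoted to the actual proof, and the multiple cover issue never arises. This makes the paper's argument more self-contained, while your route is more conceptual but leans on an external analytic input that would need separate justification.
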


        \begin{proof}
            Denote $\omega(F)$ by $B$. 
            By Proposition \ref{index 0 discs for F_3}, the classes of index $0$ discs contributing to the change of variables are $m(\beta_2+\sigma)$. The class $m(\beta_2+\sigma)$ corresponds to the monomial $(\frac{T^A}{y})^m$. From \cite[\S 3.3]{DA07}, we see that $y'=y$ and that
            \[x'=x\cdot h\left(\frac{T^A}{y}\right),\]
            where $h\left(\frac{T^A}{y}\right)$ is a power series of the form $1+O\left(\frac{T^A}{y}\right)$. Thus, we have 
            \begin{align*}
                W_\mathrm{left}(x',y')=W_\mathrm{left}(xh\left(\frac{T^A}{y}\right),y)&=x h\left(\frac{T^A}{y}\right)+y+\frac{T^{A+2B}}{xy^3 h\left(\frac{T^A}{y}\right)}+\frac{T^B}{y}+2\frac{T^{A+B}}{y^2}+\frac{T^{2A+2B}}{xy^4h\left(\frac{T^A}{y}\right)}.
            \end{align*}
            Comparing this expression with that of $W_\mathrm{right}(x,y)$ (they should equal), we have
            \[x h\left(\frac{T^A}{y}\right)+\frac{T^{A+2B}}{xy^3 h\left(\frac{T^A}{y}\right)}+\frac{T^{2A+2B}}{xy^4h\left(\frac{T^A}{y}\right)}=x+\frac{T^{A+2B}}{xy^3}+\frac{T^Ax}{y}.\]
            Viewing both sides as Laurent polynomials of $x$ and comparing the $x$-term, we conclude that
            \[h\left(\frac{T^A}{y}\right)=1+\frac{T^A}{y}.\]
        \end{proof}

        Recall from \cite{DA07} that the mirror is constructed by gluing local charts via coordinate change formulas given by wall-crossing transformations. Thus, by Proposition \ref{change of variables} we have
        \[\F_3^\vee=\{(u,v,w)\in\Lambda^2\times\Lambda^*|\ uv=1+T^Aw\}.\]
        Here we view $(x',y)$ as the coordinate $(u,w^{-1})\in\Lambda\times\Lambda^*$ for this mirror space, and $(x,y)$ as the coordinate $(v^{-1},w^{-1})\in\Lambda^*\times\Lambda^*$. These two coordinates satisfy the desired change of variables formula on the overlap of two charts. Consequently, $W_\mathrm{right}$ and $W_\mathrm{left}$ glue to an analytic function $W$ on $\F_3^\vee$, which is the superpotential then.

        Also, note that the mirror space $(\Lambda^*)^2$ obtained through the perturbation in \cite[\S 3]{DA09} can be viewed as an affine open subspace—the $(x,y)$-chart of $\F_3^\vee$, and the superpotential on $(\Lambda^*)^2$ is the restriction of the superpotential on $\F_3^\vee$.  
        
        In conclusion, we see from the example of $\F_3$ that different choices of perturbation of a non-Fano toric manifold lead to different SYZ mirrors. If we perturb the complex structure in an equivariant manner, the mirror space is still $(\Lambda^*)^n$, and no walls are present. If we perturb in a non-toric way as we do here, the wall-crossing phenomenon occurs, and the new mirror space contains $(\Lambda^*)^n$ as an affine open subspace.

    \section{An SYZ Mirror of the Fourth Hirzebruch Surface $\F_4$}

        In this section, we choose a particular perturbation to investigate the SYZ mirror of $\F_4$. The perturbation we choose here is toric, so the mirror space is again $(\Lambda^*)^2$, which we will see later. The superpotential now has infinitely many terms, as in this case there are infinitely many classes of Maslov index $2$ discs that contribute non-trivially.

        We first define our deformation of the complex structure  of $\F_4$ as follows.

        Recall that $\F_4$ and $\F_2$ are projectivization of bundles $\OO_{\P^1}\oplus\OO_{\P^1}(4)$ and $\OO_{\P^1}\oplus\OO_{\P^1}(2)$ respectively. Take a holomorphic branched double cover $\psi:\P^1\to\P^1$. Since $\OO_{\P^1}\oplus\OO_{\P^1}(2)$ pulls back to $\OO_{\P^1}\oplus\OO_{\P^1}(4)$ along $\psi$, $\psi$ is lifted to a holomorphic bundle map\[\Psi:\F_4\to\F_2.\]Thus, every deformation of $\F_2$ induces a deformation of $\F_4$ via the map $\Psi$. The induced deformation of  $\F_4$ satisfies that $\Psi$ remains holomorphic after deformation.

        We take $\psi:z\mapsto z^2$, so the branched fibers of $\F_4$ under the map $\Psi$ are $F_0$ and $F_\infty$, i.e., the leftmost fiber and the rightmost fiber. Next we take the deformation $J_{\F_2}(t)$ of $\F_2$ constructed in \cite[\S 3.2]{DA09} (in fact the first-order deformation of $\F_2$ is unique up to a scalar). This gives our deformation $J_{\F_4}(t)$ of $\F_4$.

        In the following subsections, we denote again by $\sigma,\phi\in\pi_2(\F_4)$ the homotopy classes of the exceptional section $S_{-4}$ and the fiber of $\F_4$ respectively.

        \subsection{Obstruction Bundle}

            In the deformation $J_{\F_2}(t)$ of $\F_2$, when $t\neq 0$, the deformed $\F_2$ is in fact $\F_0(=\P^1\times\P^1)$ as a complex manifold. For each fiber sphere of $\F_2$, its union with $S_{-2}$ deforms into a holomorphic sphere in $\F_0$. Indeed, these stable spheres are already regular (the obstruction bundle over the corresponding moduli space is the zero bundle).

            Hence, in the deformation $J_{\F_4}(t)$ from $\F_4$ to $\F_0$, the union of the spheres $F_z$, $F_{-z}$ and $S_{-4}$ deforms into a regular holomorphic sphere in $\F_0$ for each $z\in\C^*$, since the nodal sphere $F_z\cup F_{-z}\cup S_{-4}$ is the preimage of the corresponding deforming nodal sphere in $\F_2$. Similarly, two copies of $F_0$ and $S_{-4}$ deform into a holomorphic sphere; two copies of $F_\infty$ and $S_{-4}$ deform into a holomorphic sphere.
            We denote the sphere deformed from $F_z+F_{-z}+S_{-4}$ by $E_z$.

            In this subsection, we show by obstruction bundle that the above described $E_z$ ($z\in\P^1$) are the only regular spheres deformed from nodal spheres of the form `$F_{z_0}+F_{z_1}+S_{-4}$'. (We use `$+$' instead of `$\cup$' because $F_{z_0}$ and $F_{z_1}$ can be the same sphere.) For the rest of the section, we denote $S_{-4}$ by $S$ when there is no risk of ambiguity.

            \begin{remark}
                The reason for considering two fibers with $S$ instead of one fiber with $S$ is that the expected complex dimension of the corresponding moduli space is $1$ for the former and negative for the latter. These deformed spheres $E_z$ will be used in an intersection number argument when finding the classes of index $0$ discs.
            \end{remark}

            Consider the moduli space $\MM(\F_4;\sigma+2\phi)$ consisting of stable spheres $F_{z_0}+F_{z_1}+S$. We identify $\MM(\F_4;\sigma+2\phi)$ with $\P^1\times\P^1$ via the coordinates $z_0,z_1$ of nodal points. Denote the projections of $\P^1\times\P^1$ to each factor by $\pi_0$ and $\pi_1$.

            \begin{remark}
                We point out here that $\MM(\F_4;\sigma+2\phi)$ does not consist of all stable maps from a possibly nodal sphere in the class $\sigma+2\phi$. Rather, it is one of the two irreducible components of the corresponding full moduli space. The other higher dimensional component consists of configurations in the form $\Tilde{F}_z+S$, where $\Tilde{F}_z$ stands for a double branched cover of $F_z$.

                Another difference of $\MM(\F_4;\sigma+2\phi)$ from the usual moduli space is that we did not quotient the space by automorphisms that interchange the two fiber spheres.
            \end{remark}

            \begin{proposition}\label{Ob for F_4}
                The obstruction bundle $\OO b$ over $\MM(\F_4;\sigma+2\phi)\cong\P^1\times\P^1$ is isomorphic to $\pi_0^*\OO_{\P^1}(1)\otimes\pi_1^*\OO_{\P^1}(1)$.
            \end{proposition}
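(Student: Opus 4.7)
The plan is to mimic the proof of Proposition \ref{Ob for F_3}, now attaching two fiber components to $S$ rather than one. I will first compute the normal sheaf of a generic stable sphere in $\MM(\F_4;\sigma+2\phi)$, then its $H^1$ pointwise, and finally identify the resulting line bundle via a family version of Serre duality combined with a divisor-class computation on $\P^1 \times \P^1 \times \P^1$.

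\textbf{Step 1: Normal sheaf.} For $(z_0, z_1)$ with $z_0 \ne z_1$, view $D = S + F_{z_0} + F_{z_1}$ as an effective Cartier divisor. Then $\NN_{z_0,z_1} = \OO_{\F_4}(D)|_D$ decomposes componentwise: using $\OO_{\F_4}(S)|_S \cong \OO_{\P^1}(-4)$ (self-intersection of $S$), the triviality of the normal bundle of a fiber, and $F_{z_i} \cap S = \{p_i\}$, one obtains $\NN|_S \cong \OO_S(-4)(z_0 + z_1)$ and $\NN|_{F_{z_i}} \cong \OO_{\P^1}(1)$, exactly as in the $\F_3$ case but with an extra point in the divisor on $S$.

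\textbf{Step 2: Cohomology.} Using the short exact sequence at the nodes
\begin{equation*}
0 \to \NN_{z_0,z_1} \to i_{S,*}(\NN|_S) \oplus j_{0,*}(\NN|_{F_{z_0}}) \oplus j_{1,*}(\NN|_{F_{z_1}}) \to \C_{p_0} \oplus \C_{p_1} \to 0,
\end{equation*}
and the vanishing $H^1(F_{z_i}, \OO_{\P^1}(1)) = 0$ together with the surjectivity of evaluation of $H^0(\OO(1))$ at $p_i$, the long exact sequence collapses to
\begin{equation*}
H^1(\Sigma, \NN_{z_0,z_1}) \cong H^1(S, \OO_S(-4)(z_0 + z_1)),
\end{equation*}
which is one-dimensional by a degree count. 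This identifies the fiber of $\OO b$ at $(z_0, z_1)$.

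\textbf{Step 3: Serre duality and line bundle identification.} By family Serre duality as in Proposition \ref{Ob for F_3}, $\OO b^\vee$ is a line bundle whose fiber at $(z_0, z_1)$ is $H^0(\P^1, \OO_{\P^1}(2)(-z_0 - z_1))$. Introducing $\P^1 \times \P^1 \times \P^1$ with coordinates $(w, z_0, z_1)$ and projections $p, q_0, q_1$, this realizes
\begin{equation*}
\OO b^\vee \cong (q_0, q_1)_*\bigl(p^*\OO_{\P^1}(2) \otimes \OO(-\Delta_{p,q_0}) \otimes \OO(-\Delta_{p,q_1})\bigr).
\end{equation*}
The relation $\OO(\Delta_{p,q_i}) \cong p^*\OO_{\P^1}(1) \otimes q_i^*\OO_{\P^1}(1)$ collapses the interior sheaf to $q_0^*\OO(-1) \otimes q_1^*\OO(-1)$, whose pushforward along $(q_0, q_1)$ is $\pi_0^*\OO_{\P^1}(-1) \otimes \pi_1^*\OO_{\P^1}(-1)$. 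Dualizing yields the proposition.

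The main obstacle I anticipate is the diagonal stratum $\{z_0 = z_1\}$, where both fiber components meet $S$ at the same point and the domain curve no longer has the generic three-chain structure, so the pointwise cohomology argument in Step 2 does not directly apply. Rather than reworking the computation at these degenerate stable maps, I would observe that $\OO b$ is a holomorphic line bundle on the smooth surface $\P^1 \times \P^1$ (using that it is reflexive of rank one), and that $\mathrm{Pic}(\P^1 \times \P^1) = \Z^2$, so its isomorphism class is pinned down by its bidegree. The bidegree can then be read off by restricting to generic horizontal and vertical slices lying entirely in the open locus $\{z_0 \ne z_1\}$, where Steps 1--3 apply unchanged and give degree one in each factor.
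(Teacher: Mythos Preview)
Your proposal is correct and follows essentially the same approach as the paper: compute the normal sheaf componentwise, identify the fiber of $\OO b$ as $H^1(S,\OO_S(-4)(z_0+z_1))$, apply family Serre duality, and then evaluate the pushforward on $\P^1\times\P^1\times\P^1$ using $\OO(\Delta_i)\cong\pi_i^*\OO(1)\otimes\pi_2^*\OO(1)$. The only difference is your added discussion of the diagonal stratum $\{z_0=z_1\}$, which the paper passes over silently; your workaround via restriction to generic slices is fine, though one could also note that the divisor $z_0+z_1$ and the pushforward computation make sense uniformly over all of $\P^1\times\P^1$.
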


            \begin{proof}
                Using the same method in Proposition \ref{Ob for F_3}, one can show that the $H^1$ of the normal sheaf of the stable sphere $F_{z_0}+F_{z_1}+S$ is $H^1(S,\OO_S(-4)(z_0+z_1))$. Here $z_0+z_1$ represents the sum of divisors $z_0$ and $z_1$ in $S$. Thus, the obstruction bundle $\OO b$ is a line bundle over $\P^1\times\P^1$ with fiber $H^1(\P^1,\OO_{\P^1}(-4)(z_0+z_1))$ over $(z_0,z_1)$.

                Then, the dual $\OO b^\vee$ has fiber $H^0(\P^1,\OO_{\P^1}(2)(-z_0-z_1))$, and can hence be represented as
                \[\OO b^\vee\cong\pi_{01,*}((\pi_2^*\OO_{\P^1}(2))(-\Delta_0-\Delta_1)).\]
                Here $\pi_{01}:\P^1\times\P^1\times\P^1\to\P^1\times\P^1$ is the projection to the first two factors, where $z_0$ and $z_1$ live. $\pi_2:\P^1\times\P^1\times\P^1\to\P^1$ is the projection to the third factor. $\Delta_0$ and $\Delta_1$ are divisors in $\P^1\times\P^1\times\P^1$ defined by $\{z_0=z_2\}$ and $\{z_1=z_2\}$.

                Now since $\OO_{\P^1\times\P^1\times\P^1}(\Delta_0)\cong\pi_0^*\OO_{\P^1}(1)\otimes\pi_2^*\OO_{\P^1}(1)$ and $\OO_{\P^1\times\P^1\times\P^1}(\Delta_1)\cong\pi_1^*\OO_{\P^1}(1)\otimes\pi_2^*\OO_{\P^1}(1)$, we can obtain the expression of $\OO b^\vee$ and the proposition follows.
            \end{proof}

            Similarly, one can prove the counterparts of Proposition \ref{deformation gives a section} and Proposition \ref{zero set of section}. In other words, a deformation of the complex structure on $\F_4$ gives rise to a holomorphic section $s$ of the obstruction bundle $\OO b\to\MM(\F_4;\sigma+2\phi)$, and the zero set $s^{-1}(0)$ consists of stable spheres $u\in\MM(\F_4;\sigma+2\phi)$ that deform into a possibly nodal sphere. Note that the section $s$ has to be symmetric (invariant under the automorphism of $\P^1\times\P^1$ that interchanges $z_0$ and $z_1$).

            We take the deformation to be $J_{\F_4}(t)$. Since we have previously seen that $F_z+F_{-z}+S$ deforms into a holomorphic sphere $E_z$, the anti-diagonal divisor $\{(z,-z)|z\in\P^1\}$ is contained in the zero set $s^{-1}(0)$. However, since $s$ is a nonzero holomorphic section of $\pi_0^*\OO_{\P^1}(1)\otimes\pi_1^*\OO_{\P^1}(1)$, the zero set $s^{-1}(0)$ should have a unique intersection point with each $\P^1$-slice of $\P^1\times\P^1$. Therefore, we conclude that $s^{-1}(0)=\{(z,-z)|z\in\P^1\}$.

            \begin{remark}
                One can prove that $s\neq 0$ by the fact that $s$ is the image of a constant section of $\OO b'$, where $\OO b'$ is rank $3$ trivial bundle with fiber $H^1(S,\OO_S(-4))$, which has a natural surjective bundle map to $\OO b$. However, our discussion in the following subsections does not actually use the fact that $s^{-1}(0)=\{(z,-z)|z\in\P^1\}$.
            \end{remark}

        \subsection{Maslov Index $0$ Holomorphic Discs and Walls}

            In this subsection, we show that there are no regular index $0$ discs in our chosen perturbation $J_{\F_4}(t)$, so no walls are present.

            Fix a product torus $L$ in $\F_4$ (a non-degenerate orbit of the torus action). The homotopy group $\pi_2(\F_4,L)$ is generated by classes $\beta_1,\beta_2,\sigma,\phi$. Here $\beta_1,\beta_2$ are the basic classes indicated in Figure \ref{image_4}.

            \begin{figure}[htbp]  
            \centering  
            \includegraphics[width=0.6\textwidth]{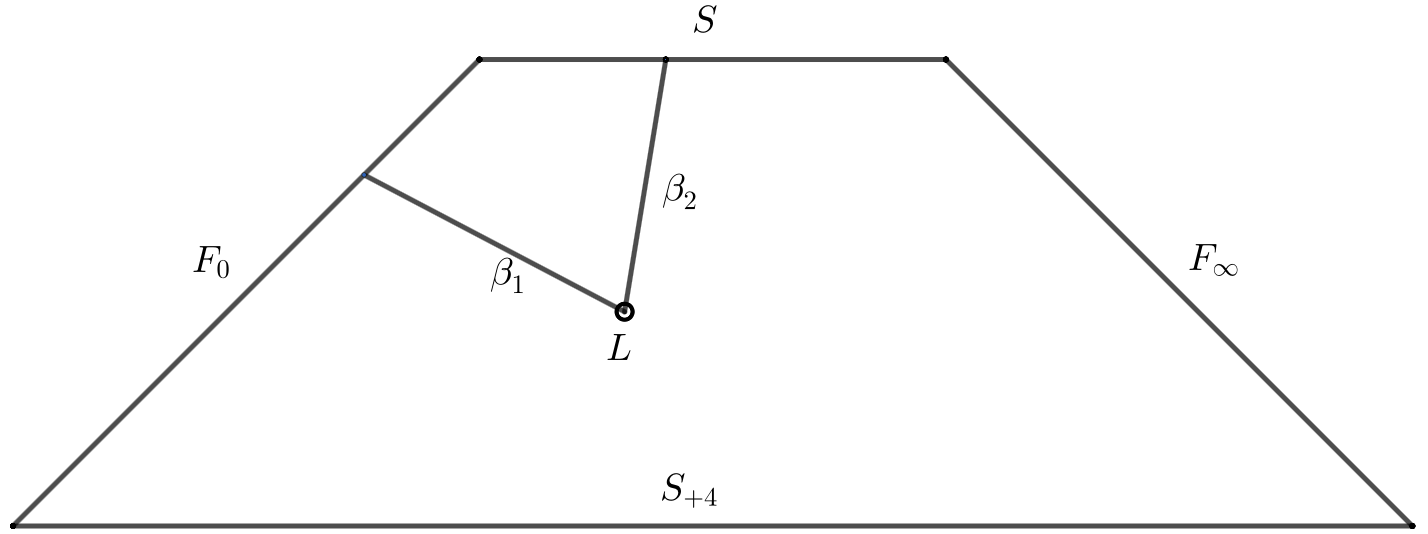}  
            \caption{Disc classes in $\pi_2(\F_4,L)$}  
            \label{image_4}  
        \end{figure}

            \begin{proposition}\label{index 0 discs for F_4}
                The only classes that may contain any index $0$ discs after perturbation are $m(2\beta_2+\sigma)$ for positive integers $m$.
            \end{proposition}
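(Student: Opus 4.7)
The plan is to mirror the proof of Proposition \ref{index 0 discs for F_3}: combine the Maslov index $0$ condition with positivity of intersection against a carefully chosen family of $J_{\F_4}(t)$-holomorphic spheres disjoint from $L$. Since $c_1(\sigma) = -2$ and $c_1(\phi) = 2$ on $\F_4$, for a class $\beta = m_1\beta_1 + m_2\beta_2 + m_3\sigma + m_4\phi$ the Maslov $0$ condition reads
\[m_1 + m_2 - 2m_3 + 2m_4 = 0.\]

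The spheres I would pair $\beta$ against are the three toric facets $F_0, F_\infty, S_{+4}$ (all $J_{\F_4}(t)$-holomorphic and disjoint from any interior torus $L$), together with the two deformed spheres $E_0$ and $E_\infty$ in class $\sigma + 2\phi$ arising from the stable configurations $2F_0 + S$ and $2F_\infty + S$ respectively. These last two exist because $(0,0)$ and $(\infty,\infty)$ lie in the zero locus $\{(z,-z): z \in \P^1\}$ of the obstruction section from Subsection $3.1$, and a sufficiently small perturbation keeps them disjoint from $L$. Writing the basic disc classes as $\beta_{D_1},\ldots,\beta_{D_4}$ with $\beta_1 = \beta_{D_1}$, $\beta_2 = \beta_{D_2}$, $\sigma = \beta_{D_1} + \beta_{D_3} - 4\beta_{D_2}$ and $\phi = \beta_{D_2} + \beta_{D_4}$, and using the representatives $2D_1 + D_2$ and $2D_3 + D_2$ of $\sigma + 2\phi$ in $H_2(\F_4 \setminus L)$, positivity of intersection yields
\begin{align*}
    F_0 \cdot \beta &= m_1 + m_3 \geq 0,\\
    F_\infty \cdot \beta &= m_3 \geq 0,\\
    S_{+4} \cdot \beta &= m_4 \geq 0,\\
    E_0 \cdot \beta &= 2m_1 + m_2 - 2m_3 + m_4 \geq 0,\\
    E_\infty \cdot \beta &= m_2 - 2m_3 + m_4 \geq 0.
\end{align*}

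Substituting $m_2 - 2m_3 = -m_1 - 2m_4$ from the Maslov equation into the last two inequalities reduces them to $m_1 - m_4 \geq 0$ and $-m_1 - m_4 \geq 0$; combined with $m_4 \geq 0$ this forces $m_1 = m_4 = 0$. The Maslov relation then gives $m_2 = 2m_3$, and nontriviality of the disc together with $m_3 \geq 0$ gives $m_3 \geq 1$. Thus $\beta = m(2\beta_2 + \sigma)$ for some integer $m \geq 1$, as claimed.

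The main technical care is to ensure positivity of intersection is applicable, i.e., the stable disc cannot share an irreducible component with any of the five spheres above. For the toric facets this follows from a standard Gromov compactness argument (as in Proposition \ref{wall of F_3}); for $E_0$ and $E_\infty$ it suffices to argue that if the stable disc contained either as a component, the residual class $\beta - (\sigma + 2\phi)$ would still have to satisfy the Maslov and intersection constraints, permitting an inductive descent to the base case where the disc is disjoint from $E_0, E_\infty$.
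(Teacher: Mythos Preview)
Your argument is correct and essentially identical to the paper's: you intersect the candidate class against $F_0, F_\infty, S_{+4}, E_0, E_\infty$ and combine the resulting inequalities with the Maslov relation $m_1 + m_2 - 2m_3 + 2m_4 = 0$ to force $\beta = m(2\beta_2 + \sigma)$. The final paragraph's caution about shared components is unnecessary here (the proposition concerns smooth holomorphic discs after perturbation, whose boundary lies on $L$, so they cannot contain any of the five closed spheres), but it does no harm.
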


            \begin{proof}
                Suppose $L$ bounds an index $0$ stable disc in the class $m_1\beta_1+m_2\beta_2+m_3\sigma+m_4\phi$, and it deforms into a holomorphic disc after perturbation. It should have non-negative intersection with the spheres $F_0,F_\infty,S_{+4},E_0,E_\infty$ ($E_z$ is the sphere deformed from $F_z+F_{-z}+S$). By a computation of intersection number, we have
            \begin{align*}
                m_1+m_3&\geq0,\\
                m_3&\geq0,\\
                m_4&\geq0,\\
                m_2-2m_3+m_4&\geq0,\\
                2m_1+m_2-2m_3+m_4&\geq0,\\
                m_1+m_2-2m_3+2m_4&=0,
            \end{align*}
            where the last equality comes from the Maslov index condition.

                From these relations, the only possible classes are $2m\beta_2+m\sigma$, $m\in\Z_{>0}$. 
            \end{proof}
            
            In order to exclude the possibility of $m(2\beta_2+\sigma)$ to contain eligible index $0$ discs, we need to examine the chosen deformation more carefully. We start by examining the deformation of $\F_2$ to $\F_0\cong\P^1\times\P^1$ described in \cite{DA09}.

            We choose a coordinate $(z_1',z_2')$ for $\P^1\times\P^1$, the deformed $\F_2$, in the same way as in \cite[\S 3.2]{DA09} (the notation for $(z_1',z_2')$ there is `$(x,y)$'). The fiber of the projection map $\F_2\to\P^1$ over $z\in\P^1$ is denoted by $F_z'$.  These fibers $F_z'$ are regular holomorphic spheres and survive in the deformation. As in \cite[\S 3.2]{DA09}, the deformed fibers $F_z'$ in $\P^1\times\P^1$ are given by $\{z_1'=z\}$, as long as we choose the coordinates in a suitable way. In particular, the branched fibers $F'_0$ and $F'_\infty$ deform into $\{z'_1=0\}$ and $\{z'_1=\infty\}$. 
            
            We define the coordinate $(z_1,z_2)$ on $\P^1\times\P^1$, the deformed $\F_4$, as the pullback of the coordinate $(z_1',z_2')$ under the map $\Psi$. In these coordinates, $\Psi$ has the form
                \[\Psi:(z_1,z_2)\mapsto(z_1',z_2')=(z_1^2,z_2).\]

            Below, we summarize the deformations of the divisors that we are interested in. When deforming the complex structure away from that of $\F_4$ (when $t\neq 0$):
            \begin{itemize}
                \item The exceptional section $S_{-4}$ disappears.
                \item The fibers $F_z$ deform into spheres $\{z_1=z\}$.
                \item $S_{+4}$ deforms into the sphere $\{z_1^2z_2=\epsilon\}$, since $S_{+4}$ is the preimage of $S_{+2}$ under $\Psi$, and $S_{+2}$ deforms into $\{z_1'z_2'=\epsilon\}$. Here $\epsilon$ is a real number which depends on $t$ (see \cite[\S 3.2]{DA09}).
                \item $S_{-4}$ and two copies of $F_0$ deform into a holomorphic sphere $E_0=\{z_2=\infty\}$, since $E_0$ is the preimage of $E_0'=\{z_2'=\infty\}$, the sphere deformed from $S_{-2}\cup F'_0$ in $\F_2$.
            \end{itemize}
            
            Now we note that, after deformation, the divisor $E_0+F_\infty+S_{+4}$ is no longer anticanonical, since $E_0$ contains two copies of the fiber class.

            We denote the deformed $S_{+4}$ by $D_\epsilon=\{z_1^2z_2=\epsilon\}$, and denote the divisors $\{z_2=0\},\{z_1=0\},\{z_2=\infty\},\{z_1=\infty\}$ by $A_0,B_0,A_\infty,B_\infty$ respectively. In this way, the anticanonical divisor can be chosen as
            \[D_\epsilon-B_0+A_\infty+B_\infty,\]
            \[\text{or}\ \ \ D_\epsilon+\frac{1}{2}A_0+\frac{1}{2}A_\infty.\]
            (The reason we consider these specific divisors is the property stated in Proposition \ref{index formula} below, where we calculate the Maslov index using these divisors.)

            So far, we have discussed the deformation of the complex structure and related divisors. The more challenging aspect compared to the case of $\F_3$ lies in the need to also deform the K\"ahler form and the Lagrangian tori, which is required to further analyze the holomorphic discs.
            
            Now we consider the tori.  We take the tori in the deformed $\F_4$ to be the preimages of the Chekanov tori 
            \begin{equation*}
                \left\{
            \begin{array}{ll}
                |z'_1 z'_2-\epsilon|=r,\\
                |z'_1|^2-|z'_2|^2=\lambda, \\
            \end{array}
            \right.
            \end{equation*}
            in the deformed $\F_2$. In other words, we define a torus $T_{r,\lambda}$ by equations
            \begin{equation}\label{naive torus family}
                \left\{
            \begin{array}{ll}
                |z_1^2z_2-\epsilon|=r,\\
                |z_1|^4-|z_2|^2=\lambda, \\
            \end{array}
            \right.
            \end{equation}
            where $0<r<|\epsilon|$ and $\lambda\in\R$. When $r$ and $\lambda$ vary, this gives a family of tori.

            \begin{remark}
                The deformation of $\F_2$ to $\F_0$ in \cite[\S 3.2]{DA09} is compatible with an $S^1$-action which becomes $(z_1',z_2')\mapsto(e^{\mathrm{i}\theta}z_1',e^{-\mathrm{i}\theta}z_2')$ on $\P^1\times \P^1$. One deforms product tori in $\F_2$ to $S^1$-invariant Lagrangian tori in $\P^1\times\P^1$ (equipped with an $S^1$-invariant K\"ahler form), namely
                \begin{equation*}
                \left\{
            \begin{array}{ll}
                |z'_1 z'_2-\epsilon|=r,\\
                \mu_{S^1}(z_1',z_2')=\lambda/2, \\
            \end{array}
            \right.
            \end{equation*}
                where $\mu_{S^1}$ is the moment map of the $S^1$-action. Lifting by $\Psi$ to the branched double cover (also equipped with an $S^1$-invariant K\"ahler form), we obtain Lagrangian tori, which are invariant under the $S^1$-action $(z_1,z_2)\mapsto(e^{\mathrm{i}\theta}z_1,e^{-2\mathrm{i}\theta}z_2)$, defined by equations \begin{equation*}
                \left\{
            \begin{array}{ll}
                |z_1^2z_2-\epsilon|=r,\\
                \mu_{S^1}(z_1^2,z_2)=\lambda/2, \\
            \end{array}
            \right.
            \end{equation*}
                The equations \eqref{naive torus family} correspond to a specific choice of the K\"ahler form which we explain below.
            \end{remark}

            To get a better sense of how these tori look like, we consider the map 
            \begin{align*}
                f:\C\times\C&\to\C,\\
                (z_1,z_2)&\mapsto z_1^2z_2,
            \end{align*}
            Here we view $\C\times\C$ as the complement of divisors at infinity in $\P^1\times\P^1$. $f$ is a fibration with general fibers $\{z_1^2z_2=c\neq 0\}\cong\C^*$ and a singular fiber $\{z_1^2z_2=0\}$. Then, the torus $T_{r,\lambda}$ defined by \eqref{naive torus family} lives over the circle $\{|z-\epsilon|=r\}$ in $\C$ and intersects each fiber of $f$ (over $\{|z-\epsilon|=r\}$) at a circle.
            Since $r<|\epsilon|$, the circle $\{|z-\epsilon|=r\}$ does not encompass the origin.

            \begin{figure}[htbp]  
            \centering  
            \includegraphics[width=0.8\textwidth]{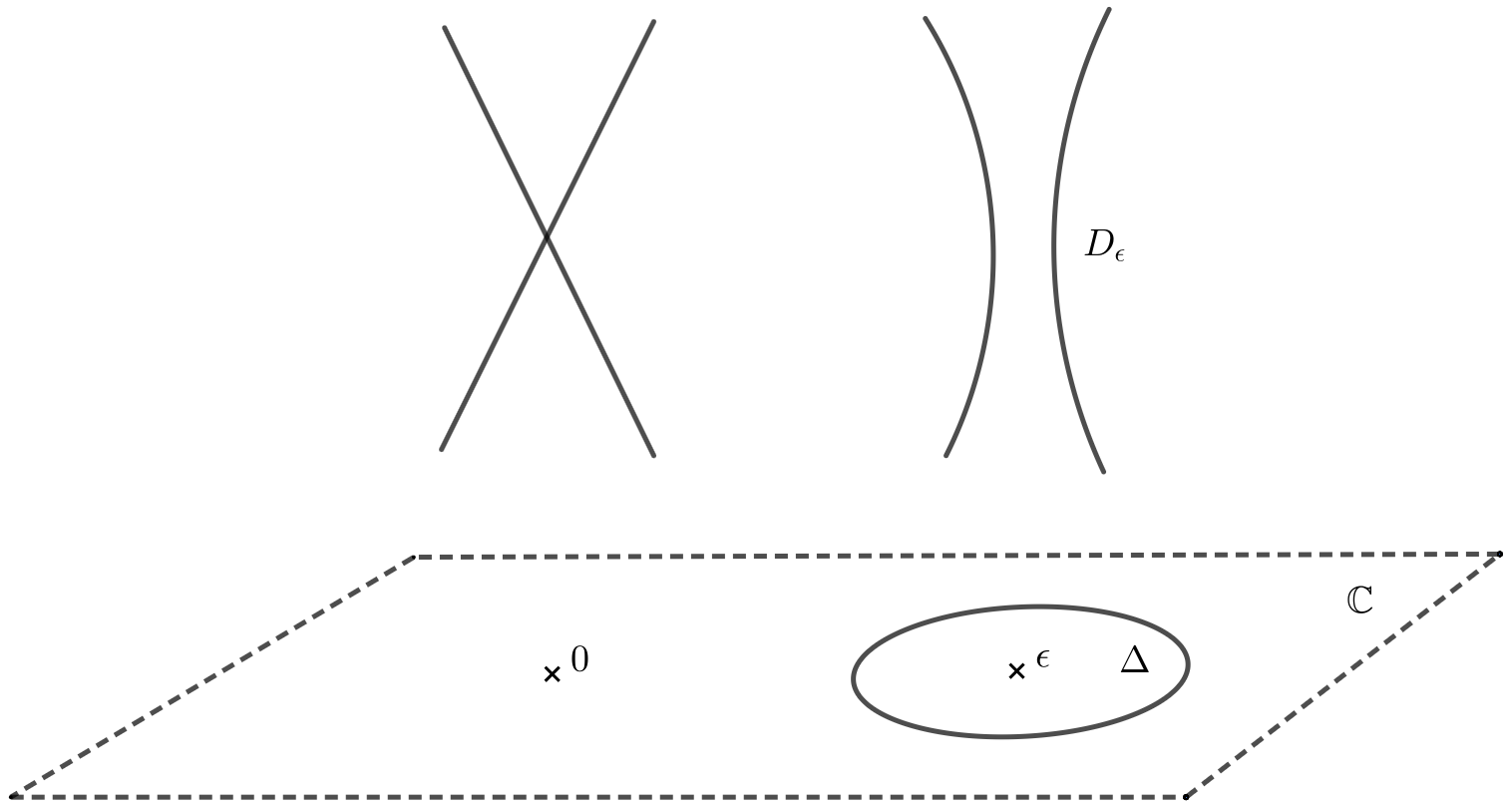}  
            \caption{The map $f:\C\times\C\to\C$ as a fibration}  
            \label{image_5}  
        \end{figure}

            Next, we construct a K\"ahler form on $\P^1\times\P^1$, aiming at making the tori $T_{r,\lambda}$ Lagrangian. A natural attempt is to take the pullback of the K\"ahler form $\omega_\epsilon$ on the deformed $\F_2$ described in \cite[\S 3.2]{DA09}. However, this pullback form $\Psi^*\omega_\epsilon$ degenerates at the branched fibers $B_0=\{z_1=0\}$ and $B_\infty=\{z_1=\infty\}$, since tangent vectors $\partial_{z_1}$ and $\partial_{\bar{z}_1}$ lie in the kernel of $\Psi^*\omega_\epsilon$. Hence, we add a small perturbation term to $\Psi^*\omega_\epsilon$ near $B_0$ and define the form as
            \begin{equation}\label{modified kahler form}
                \Psi^*\omega_\epsilon+\mathrm{i}\delta\partial\bpar(\chi(|z_1|)\mathrm{log}(1+|z_1|^2)).
            \end{equation}
            Here, $\delta>0$ is a small real number, $\chi:\R_{\geq 0}\to[0,1]$ is a smooth bump function such that
            \begin{align*}
                \chi&=1\ \text{on}\ [0,\delta'/2],\\
                \chi&=0\ \text{on}\ [\delta',+\infty),
            \end{align*}
            and $\delta'>0$ is another small real number. We can see that the form \eqref{modified kahler form} is indeed a K\"ahler form on the complement of infinity. Because the additional term takes positive value on $\partial_{z_1},\partial_{\bar{z}_1}$ near zero, and on $|z_1|\in[\delta'/2,\delta']$ this form remains positive by choosing $\delta$ to be small enough. (Moreover, this perturbation preserves the $S^1$-invariance of the K\"ahler form.)

            Likewise, we add a similar term near $B_\infty$, and hence get a K\"ahler form for which the tori $T_{r,\lambda}$ away from supports of the two chosen bump functions are Lagrangian (after a further technical modification of the K\"ahler form, see the second remark in the following).

            \begin{remark}
                For our purpose, we do not need the tori in the form $T_{r,\lambda}$ to be all Lagrangian. Instead, we fix a torus $T_{r,\lambda}$ (fix $r,\lambda$), and investigate holomorphic discs with boundary on this fixed torus. Thus, by taking the supports of the bump functions small enough, we make the torus $T_{r,\lambda}$ Lagrangian. Note that, in this way, the tori $T_{r,\lambda'}$ with $|\lambda'|\leq|\lambda|$ are Lagrangian as well.
            \end{remark}

            \begin{remark}
                A technical issue here is that the K\"ahler form $\omega_\epsilon$ on the deformed $\F_2$ constructed in \cite[\S 3.2]{DA09} does not actually make the Chekanov tori defined by $\{|z_1'z_2'-\epsilon|=r,\ |z_1'|^2-|z_2'|^2=\lambda\}$ Lagrangian. Rather, the tori defined by $\{|z_1'z_2'-\epsilon|=r,\ \mu_{S^1}(z_1',z_2')=\lambda/2\}$ are Lagrangian with respect to $\omega_\epsilon$, where $\mu_{S^1}$ is the moment map of the $S^1$-action $(z_1',z_2')\mapsto(e^{\mathrm{i}\theta}z_1',e^{-\mathrm{i}\theta}z_2')$ ($\omega_\epsilon$ is invariant under this $S^1$-action). 
                
                To address this issue, there is a deformation of K\"ahler forms $\{\omega_\epsilon^{(\tau)},\tau\in[0,1]\}$ satisfying $\omega_\epsilon^{(0)}=\omega_\epsilon$, and that the moment map $\mu^{(1)}_{S^1}$ of the $S^1$-action for $\omega_\epsilon^{(1)}$ coincides with $(z_1',z_2')\mapsto (|z_1'|^2-|z_2'|^2)/2$ for $(z_1',z_2')$ such that \[|z_1'z_2'-\epsilon|\leq r_0,\ \ -\lambda_0\leq|z_1'|^2-|z_2'|^2\leq\lambda_0.\]
                Here $r_0$ is an arbitrary fixed positive number smaller than $|\epsilon|$, and $\lambda_0$ is an arbitrary fixed positive number. In short, $\omega^{(1)}_{\epsilon}$ has the standard, desired moment map over a sufficiently large domain in $\P^1\times\P^1$, making the Chekanov torus we are considering Lagrangian.
                
                For each $\tau\in[0,1]$, $\omega_\epsilon^{(\tau)}$ induces a K\"ahler form on the deformed $\F_4$ as we explained before (by pulling back and perturbing), as well as a family of Lagrangian tori
                \begin{equation*}
                \left\{
            \begin{array}{ll}
                |z_1^2z_2-\epsilon|=r,\\
                \mu^{(\tau)}_{S^1}(z_1^2,z_2)=\lambda/2, \\
            \end{array}
            \right.
            \end{equation*}for $(z_1,z_2)$ in a sufficiently large domain in $\P^1\times\P^1$.

                One can prove the following Proposition \ref{index formula} and Lemma \ref{no index 0 disc for F_4} for each $\omega_{\epsilon}^{(\tau)}$ and see that there is no index $0$ disc bubble in the deformation process. Therefore, we can use the K\"ahler form induced from $\omega^{(1)}_\epsilon$, under which the torus $T_{r,\lambda}$ defined by \eqref{naive torus family} is Lagrangian.
            \end{remark}

            Finally, we return to consider the discs with boundary on $T_{r,\lambda}$. We start with an index formula for the disc classes.

            \begin{proposition}\label{index formula}
                For a class $\beta\in\pi_2(\P^1\times\P^1,T_{r,\lambda})$, the Maslov index of $\beta$ satisfies
                \begin{align*}
                    \mu(\beta)&=2\beta\cdot(D_{\epsilon}-B_0+A_\infty+B_\infty)\\
                    &=2\beta\cdot(D_{\epsilon}+\frac{1}{2}A_0+\frac{1}{2}A_\infty).
                \end{align*}
            \end{proposition}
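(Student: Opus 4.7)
The plan is to invoke the standard Auroux formula (see, e.g., \cite[Lemma 3.1]{DA07}): for any Lagrangian $L$ in a K\"ahler manifold $X$ disjoint from an anticanonical $\Q$-divisor $D$, one has $\mu(\beta) = 2\beta \cdot D$ for every $\beta \in \pi_2(X,L)$. With this in hand, the proposition reduces to verifying that
\[D_1 := D_\epsilon - B_0 + A_\infty + B_\infty, \qquad D_2 := D_\epsilon + \tfrac{1}{2}A_0 + \tfrac{1}{2}A_\infty\]
are both anticanonical $\Q$-divisors whose supports are disjoint from $T_{r,\lambda}$.

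First I would do a bidegree check. Since $K^{-1}_{\P^1\times\P^1}$ has bidegree $(2,2)$, and $D_\epsilon$, $A_i$, $B_i$ have bidegrees $(2,1)$, $(0,1)$, $(1,0)$ respectively, both $D_1$ and $D_2$ total $(2,2)$. For disjointness: on $T_{r,\lambda}$, $|z_1^2 z_2 - \epsilon| = r < |\epsilon|$ forces $z_1^2 z_2$ to lie in the disc $B(\epsilon, r) \subset \C^*$, ruling out $D_\epsilon$, $B_0$, and $A_0$; since $T_{r,\lambda} \subset \C^2$, it also avoids $A_\infty$ and $B_\infty$.

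To establish the first equality I would exhibit the meromorphic $(2,0)$-form
\[\Omega_1 := \frac{z_1\, dz_1 \wedge dz_2}{z_1^2 z_2 - \epsilon}\]
and verify $\mathrm{div}(\Omega_1) = -D_1$ by a coordinate calculation at infinity: in the charts $w_i = 1/z_i$, the factor $dz_1 \wedge dz_2$ contributes $-2A_\infty - 2B_\infty$, while $z_1^2 z_2 - \epsilon$ has pole divisor $2B_\infty + A_\infty$, and these combine with $\mathrm{div}(z_1) = B_0 - B_\infty$ to give $\mathrm{div}(\Omega_1) = -D_1$. Since $\Omega_1$ then restricts to a nonvanishing holomorphic volume form on a neighborhood of $T_{r,\lambda}$, the standard winding-number argument of \cite{DA07} yields $\mu(\beta) = 2\beta \cdot D_1$.

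The second equality then follows from the identity
\[D_1 - D_2 = -B_0 + B_\infty + \tfrac{1}{2}(A_\infty - A_0) = -\tfrac{1}{2}\,\mathrm{div}(z_1^2 z_2),\]
together with the observation that $z_1^2 z_2 : T_{r,\lambda} \to \C^*$ takes values in the contractible disc $B(\epsilon, r)$ and is therefore null-homotopic into $\C^*$, so $\beta \cdot \mathrm{div}(z_1^2 z_2) = 0$ for every $\beta \in \pi_2(\P^1 \times \P^1, T_{r,\lambda})$. The only delicate step is the local divisor computation for $\Omega_1$ in the charts at infinity; the rest is a direct reduction to a known formula.
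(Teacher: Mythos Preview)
Your approach is genuinely different from the paper's. The paper proceeds by direct verification: it fixes a basis $\alpha_0,\beta_0,A,B$ of $\pi_2(\P^1\times\P^1,T_{r,\lambda})$, tabulates the intersection numbers of each basis class with $A_0,B_0,A_\infty,B_\infty,D_\epsilon$, computes each Maslov index by hand (tracking the tangent space along the boundary circle), and checks the two formulas entry by entry. Your argument instead reduces everything to exhibiting a meromorphic volume form and reading off its divisor, and your treatment of the second equality via $\mathrm{div}(z_1^2z_2)$ is cleaner than rechecking it on the basis. The paper's method is more elementary and self-contained; yours is more conceptual and generalizes better.

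There is, however, a gap in your invocation of the Auroux formula. Lemma~3.1 of \cite{DA07} does \emph{not} assert $\mu(\beta)=2\beta\cdot D$ for an arbitrary Lagrangian disjoint from an anticanonical divisor; it requires $L$ to be \emph{special Lagrangian} with respect to the holomorphic volume form $\Omega$ determined by $D$ (equivalently, that the phase $\arg(\Omega|_L/\mathrm{vol}_L)$ is single-valued on $L$). Having $\Omega_1$ nonvanishing near $T_{r,\lambda}$ is not enough: in general the formula acquires an extra term, twice the winding number of the phase of $\Omega|_L$ along $\partial\beta$, and this need not vanish. (For instance, a Chekanov-type torus in $\C^2$ is disjoint from the toric anticanonical divisor but is not special for $dz_1\wedge dz_2$, and the naive formula fails for its index-$2$ disc class $\beta_0$.) You therefore need to check that $T_{r,\lambda}$ is special Lagrangian for $\Omega_1$. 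This follows once you observe that
\[
\Omega_1=\tfrac{1}{2}\,\Psi^*\Omega',\qquad \Omega'=\frac{dz_1'\wedge dz_2'}{z_1'z_2'-\epsilon},
\]
where $\Omega'$ is exactly the form for which the Chekanov-type tori in the deformed $\F_2$ are special Lagrangian (\cite[\S 5]{DA07}); since $T_{r,\lambda}=\Psi^{-1}(\text{Chekanov torus})$ and $\Psi$ is a local biholomorphism away from the branch locus, $T_{r,\lambda}$ is special Lagrangian for $\Omega_1$ and the lemma applies. With this addition your argument is complete.
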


            \begin{proof}
                $\pi_2(\P^1\times\P^1,T_{r,\lambda})$ has a basis $\alpha_0,\beta_0,A,B$. Here $A$ denotes the class of $A_0$ (or equivalently $A_\infty$) and $B$ denotes the class of $B_0$. $\beta_0$ is the class of sections of $f$ over the disc $|z-\epsilon|\leq r$. $\alpha_0$ is the class of Lefschetz thimble associated to the critical value of $f$ (which is the preimage of the Lefschetz thimble described in \cite[\S 5.2]{DA07} under $\Psi$). 
                
                We only need to show that the Maslov index of these basis classes coincide with the corresponding intersection number, which is indeed the case. Results of our computation of intersection and index $\mu$ are summarized in Table \ref{intersection numbers} below, where the empty cells represent zero. Here, the way one computes the Maslov index of a disc is to track the tangent space of the disc along the boundary circle and find the Maslov index of a loop of totally real subspaces.
                \begin{table}[h]
                \centering
                \begin{tabular}{c|ccccc|c}
                    \hline
                     & $A_0$ & $B_0$ & $A_\infty$ & $B_\infty$ & $D_\epsilon$ & $\mu$\\
                    \hline
                    $\alpha_0$ & $-2$ & $1$ & & & & $-2$\\
                    \hline
                    $\beta_0$ & & & & & $1$ & $2$\\
                    \hline
                    $A$ & & $1$ & & $1$ & $2$ & $4$\\
                    \hline
                    $B$ & $1$ & & $1$ & & $1$ & $4$\\
                    \hline
                \end{tabular}
                \caption{Intersection numbers and Maslov indices of classes in $\pi_2(\P^1\times\P^1,T_{r,\lambda})$}
                \label{intersection numbers}
                \end{table}
            \end{proof}

            As mentioned earlier, the divisor $D_\epsilon$ corresponds to $S_{+4}$ in the original picture of $\F_4$ (Figure \ref{image_4}), so $S_{+4}$ is in the class $A+2B$ (by topologically identifying $\F_4$ with $\P^1\times\P^1$). Similarly, $S_{-4}$ is in the class $A-2B$, and the fiber is in the class $B$. In other words, $\sigma=A-2B$, and $\phi=B$. $A,B$ are just substitutes for the notations of these classes.

            Likewise, we also see the disc class $\beta_0$ equals the class $B-\beta_2$ in the original picture of $\F_4$, i.e., $\beta_0$ is another basic class of $\F_4$. This is because a class in $\pi_2(\P^1\times\P^1,T_{r,\lambda})$ is uniquely determined by its intersection numbers with $D_\epsilon,A_0,B_0,A_\infty,B_\infty$, as we can see from Table \ref{intersection numbers}.

            Having matched the homotopy classes in $\pi_2(\P^1\times\P^1,T_{r,\lambda})$ with classes in $\F_4$, we can now look back at index $0$ discs. Proposition \ref{index 0 discs for F_4} tells us that the only classes that may contain index $0$ discs after deformation are $m(2\beta_2+\sigma)=m(A-2\beta_0)$ for $m>0$. This can also been seen from Table \ref{intersection numbers} by applying the argument of positive intersection. 

            Suppose $u:(D^2,\partial D^2)\to(\P^1\times\P^1,T_{r,\lambda})$ is a holomorphic disc in the class $m(A-2\beta_0)$. Since $u$ does not intersect with $D_\epsilon$, $u$ lives over the complement of the disc $\Delta:=\{|z-\epsilon|\leq r\}$ in Figure \ref{image_5} of the fibration $f$. Note that $f:\C\times\C\to\C$ extends to a map
            \begin{align*}
                f:\P^1\times\P^1\setminus \{(0,\infty),(\infty,0)\}&\to\P^1,\\
                (z_1,z_2)&\mapsto z_1^2z_2.
            \end{align*}
            Since $u$ does not intersect $A_0$ or $A_\infty$, $u$ does not pass through $(\infty,0)$ or $(0,\infty)$.

            Define $\bar{u}$ to be the composition $f\circ u$. The fact that $u$ does not intersect with $D_\epsilon$ implies that $\bar{u}$ is a map to $\P^1\setminus\Delta$. In other words,
            \[\bar{u}=f\circ u:(D^2,\partial D^2)\to(\P^1\setminus\Delta,\partial\Delta).\]

            We view $\bar{u}=u_1^2u_2$ as a meromorphic function on $D^2$, where $u_1$ is the $z_1$-component of $u$ and $u_2$ is the $z_2$-component of $u$. Each zero of $\bar{u}$ corresponds to an intersection point of $u$ and $A_0$ or $B_0$. Each pole of $\bar{u}$ corresponds to an intersection point of $u$ and $A_\infty$ or $B_\infty$. In our case, $u$ has $m$ intersections with $B_0$, $m$ intersections with $B_\infty$, and no intersection with $A_0$ or $A_\infty$. Thus, $\bar{u}$ has exactly $m$ double zeros and $m$ double poles. (We allow higher poles like a quadruple pole, which we count as two double poles.)

            \begin{lemma}\label{no index 0 disc for F_4}
                Such a meromorphic function $\bar{u}:(D^2,\partial D^2)\to(\P^1\setminus\Delta,\partial\Delta)$ with $m$ double zeros and $m$ double poles does not exist.
            \end{lemma}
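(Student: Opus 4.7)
The plan is to take a holomorphic square root of $\bar u$ and apply the argument principle. Write $u=(u_1,u_2)$. By Table~\ref{intersection numbers} the class $m(A-2\beta_0)$ has zero intersection with both $A_0=\{z_2=0\}$ and $A_\infty=\{z_2=\infty\}$, so $u_2$ is a holomorphic map $D^2\to\C^*$. As $D^2$ is simply connected, $u_2$ lifts through the squaring cover $\C^*\to\C^*$, giving a holomorphic square root $\sqrt{u_2}$. Setting $h:=u_1\sqrt{u_2}$ produces a meromorphic function on $D^2$ satisfying $h^2=u_1^2u_2=\bar u$, whose divisor is $m$ zeros and $m$ poles (counted with multiplicity, matching those of $u_1$; equivalently, the fact that every zero and every pole of $\bar u$ has even order is exactly what makes $h$ exist).

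Next, I would analyze the boundary behavior. Because $r<|\epsilon|$, the disc $\Delta=\{|z-\epsilon|\le r\}$ avoids $0$, so the squaring map $w\mapsto w^2$ restricts to a trivial double cover over $\Delta$, and the preimage of $\Delta$ decomposes into two disjoint topological discs $\Delta_\pm$ with $\pm\sqrt\epsilon\in\Delta_\pm$, bounded by simple closed curves $\gamma_\pm$. Since $h(\partial D^2)\subset\gamma_+\cup\gamma_-$ and $\partial D^2$ is connected, after relabeling I may assume $h(\partial D^2)\subset\gamma_+$. Moreover, $h^2=\bar u$ avoids $\Delta$ on the interior of $D^2$, so $h(\mathrm{int}\,D^2)$ avoids $\Delta_+\cup\Delta_-$; in particular $h$ never takes the value $-\sqrt\epsilon$ anywhere on $D^2$.

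The contradiction then comes from applying the argument principle to $h+\sqrt\epsilon$. This meromorphic function has exactly $m$ poles (those of $h$), no zeros on $D^2$, and is bounded away from $0$ on $\partial D^2$, so its winding number around $0$ along $\partial D^2$ equals $-m$. On the other hand, this winding number is the winding number of the loop $h|_{\partial D^2}$ about $-\sqrt\epsilon$; since this loop lies in $\gamma_+$ and $-\sqrt\epsilon\notin\Delta_+$, the winding number is $0$. Hence $-m=0$, contradicting $m>0$. The genuinely delicate step is recognizing the square-root trick together with the geometric fact that $-\sqrt\epsilon\notin\Delta_+$ (which is exactly the input $r<|\epsilon|$); everything else is a direct application of the argument principle.
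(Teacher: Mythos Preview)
Your argument is correct, but it proceeds along a different line from the paper's. The paper observes that $\bar u$ is a degree-$2m$ holomorphic map between two discs and applies the Riemann--Hurwitz formula for bordered surfaces: since both domain and target have Euler characteristic $1$, the total ramification must be $2m-1$, whereas the $m$ double zeros and $m$ double poles already contribute $2m$, a contradiction.

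Your route instead extracts a square root $h=u_1\sqrt{u_2}$ from the factorization $\bar u=u_1^2u_2$ (using that $u_2$ misses $0,\infty$), then runs the argument principle against the point $-\sqrt\epsilon$, which $h$ cannot hit because the two sheets $\Delta_\pm$ of the squaring cover over $\Delta$ are separated thanks to $r<|\epsilon|$. This is a legitimately different idea: rather than counting ramification, you exploit the evenness of all zeros and poles to pass to a square root and reduce to a winding-number obstruction. The paper's proof is slightly more self-contained---it uses only the stated properties of $\bar u$ and would apply verbatim to any such meromorphic function---while yours leans on the specific origin of $\bar u$ from the disc $u=(u_1,u_2)$. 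On the other hand, your argument is pleasantly hands-on and makes transparent exactly where the hypothesis $r<|\epsilon|$ enters.
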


            \begin{proof}
                $\bar{u}$ is a degree $2m$ holomorphic map from a disc to another disc. By applying the Riemann-Hurwitz formula for a holomorphic map $(\Sigma,\partial\Sigma)\to(\Sigma',\partial\Sigma')$
                \[\chi(\Sigma')=d\chi(\Sigma)-R\]
                ($d$ is the degree of the map and $R$ is the total ramification) to $\bar{u}$, we have $R(\bar{u})=2m-1$, i.e., the sum of ramification indices of $\bar{u}$ is $2m-1$. However, $R(\bar{u})$ is at least $2m$ since $\bar{u}$ has $m$ double zeros and $m$ double poles, which yields a contradiction.
            \end{proof}

            From the lemma above, we conclude that the holomorphic disc $u:(D^2,\partial D^2)\to(\P^1\times\P^1,T_{r,\lambda})$ in the class $m(A-2\beta_0)$ does not exist. Therefore, there are no index $0$ discs in our chosen perturbation, and no walls are present.

        \subsection{Maslov Index $2$ Holomorphic Discs and Superpotential}

            In the previous subsection, we have seen that the SYZ mirror space of $\F_4$ with respect to the perturbation given by $J_{\F_4}(t)$ is $(\Lambda^*)^2$. In this subsection, we determine the superpotential as an analytic function on $(\Lambda^*)^2$, by counting regular index $2$ discs with boundary on $T_{r,\lambda}$.

            Note that we can assume that $\lambda=0$. Since deforming $\lambda$ to $0$ yields a Lagrangian isotopy from $T_{r,\lambda}$ to $T_{r,0}$ (see the first remark preceding Proposition \ref{index formula}, and note that none of these Lagrangians bound index $0$ discs) without intersecting any of the divisors $D_\epsilon,A_0,B_0,A_\infty,B_\infty$, the count of holomorphic discs that satisfy an intersection condition with these divisors stays the same.

            Making use of Proposition \ref{index formula}, we have the following.

            \begin{proposition}\label{index 2 discs for F_4}
                The only classes in $\pi_2(\P^1\times\P^1,T_{r,\lambda})$ that may contain index $2$ discs after perturbation are\begin{align*}
                    \beta_0+m(A-2\beta_0),\\
                    (B-\beta_0)+m(A-2\beta_0),\\
                    (2B+\alpha_0-2\beta_0)+m(A-2\beta_0),\\
                    (A-\alpha_0-2\beta_0)+m(A-2\beta_0),
                \end{align*}
                where $m$ takes value in $\Z_{\geq 0}$.
            \end{proposition}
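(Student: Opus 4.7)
The plan is to mimic the intersection-and-Maslov-index bookkeeping used in Proposition \ref{index 0 discs for F_4}, but now in the basis $\alpha_0,\beta_0,A,B$ of $\pi_2(\P^1\times\P^1,T_{r,\lambda})$ adapted to the deformed picture. I write a disc class as $\beta=a\alpha_0+b\beta_0+cA+dB$ and read off the intersection numbers with the five holomorphic divisors $A_0,B_0,A_\infty,B_\infty,D_\epsilon$ directly from Table \ref{intersection numbers}. The Maslov index formula of Proposition \ref{index formula} becomes $\mu(\beta)/2=-a+b+2c+2d$, which I set equal to $1$.

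For the positivity constraints I argue as in Proposition \ref{wall of F_3}: if stable discs in class $\beta$ survive for a sequence of perturbations approaching the chosen one, Gromov compactness produces a $J_{\F_4}(0)$-holomorphic stable disc, which cannot be contained in any of the closed holomorphic curves $A_0,B_0,A_\infty,B_\infty,D_\epsilon$ (its boundary lies on the Lagrangian $T_{r,\lambda}$), so every intersection number with these divisors must be non-negative. This gives the five linear inequalities
\begin{align*}
-2a+d&\geq 0,\qquad a+c\geq 0,\qquad d\geq 0,\\
c&\geq 0,\qquad b+2c+d\geq 0,
\end{align*}
together with $-a+b+2c+2d=1$.

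Substituting $b=1+a-2c-2d$ into the fifth inequality collapses it to $d\leq 1+a$, so combined with $d\geq 2a$ and $d\geq 0$ the only admissible values of $(a,d)$ are $(0,0)$, $(0,1)$, $(1,2)$ and $(-1,0)$. For each of these the remaining inequality $c\geq\max(0,-a)$ leaves $c$ as the single free non-negative parameter, which I call $m$ (shifted by $1$ when $a=-1$), and then $b$ is forced. Reading off the four one-parameter families recovers exactly the list in the statement.

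The only genuine subtlety is the Gromov-compactness step above: one must check that the divisors $A_0,B_0,A_\infty,B_\infty,D_\epsilon$ remain honest holomorphic curves throughout the chosen family $J_{\F_4}(t)$, so that positivity of intersection applies to each limiting stable disc. This is immediate from the explicit description of the deformation recorded before Proposition \ref{index formula}, so after this remark the rest of the proof is the routine integer-programming computation sketched above.
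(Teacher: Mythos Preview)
Your proof is correct and follows exactly the paper's approach: set up the five non-negativity constraints from Table \ref{intersection numbers} together with the Maslov index equation, then solve the resulting integer program. You in fact carry out the case analysis more explicitly than the paper does; the only superfluous step is the Gromov-compactness detour to $J_{\F_4}(0)$, since the divisors $A_0,B_0,A_\infty,B_\infty,D_\epsilon$ are already honest holomorphic curves for the fixed perturbed structure and positivity of intersection applies directly there.
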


            \begin{remark}
                Since we have matched the classes in $\pi_2(\P^1\times\P^1,T_{r,\lambda})$ with classes in $\pi_2(\F_4,L)$ in the discussion following Proposition \ref{index formula}, we can see that the classes $\beta_0$, $B-\beta_0$, $2B+\alpha_0-2\beta_0$, and $A-\alpha_0-2\beta_0$ correspond exactly to the four basic classes in $\F_4$.
            \end{remark}

            \begin{proof}
                One can prove the result for corresponding classes in $\pi_2(\F_4,L)$, in the same way as in Proposition \ref{index 0 discs for F_4}. Alternatively, we can make use of Table \ref{intersection numbers} and prove the result directly for classes in $\pi_2(\P^1\times\P^1,T_{r,\lambda})$, as follows.

                Suppose $T_{r,\lambda}$ bounds an index $2$ stable disc in the class $n_1\alpha_0+n_2\beta_0+n_3A+n_4B$, and it deforms into a holomorphic disc after perturbation. It should have non-negative intersection with the spheres $A_0,B_0,A_\infty,B_\infty,D_\epsilon$. We have\begin{align*}
                    -2n_1+n_4&\geq0,\\
                    n_1+n_3&\geq0,\\
                    n_4&\geq0,\\
                    n_3&\geq0,\\
                    n_2+2n_3+n_4&\geq0,\\
                    -n_1+n_2+2n_3+2n_4&=1.
                \end{align*}
                The proposition follows directly from these relations.
            \end{proof}

            Then, we need to find the contribution of these classes to the superpotential, which amounts to finding the count of regular holomorphic discs in these classes. More precisely, the disc count is the number of holomorphic discs in the class whose boundary passes through a generic point in $T_{r,\lambda}$.

            We begin with the classes $(B-\beta_0)+m(A-2\beta_0)$, for which the result is stated in Proposition \ref{disc count}. Before proving that, we need a technical lemma from complex analysis.

            \begin{lemma}\label{bar u}
                There exists a meromorphic function $\bar{u}:(D^2,\partial D^2)\to(\P^1\setminus\Delta,\partial\Delta)$ with exactly $1$ simple zero, $1$ simple pole, $m$ double zeros, and $m$ double poles. $\bar{u}$ is unique up to an automorphism of the domain. ($\Delta$ is a disc in $\P^1$ that does not contain $0$ or $\infty$.)
            \end{lemma}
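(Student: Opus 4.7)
The plan is to reformulate the problem in terms of finite Blaschke products and then invoke the classification of branched covers by monodromy. Let $\phi\colon\P^1\setminus\Delta\to\mathbb{D}$ be a conformal isomorphism with the unit disc, set $p=\phi(0)$ and $q=\phi(\infty)$, and write $\tilde u = \phi\circ\bar u$. Then $\tilde u\colon(D^2,\partial D^2)\to(\mathbb{D},\partial\mathbb{D})$ is a proper holomorphic map of degree $d=2m+1$, i.e.\ a finite Blaschke product of degree $d$, whose preimages of $p$ and of $q$ each have multiplicity profile $(1,2,\ldots,2)$ of length $m+1$. By Riemann--Hurwitz for proper maps of discs, the identity $\chi(D^2)=d\,\chi(\mathbb{D})-R$ forces the total ramification $R=d-1=2m$, which is already accounted for by the $m$ double zeros and $m$ double poles; hence there are no further critical points of $\tilde u$.

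For existence I would construct the cover topologically and then supply it with a pullback complex structure. Choose $\alpha,\beta\in S_d$, each of cycle type $(1,2^m)$, such that $\alpha\beta$ is a single $d$-cycle; a concrete choice is $\alpha=(2,3)(4,5)\cdots(2m,2m+1)$ and $\beta=(1,2)(3,4)\cdots(2m-1,2m)$, for which a direct computation gives $\alpha\beta=(1,3,5,\ldots,2m+1,2m,2m-2,\ldots,2)$. Treating $(\alpha,\beta)$ as the monodromies of $\pi_1(\mathbb{D}\setminus\{p,q\})$ around $p$ and $q$ produces a connected branched cover $\pi\colon X\to\mathbb{D}$; Riemann--Hurwitz gives $\chi(X)=d-2m=1$, and the single-cycle property of $\alpha\beta$ forces $\partial X$ to be connected, so $X$ is topologically a disc. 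Pulling back the complex structure on $\mathbb{D}$ (smooth away from branch points, and extending locally as $w\mapsto w^k$ at each ramification point) turns $X$ into a Riemann surface with boundary, which by the Riemann mapping theorem is conformally $D^2$; the induced map furnishes the desired $\tilde u$.

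For uniqueness, two proper holomorphic maps $\tilde u_1,\tilde u_2$ of this type define branched covers of $\mathbb{D}$ with identical branching data, so they are isomorphic precisely when their monodromy pairs are simultaneously conjugate in $S_d$, in which case $\tilde u_2=\tilde u_1\circ\psi$ for some $\psi\in\Aut(D^2)$. Thus it remains to verify the combinatorial statement that any pair $(\alpha',\beta')\in S_d^2$ of cycle type $(1,2^m)$ with $\alpha'\beta'$ a $d$-cycle is simultaneously conjugate to the reference pair above. After conjugating so that $\alpha'\beta'$ equals a fixed $d$-cycle $\sigma$, such factorizations form a finite set on which the centralizer $C_{S_d}(\sigma)=\langle\sigma\rangle$ acts, and I would argue that this action is transitive by relabeling the indices $\{1,\ldots,d\}$ along $\sigma$ starting from the fixed point of $\alpha'$, using the cycle profile to recover $\alpha'$ and $\beta'$ in a canonical form.

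The main obstacle is this last combinatorial step. Generic Hurwitz problems with multiple branch points typically decompose into several orbits, so the rigidity here relies essentially on the combination of the specific cycle profile $(1,2^m)$ together with the single-cycle condition on the product, and verifying uniqueness from these two inputs alone must be carried out with care.
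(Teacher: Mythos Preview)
Your approach is correct and takes a genuinely different route from the paper's. The paper works directly at the topological level without monodromy: it connects $0$ and $\infty$ by an arc $\gamma$ in $\P^1\setminus\Delta$ and observes that $\bar u^{-1}(\gamma)$ is a single arc in $D^2$ whose endpoints are the simple zero and the simple pole and whose interior passes through the double points with zeros and poles alternating; any two such configurations are related by a self-homeomorphism of $D^2$, which gives topological uniqueness. Existence is by an explicit cut-and-paste picture, and the upgrade from topological to holomorphic is the same pullback-and-uniformize step you outline. Your monodromy formulation is more systematic and plugs into Hurwitz theory; the paper's argument is more hands-on but sidesteps the group theory entirely.

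Regarding your ``main obstacle'', it has a short resolution that you may be circling around. Since $\alpha',\beta'$ are involutions with $\alpha'\beta'=\sigma$, inverting gives $\beta'\alpha'=\sigma^{-1}$, hence $\alpha'\sigma\alpha'^{-1}=\alpha'(\alpha'\beta')\alpha'=\beta'\alpha'=\sigma^{-1}$; thus $\alpha'$ (and likewise $\beta'$) is a reflection in the dihedral group $\langle\sigma,\alpha'\rangle\cong D_d$. For odd $d=2m+1$ each reflection has a unique fixed point and is determined by it, so after conjugating by the power of $\sigma$ that moves the fixed point of $\alpha'$ to a prescribed position, both $\alpha'$ and $\beta'=\alpha'\sigma$ are forced. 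This dihedral picture is precisely the algebraic shadow of the paper's $\gamma$-argument: the alternation of zeros and poles along $\bar u^{-1}(\gamma)$ is the geometric statement that $\alpha'$ is the reflection $i\mapsto -i$ along the cycle $\sigma$.
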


            \begin{proof}
                \textbf{Claim.} At the topological level, the map $\bar{u}$ exists and is unique. 
                
                In other words, there exists a branched covering map $\bar{u}$ from the topological disc $(D^2,\partial D^2)$ to $(\P^1\setminus\Delta,\partial\Delta)$, such that $\bar{u}^{-1}(0)=\{a_0,\cdots,a_m\}$, $\bar{u}^{-1}(\infty)=\{b_0,\cdots,b_m\}$, and $a_1,\cdots,a_m,b_1,\cdots,b_m$ are (topologically standard) double branched points, giving all the branched points of $\bar{u}$. Moreover, $\bar{u}$ is unique up to a self-homeomorphism of $(D^2,\partial D^2)$.

                We now prove the claim. The existence of $\bar{u}$ in the case of $m=1$ follows from Figure \ref{image_7} below, which uses the cut-and-paste construction to obtain a branched covering map with standard double branched points at $a_1$ and $b_1$. For general $m$, one has a similar picture where $a_0$, $b_m$, $a_1$, $b_{m-1}$, ..., $a_m$, $b_0$ appear sequentially along a curve in $D^2$, and a similar cut-and-paste construction.

                \begin{figure}[htbp]  
            \centering  
            \includegraphics[width=0.7\textwidth]{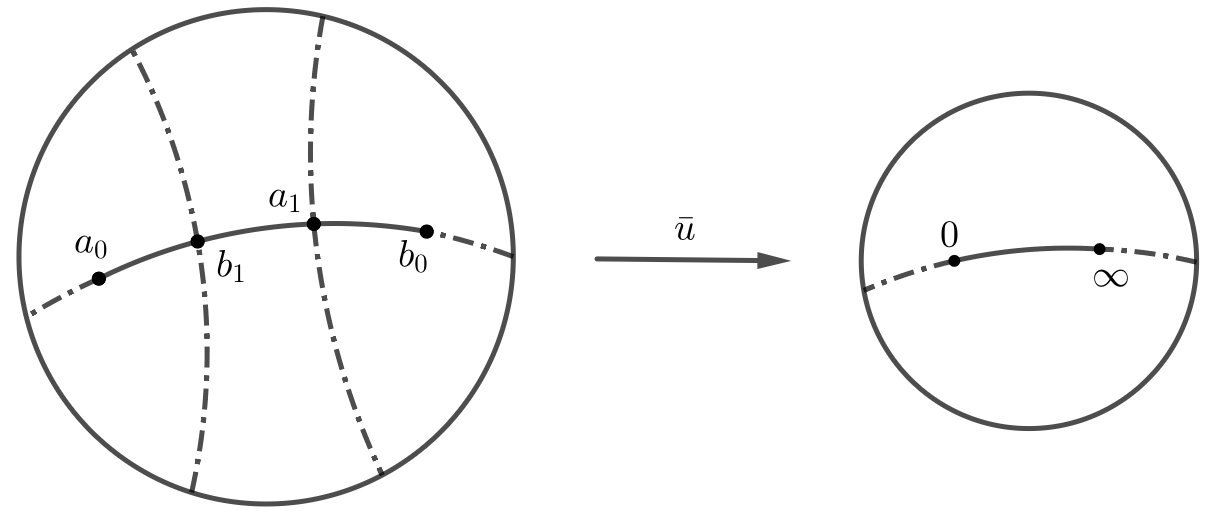}  
            \caption{Construction of the branched covering map $\bar{u}$}  
            \label{image_7}  
        \end{figure}

                For the topological uniqueness of $\bar{u}$, we need the following observation. Connect $0$ and $\infty$ in $\P^1\setminus\Delta$ with a curve $\gamma$. Consider the lift $\bar{u}^{-1}(\gamma)$ of $\gamma$ in $D^2$. $\bar{u}^{-1}(\gamma)$ is a curve with end points given by the simple zero and the simple pole, and $\bar{u}^{-1}(\gamma)$ passes through all the double zeros and double poles, where zeros and poles appear alternatively.
                
                If we have two such maps $\bar{u}_1,\bar{u}_2$, after composing a homeomorphism on the domain, we can make the curves $\bar{u}_1^{-1}(\gamma)$ and $\bar{u}_2^{-1}(\gamma)$ agree as subsets of the domain, and also make the point set $\bar{u}_1^{-1}(0)$ agree with $\bar{u}_2^{-1}(0)$, $\bar{u}_1^{-1}(\infty)$ agree with $\bar{u}_2^{-1}(\infty)$. Elongate the curve $\gamma$ (in $\P^1\setminus\Delta$) beyond $0$ and $\infty$ and get the dotted segments in Figure \ref{image_7}. We can further make the preimages of the dotted segments agree after composing a homeomorphism on the domain. Then, the uniqueness of $\bar{u}$ is already clear.

                We now return to the lemma.

                \textbf{Step 1.} Existence.

                Take a topological branched covering map $\bar{u}$ from the topological disc $(D^2,\partial D^2)$ to $(\P^1\setminus\Delta,\partial\Delta)$, satisfying our conditions on zeros and poles. $\bar{u}$ is an honest covering map restricted to
                \[D^2\setminus\{a_0,\cdots,a_m,b_0,\cdots,b_m\}\to(\P^1\setminus\Delta)\setminus\{0,\infty\}.\]

                By pulling back the standard complex structure on $\P^1\setminus\Delta$, we get a complex structure on $D^2\setminus\{a_1,\cdots,a_m,b_1,\cdots,b_m\}$. We extend this complex structure to the entire $D^2$ as follows. 

                Since $a_1$ is a topologically standard double branched point, there exists topological embeddings $\varphi:D(\delta)\to D^2$ sending $0$ to $a_1$, and $h:D(\delta^2)\to \P^1\setminus\Delta$ sending $0$ to $0$, such that $h^{-1}\circ \bar{u}\circ\varphi(z)=z^2$. Here $D(\delta)$ denotes an open radius $\delta$ disc centered at $0$, equipped with the standard complex structure restricted from $\C$.
                \[\begin{tikzcd}
	{D(\delta)} & {D^2} \\
	{D(\delta^2)} & {\P^1\setminus\Delta}
	\arrow["\varphi", hook, from=1-1, to=1-2]
	\arrow["{z\mapsto z^2}"', two heads, from=1-1, to=2-1]
	\arrow["{\bar{u}}", two heads, from=1-2, to=2-2]
	\arrow["h"', hook, from=2-1, to=2-2]
\end{tikzcd}\]

                By the Riemann mapping theorem, $h$ can be chosen to be a holomorphic map. More precisely, there exists a map $h':D(\delta^2)\to\P^1\setminus\Delta$ sending $0$ to $0$, such that $h'$ is a biholomorphism onto $h(D(\delta^2))$. Then we lift $h'$ to a continuous map $\varphi'$ such that the following diagram commutes (this lifting is a purely topological construction).\[\begin{tikzcd}
	{D(\delta)} & {D^2} \\
	{D(\delta^2)} & {\P^1\setminus\Delta}
	\arrow["\varphi'", hook, from=1-1, to=1-2]
	\arrow["{z\mapsto z^2}"', two heads, from=1-1, to=2-1]
	\arrow["{\bar{u}}", two heads, from=1-2, to=2-2]
	\arrow["h'"', hook, from=2-1, to=2-2]
\end{tikzcd}\]

                \textbf{Claim.} $\varphi':D(\delta)\to D^2$ is a holomorphic chart for $D^2$ which is compatible with the pullback complex structure away from the branched points.

                In fact, the composition $\bar{u}\circ \varphi'=h'\circ (z\mapsto z^2)$ is holomorphic, which implies the compatibility of the chart $(D(\delta),\varphi')$.

                Similarly, there are holomorphic charts around other branched points. These data give a well-defined complex structure of $D^2$ such that $\bar{u}$ is holomorphic. Since the complex structure on the disc $D^2$ is unique, the existence part is proved.

                \textbf{Step 2.} Uniqueness.

                Suppose there are two holomorphic maps $\bar{u}_1,\bar{u}_2:(D^2,\partial D^2)\to(\P^1\setminus\Delta,\partial\Delta)$ satisfying the condition. By the topological uniqueness of such a map, there exists a homeomorphism $h:(D^2,\partial D^2)\to(D^2,\partial D^2)$ such that $\bar{u}_1=\bar{u}_2\circ h$.
                $h$ is biholomorphic away from the branched points. In the following, we prove that $h$ is also biholomorphic near the branched points.

                Suppose that $a,a'\in D^2$ are branched points of $\bar{u}_1$ and $\bar{u}_2$ respectively and $h(a)=a'$. In holomorphic charts centered at $a$ and $a'$, $\bar{u}_1$ has the form $z\mapsto w=z^2$, and $\bar{u}_2$ has the form $z'\mapsto w'=z'^2$. Since $w$ and $w'$ are both coordinates centered at $0$ on $\P^1\setminus\Delta$, there exists a biholomorphism $\varphi$ defined on a neighborhood of $0$ such that $w'=\varphi(w)$, i.e., $z'^2=\varphi(z^2)$.

                Hence, by taking the square root of the function $\varphi(z^2)$, we get a biholomorphism from a neighborhood of $a$ to that of $a'$, which match with $h$. The uniqueness of $\bar{u}$ follows.
            \end{proof}

            Now we return to the disc count for the class $(B-\beta_0)+m(A-2\beta_0)$. We assume $\lambda=0$ for the rest of this subsection.

            \begin{proposition}\label{disc count}
                The torus $T_{r,0}$ bounds a unique $S^1$-family of regular holomorphic discs in the class $(B-\beta_0)+m(A-2\beta_0)$, the disc count of which is $2m+1$.
            \end{proposition}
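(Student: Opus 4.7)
The plan is to reduce counting holomorphic discs in the class $(B-\beta_0) + m(A-2\beta_0) = mA + B - (2m+1)\beta_0$ to the factorization problem $\bar{u} = u_1^2 u_2$ analyzed in Lemma \ref{bar u}. First I would read off from Table \ref{intersection numbers} that a disc $u = (u_1, u_2)$ in this class has intersection numbers $(A_0, B_0, A_\infty, B_\infty, D_\epsilon) = (1, m, 1, m, 0)$. Since $B_0 = \{z_1 = 0\}$ and $B_\infty = \{z_1 = \infty\}$ contribute double zeros and double poles to $\bar{u} = f \circ u$, whereas $A_0 = \{z_2 = 0\}$ and $A_\infty = \{z_2 = \infty\}$ contribute simple ones, the composition $\bar{u}: (D^2, \partial D^2) \to (\P^1 \setminus \Delta, \partial \Delta)$ is precisely the meromorphic map with $m$ double zeros, one simple zero, $m$ double poles, and one simple pole considered in Lemma \ref{bar u}. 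That lemma then gives existence and uniqueness of $\bar{u}$ up to reparametrization of $D^2$.

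Next I would reconstruct $(u_1, u_2)$ from $\bar{u}$ to establish uniqueness of the $S^1$-family. The intersection data force $u_1$ to have simple zeros at the $m$ double zeros of $\bar{u}$ and simple poles at the $m$ double poles, so $u_1 = R(z)\, g(z)$ for a fixed rational function $R$ with these prescribed zeros and poles and some nowhere-vanishing holomorphic $g: D^2 \to \C^*$; the factor $u_2 = \bar{u}/u_1^2$ then automatically has the required simple zero and simple pole. The boundary condition $u(\partial D^2) \subset T_{r,0}$ (with $\lambda = 0$) reduces to $|u_1|^2 = |u_2|$ on $\partial D^2$, which translates to a prescribed boundary value for the harmonic function $\log|g|$. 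By the Dirichlet problem, $|g|$ is uniquely determined on $D^2$, so $g$ is determined up to a constant in $U(1)$; this ambiguity corresponds exactly to the $S^1$-action $(z_1, z_2) \mapsto (e^{\mathrm{i}\theta} z_1, e^{-2\mathrm{i}\theta} z_2)$, yielding a unique $S^1$-family of discs. Regularity of each disc follows from a dimension count: the actual dimension of the moduli of parametrized maps is $3 + 1 = 4$ (three from $\mathrm{Aut}(D^2)$ and one from the $S^1$-family), matching the Fredholm index $n + \mu = 4$.

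For the final disc count I would count members of the $S^1$-family whose boundary passes through a generic point $p \in T_{r,0}$. The $S^1$-action preserves each fiber of $f$, and $T_{r,0}$ meets each such fiber in a single circle, namely the orbit $S^1 \cdot p$. Hence the discs through $p$ correspond to intersections of $\partial u$ (for a fixed representative of the family) with $S^1 \cdot p$. Since the boundary map $\bar{u}|_{\partial D^2}: \partial D^2 \to \partial\Delta$ has degree equal to $\deg \bar{u} = 2m+1$, the generic point $f(p) \in \partial\Delta$ has exactly $2m+1$ preimages in $\partial u$, all of which lie in $f^{-1}(f(p)) \cap T_{r,0} = S^1 \cdot p$. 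This produces $2m+1$ discs through $p$, as claimed. The main obstacle I anticipate is the harmonic-extension step in paragraph two, where one must carefully verify that a prescribed boundary modulus together with a prescribed divisor of zeros and poles pin down the holomorphic factor $u_1$ up to a $U(1)$-constant, relying on both the Dirichlet problem and the existence of a global harmonic conjugate on the simply connected disc.
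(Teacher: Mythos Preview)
Your reduction to Lemma~\ref{bar u} and your reconstruction of $(u_1,u_2)$ from $\bar u$ are correct.  The Dirichlet-problem argument for pinning down $u_1$ up to a $U(1)$ factor is a valid alternative to the paper's approach, which instead introduces the auxiliary function $\tilde u=u_1^2/u_2$ (satisfying $|\tilde u|=1$ on $\partial D^2$) and writes it explicitly as a product of Blaschke factors; both routes arrive at the same $S^1$-family.  Your computation of the absolute value $2m+1$ of the count via the degree of $\bar u|_{\partial D^2}$ is likewise the same as the paper's.

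The genuine gap is in your regularity step.  Knowing that the moduli space of parametrized maps is set-theoretically $4$-dimensional and that the Fredholm index is $4$ does \emph{not} imply that $D_{\bpar_J,u}$ is surjective.  It is entirely possible for $\ker D_{\bpar_J,u}$ to have dimension $4+c$ and $\coker D_{\bpar_J,u}$ to have dimension $c>0$; in that situation the Kuranishi obstruction map may still cut the moduli space down to the expected dimension, yet the disc is not regular and the naive count need not agree with the virtual count.  Regularity has to be checked directly.  The paper does this by using the fibration $f$ to produce a short exact sequence of bundle pairs with the fiberwise piece $(u^*T(f^{-1}(z)),\,u^*T(f^{-1}(z)\cap T_{r,0}))\cong(\underline{\C},\underline{\R})$ on top and a base piece $(\mathcal L_u,\bar u^*T(\partial\Delta))$ of degree $1$ on the bottom; surjectivity of $\bpar$ on both pieces then forces surjectivity in the middle.

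A smaller omission is the sign of the count: you compute $|2m+1|$ but do not argue it is $+(2m+1)$.  The paper handles this by observing that the same exact-sequence splitting makes the orientation of the moduli space independent of $m$, so positivity for the basic class $m=0$ propagates to all $m$.
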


            \begin{proof}
                \textbf{Step 1.} Uniqueness.
                
                Suppose $u:(D^2,\partial D^2)\to(\P^1\times\P^1,T_{r,\lambda})$ is a holomorphic disc in the class $(B-\beta_0)+m(A-2\beta_0)$. By Table \ref{intersection numbers}, $u$ has $1$ intersection with both $A_0$ and $A_\infty$, and has $m$ intersections with both $B_0$ and $B_\infty$. $u$ does not intersect with $D_\epsilon$, and hence does not pass through $(0,\infty)$ or $(\infty,0)$ where $f$ is not defined. 

                As a result, $u$ lives over $\P^1\setminus\Delta$, and the meromorphic function $\bar{u}=f\circ u=u_1^2u_2:(D^2,\partial D^2)\to(\P^1\setminus\Delta,\partial\Delta)$ has $1$ simple zero $a_0$, $m$ double zeros $a_1,\cdots,a_m$, and $1$ simple pole $b_0$, $m$ double poles $b_1,\cdots,b_m$. Applying the Riemann-Hurwitz formula as in Lemma \ref{no index 0 disc for F_4}, we see that it is not possible for these zeros or poles to merge into higher-order zeros or poles. By Lemma \ref{bar u}, $\bar{u}$ is unique up to an automorphism of the domain.

                Consider another meromorphic function $\Tilde{u}:=u_1^2/u_2$ on $D^2$. Since $|z_1|^4-|z_2|^2=\lambda=0$ on the torus $T_{r,0}$, we have $|\Tilde{u}|=1$ on the boundary $\partial D^2$. We completely understand the zeros and poles of $\Tilde{u}$ since we understand those of $u_1$ and $u_2$.
                Hence, $\Tilde{u}(z)$ must be of the form
                \[e^{\mathrm{4i\theta}}\cdot\left(\frac{z-a_0}{1-\bar{a}_0z}\right)^{-1}\cdot\left(\frac{z-a_1}{1-\bar{a}_1z}\right)^2\cdot\cdots\cdot\left(\frac{z-a_m}{1-\bar{a}_mz}\right)^2\cdot\left(\frac{z-b_0}{1-\bar{b}_0z}\right)^1\cdot\left(\frac{z-b_1}{1-\bar{b}_1z}\right)^{-2}\cdot\cdots\cdot\left(\frac{z-b_m}{1-\bar{b}_mz}\right)^{-2}.\]
                In other words, $\Tilde{u}$ is unique up to a factor $e^{4i\theta}$. We denote by $\Tilde{u}_0(z)$ the expression
                \[\left(\frac{z-a_0}{1-\bar{a}_0z}\right)^{-1}\cdot\left(\frac{z-a_1}{1-\bar{a}_1z}\right)^2\cdot\cdots\cdot\left(\frac{z-a_m}{1-\bar{a}_mz}\right)^2\cdot\left(\frac{z-b_0}{1-\bar{b}_0z}\right)^1\cdot\left(\frac{z-b_1}{1-\bar{b}_1z}\right)^{-2}\cdot\cdots\cdot\left(\frac{z-b_m}{1-\bar{b}_mz}\right)^{-2}.\]

                Then, $u_1$ is a fourth root of $\bar{u}\cdot\Tilde{u}$, i.e.,
                \[u_1=e^{\mathrm{i}\theta}(\bar{u}\Tilde{u}_0)^{1/4},\]
                and $u_2$ is determined by $u_2=\bar{u}/u_1^2$. This determines a unique $S^1$-family of $u=(u_1,u_2)$ parametrized by $\theta$. Note that different choices of the fourth root are reflected in a change in $\theta$.

                \textbf{Step 2.} Existence.
                
                In fact, the existence is already clear from the proof of the uniqueness. We define $\bar{u}$ by Lemma \ref{bar u} and define $\Tilde{u}_0$ as before. Then $u=(u_1,u_2)$ can be defined as before: $u_1=e^{\mathrm{i}\theta}(\bar{u}\Tilde{u}_0)^{1/4}$, $u_2=\bar{u}/u_1^2$. This gives us the desired family of holomorphic discs.

                \textbf{Step 3.} Regularity.

                We need to prove that the above constructed disc is regular, which means the linearized operator \[D_{\bpar_{J},u}:\Omega^0(u^*T(\P^1\times\P^1),u^*T(T_{r,0}))\to\Omega^{0,1}(u^*T(\P^1\times\P^1))\]
                associated to the corresponding moduli space is surjective. Here $\Omega^0(u^*T(\P^1\times\P^1),u^*T(T_{r,0}))$ denotes the sections of $u^*T(\P^1\times\P^1)$ over $D^2$ whose restriction to $\partial D^2$ lies in $u^*T(T_{r,0})$. We assume that suitable Sobolev completions have been chosen.

                Consider the commutative diagram\[\begin{tikzcd}
	0 & 0 \\
	{\Omega^0(u^*T(f^{-1}(z)),u^*T(f^{-1}(z)\cap T_{r,0}))} & {\Omega^{0,1}(u^*T(f^{-1}(z)))} \\
	{\Omega^0(u^*T(\P^1\times\P^1),u^*T(T_{r,0}))} & {\Omega^{0,1}(u^*T(\P^1\times\P^1))} \\
	{\Omega^0(\mathcal{L}_u,\bar{u}^*T(\partial\Delta))} & {\Omega^{0,1}(\mathcal{L}_u)} \\
	0 & 0
	\arrow[from=1-1, to=2-1]
	\arrow[from=1-2, to=2-2]
	\arrow[from=2-1, to=2-2]
	\arrow[from=2-1, to=3-1]
	\arrow[from=2-2, to=3-2]
	\arrow["{\bar{\partial}}", from=3-1, to=3-2]
	\arrow["df", from=3-1, to=4-1]
	\arrow[from=3-2, to=4-2]
	\arrow[from=4-1, to=4-2]
	\arrow[from=4-1, to=5-1]
	\arrow[from=4-2, to=5-2]
\end{tikzcd}\]where the sections of the sheaf $\mathcal{L}_u$ are sections of the pullback bundle $\bar{u}^*T\C=(f\circ u)^*T\C$ that have zeros at $a_1,\cdots,a_m,b_1,\cdots,b_m$. $\mathcal{L}_u$ is isomorphic to $\bar{u}^*T\C(-a_1-\cdots-a_m-b_1-\cdots-b_m)$. $f^{-1}(z)=\{z_1^2z_2=z\}$ is a fiber of $f$ over $z\in\partial\Delta$.

                One can check that the columns are exact sequences. The reason for twisting $\bar{u}^*T\C$ to be $\mathcal{L}_u$ is that the differential $df$ is vanishes at $\{z_1=0\}$ and $\{z_1=\infty\}$, i.e., the double zeros and the double poles of $\bar{u}$. Thus, we twist the target sheaf to make $df$ surjective.

                In order to show the Dolbeault operator in the middle row is surjective, it suffices to show those of the upper row and the lower row are surjective. The upper one is surjective since the bundle pair $(u^*T(f^{-1}(z)),u^*T(f^{-1}(z)\cap T_{r,0}))$ is isomorphic to the trivial pair $(\underline{\C},\underline{\R})$ (see \cite[\S C.1]{J-curve}).

                We now show that the lower one is surjective as well. The degree (relative Chern number) of $(\bar{u}^*T\C,\bar{u}^*T(\partial\Delta))$ is the degree of $\bar{u}$, which is $2m+1$. Twisting once (multiplying all the sections by $z-a_1$ for example) will decrease the degree by $1$. Hence, the twisted bundle pair has degree $1$, and the corresponding Dolbeault operator is automatically surjective.

                \textbf{Step 4.} Disc count.

                In fact, the disc count is the number of values of $\theta$ for which the boundary of $u$ passes through a given point of $T_{r,0}$. The class of the boundary $\partial u$ is $-(2m+1)\partial\beta_0\in H_1(T_{r,0},\Z)$. As $\theta$ varies, the trajectory of each point on $\partial u$ is a circle in the class $\partial\alpha_0\in H_1(T_{r,0},\Z)$.
                Thus, the absolute value of the count is $|-(2m+1)\partial\beta_0\cdot\partial\alpha_0|=2m+1$. 
                
                However, we still need to check that the sign of the count is positive, which amounts to checking that the corresponding moduli space of discs has the desired orientation.
                
                We look back at the commutative diagram in step 3. The orientation of the kernel of the linearized operator $D_{\bpar_J,u}$ is canonically given by that of the Dolbeault operator in the middle row, which is in turn canonically determined by the orientation of the kernels of the upper one and the lower one.

                For the upper row, the bundle pair $(u^*T(f^{-1}(z)),u^*T(f^{-1}(z)\cap T_{r,0}))$ is canonically trivialized by the $S^1$-action on the fibers of $f$, so the kernel is canonically oriented. Moreover, by fixing an orientation of $\partial\Delta$ (and hence a trivialization of $T(\partial\Delta)$), we also obtain a canonical orientation of the kernel of the lower row. (For details on how a trivialization of the totally real subbundle determines an orientation of the moduli space, see \cite[\S 5]{CHC04}.)

                To sum up, the orientation of moduli spaces and hence the sign of the disc count are independent of $m$. Since the count for $m=0$ is positive, the count for general $m$ is positive as well.
            \end{proof}

            In an analogous manner, one can prove the following propositions for other classes. We also state the counterparts of Lemma \ref{bar u} here.
            
            \begin{lemma}\label{counterpart 1}
                There exists a unique (up to an automorphism of the domain) meromorphic function $\bar{u}:(D^2,\partial D^2)\to(\P^1\setminus\Delta,\partial\Delta)$ with exactly $2$ simple poles, $m+1$ double zeros, and $m$ double poles.
            \end{lemma}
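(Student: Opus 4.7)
The plan is to follow the same three-step template as the proof of Lemma \ref{bar u}: first establish topological existence and uniqueness of the branched cover, then upgrade to holomorphic existence by pulling back the complex structure on $\P^1 \setminus \Delta$ through the topological map and invoking the Riemann mapping theorem at each branched point, and finally deduce holomorphic uniqueness from topological uniqueness together with the local $z \mapsto z^2$ model at double branched points. The numerical data is consistent with Riemann--Hurwitz: $\bar{u}$ has degree $d = 2m+2$ (matching the total count of zeros and of poles with multiplicity) and total ramification $R = (m+1) + m = 2m+1$ from the double branched points, giving $\chi(D^2) = d\cdot\chi(\P^1\setminus\Delta) - R = 1$.

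The only genuinely new content is the topological step. The key structural difference from Lemma \ref{bar u} is that both endpoints of the lifted curve $\bar{u}^{-1}(\gamma)$ (where $\gamma$ joins $0$ and $\infty$ in $\P^1\setminus\Delta$) are preimages of $\infty$, namely the two simple poles $b_0$ and $b_1$, rather than one being a preimage of $0$ and one a preimage of $\infty$. Accordingly I would arrange the $2m+3$ preimage points along an arc inside $D^2$ in the pattern $b_0, a_0, c_1, a_1, c_2, \ldots, c_m, a_m, b_1$, where the $a_i$ are the double zeros and the $c_i$ are the double poles, and perform a cut-and-paste of $2m+2$ copies of the target disc exactly analogous to Figure \ref{image_7}. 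For topological uniqueness, the same argument as in Lemma \ref{bar u} applies: after extending $\gamma$ beyond $0$ and $\infty$ to $\partial\Delta$, the preimage $\bar{u}^{-1}(\gamma)$ together with its labelling by zero/pole data is determined by the combinatorial skeleton above up to a self-homeomorphism of $D^2$.

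Steps 2 and 3 then run verbatim as in Lemma \ref{bar u}: one pulls back the standard complex structure through $\bar{u}$ and extends it across each branched point by choosing a holomorphic chart via the Riemann mapping theorem in which $\bar{u}$ becomes $z \mapsto z^2$, then invokes uniqueness of the complex structure on $D^2$ to conclude holomorphic existence. For uniqueness of $\bar{u}$, any two such holomorphic maps are intertwined by a homeomorphism that is biholomorphic off the branched points, and the homeomorphism extends holomorphically across each double branched point via the $z'^2 = \varphi(z^2)$ argument.

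The main obstacle I anticipate is pinning down topological uniqueness, since the combinatorial pattern is more delicate than in Lemma \ref{bar u}: one must rule out alternative orderings of the double zeros and double poles along the lifted arc. A clean way to confirm this is to rephrase via Hurwitz monodromy, in which such a branched cover corresponds to a pair of permutations $(\sigma_0,\sigma_\infty)\in S_{2m+2}$ of cycle types $(2^{m+1})$ and $(2^m, 1^2)$ whose product is a single $(2m+2)$-cycle, and check by an elementary conjugation argument that such pairs are unique up to simultaneous conjugation in $S_{2m+2}$.
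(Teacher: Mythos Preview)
Your proposal is correct and follows exactly the route the paper intends: the paper gives no separate proof of this lemma, merely stating that it is proved ``in an analogous manner'' to Lemma~\ref{bar u}, and your three-step template (topological existence/uniqueness, then holomorphic existence via pulled-back complex structure and Riemann mapping at branch points, then holomorphic uniqueness via the local $z'^2=\varphi(z^2)$ square-root argument) reproduces that argument faithfully, with the correct Riemann--Hurwitz numerics and the correct combinatorial skeleton $b_0,a_0,c_1,a_1,\ldots,c_m,a_m,b_1$ for the lift of $\gamma$.

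Your supplementary monodromy formulation is a genuine (if minor) improvement over the paper's somewhat informal cut-and-paste uniqueness argument. It can be made completely explicit: since $\sigma_0,\sigma_\infty$ are involutions with $\sigma_0\sigma_\infty=c$ a $(2m{+}2)$-cycle, both lie in the dihedral group $\langle c,\rho\rangle\cong D_{2m+2}$ as reflections; for even $2m{+}2$ the reflections split into two conjugacy classes under $\langle c\rangle$ --- the $m{+}1$ fixed-point-free ones and the $m{+}1$ with two fixed points --- and $\sigma_0,\sigma_\infty$ are forced to lie in opposite classes and are determined by each other via $\sigma_\infty=\sigma_0 c$, so the pair is unique up to simultaneous conjugation by $\langle c\rangle$. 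This replaces the paper's ``the uniqueness of $\bar u$ is already clear'' with an honest computation. One feature the paper notes just after the lemma (used for the disc count, not for the lemma itself) is that this $\bar u$ can be chosen even, $\bar u(-z)=\bar u(z)$; your dihedral description makes this transparent, since the extra reflection in $D_{2m+2}$ centralizing the chosen $(\sigma_0,\sigma_\infty)$ furnishes the deck involution.
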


            \begin{proposition}
                The torus $T_{r,0}$ bounds a unique $S^1$-family of regular holomorphic discs in the class $(2B+\alpha_0-2\beta_0)+m(A-2\beta_0)$, the disc count of which is $m+1$.
            \end{proposition}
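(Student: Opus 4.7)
The plan is to follow the four-step structure of the proof of Proposition \ref{disc count}, modified for the new intersection profile. Using Table \ref{intersection numbers}, I first compute that a disc $u$ in class $(2B+\alpha_0-2\beta_0)+m(A-2\beta_0)$ has intersection numbers $(0,m+1,2,m,0)$ with $(A_0,B_0,A_\infty,B_\infty,D_\epsilon)$; in particular $u$ misses $D_\epsilon$ and $A_0$, so $u$ factors through $\P^1\setminus\Delta$ under $f$. The meromorphic function $\bar u=f\circ u=u_1^2u_2:(D^2,\partial D^2)\to(\P^1\setminus\Delta,\partial\Delta)$ then has $m+1$ double zeros (from the simple zeros of $u_1$ above $B_0$), $m$ double poles (from the simple poles of $u_1$ above $B_\infty$), and $2$ simple poles (from the simple poles of $u_2$ above $A_\infty$). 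A Riemann--Hurwitz count as in Lemma \ref{no index 0 disc for F_4} rules out higher-order merging, and Lemma \ref{counterpart 1} then determines $\bar u$ uniquely up to a domain automorphism.

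Next I would form $\tilde u=u_1^2/u_2$: the identity $|z_1|^4=|z_2|^2$ on $T_{r,0}$ gives $|\tilde u|\equiv 1$ on $\partial D^2$, and the divisor of $\tilde u$ is completely determined by those of $u_1$ and $u_2$ ($m+1$ double zeros at the $a_k$, $2$ simple zeros at the poles of $u_2$, and $m$ double poles at the $b_k$). A Blaschke-product factorization then pins $\tilde u$ down up to a unit-modulus constant $e^{4i\theta}$. Writing $u_1^4=\bar u\tilde u$ yields $u_1=e^{i\theta}(\bar u\tilde u_0)^{1/4}$ and $u_2=\bar u/u_1^2$, producing the $S^1$-family. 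Existence then follows by running this construction in reverse, starting from the $\bar u$ produced by Lemma \ref{counterpart 1}.

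Regularity follows from the same commutative diagram used in Proposition \ref{disc count}. Here $\mathcal L_u\cong\bar u^*T\C(-a_1-\cdots-a_{m+1}-b_1-\cdots-b_m)$, accounting for the vanishing of $df$ at the $2m+1$ branch points of $\bar u$ (the $m+1$ double zeros together with the $m$ double poles). The relative degree of $\bar u^*T\C$ equals $\deg\bar u=2m+2$, so after twisting by $2m+1$ points the relative degree of the bundle pair drops to $1$ and the lower-row Dolbeault operator is surjective; the upper row remains surjective via the trivialization of $(u^*T(f^{-1}(z)),u^*T(f^{-1}(z)\cap T_{r,0}))$, exactly as before.

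The main obstacle is Step 4, the disc count. A naive intersection computation via $\partial u=\partial\alpha_0-2(m+1)\partial\beta_0\in H_1(T_{r,0},\Z)$ paired with the trajectory class $\partial\alpha_0$ gives $2(m+1)$; the halving to the claimed $m+1$ arises from a reparameterization redundancy in the $\theta$-family. Concretely, because $\deg\bar u$ is now even and the branch configuration admits a $\Z_2$ symmetry of multiplicities, $\bar u$ carries a holomorphic involution $\sigma$ of $D^2$ under which $(u_1\circ\sigma,u_2\circ\sigma)=(-u_1,u_2)$; this identifies the discs at parameters $\theta$ and $\theta+\pi$ in the $\theta$-family, so the unparameterized moduli is an $S^1/\Z_2$-family and the honest count is $m+1$. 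Positivity of the sign then follows from the orientation argument of Step 3 together with direct verification at $m=0$, as in Proposition \ref{disc count}.
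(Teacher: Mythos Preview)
Your proposal is correct and follows exactly the approach the paper intends: the paper's own proof is only the one-line remark that $\bar u$ ``can be chosen to be an even function, i.e., $\bar u(z)=\bar u(-z)$, and replacing the parameter $\theta$ by $\theta+\pi$ yields the same disc up to a reparametrization $z\mapsto -z$,'' which is precisely your involution $\sigma$ and the resulting halving of the naive count $2(m+1)$. Your Steps~1--3 are the straightforward adaptations of Proposition~\ref{disc count} that the paper leaves implicit, and your regularity bookkeeping (twisting $\bar u^*T\C$ at the $2m+1$ branch points to get relative degree $(2m+2)-(2m+1)=1$) is exactly right.
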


            \begin{lemma}\label{counterpart 2}
                There exists a unique (up to an automorphism of the domain) meromorphic function $\bar{u}:(D^2,\partial D^2)\to(\P^1\setminus\Delta,\partial\Delta)$ with exactly $2$ simple zeros, $m$ double zeros, and $m+1$ double poles.
            \end{lemma}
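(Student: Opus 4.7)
The plan is to follow exactly the template established by the proof of Lemma \ref{bar u} (and its symmetric cousin Lemma \ref{counterpart 1}), since the only thing that changes is the combinatorial data of the zeros and poles. First I check consistency via Riemann--Hurwitz: the total zero count is $2+2m$ and the total pole count is $2(m+1)$, so the degree of $\bar{u}$ is $d=2m+2$, and $\chi(D^2)=d\,\chi(D^2)-R$ forces $R=2m+1$. The prescribed branch points contribute $m$ (from the double zeros) plus $m+1$ (from the double poles), which is exactly $2m+1$. So the branch data is feasible and no extra ramification is forced on us.

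Next I build $\bar{u}$ topologically. Arrange the $2m+3$ marked points along an arc in $D^2$ in an alternating pattern ending in the two simple zeros, say $a_0, b_1, a_2, b_2, \dots, a_{m+1}, b_{m+1}, a_1$, where the $a$'s are (simple or double) zeros and the $b$'s are double poles; a simple zero sits at each end of the arc while the double zeros/poles alternate in the middle, which is the analogue of the picture in Figure \ref{image_7}. A cut-and-paste exactly as in that figure produces a branched covering of $(\P^1\setminus\Delta,\partial\Delta)$ with standard double branch points at the $a_i$ $(i\ge 2)$ and the $b_j$, and simple (unramified) preimages at the two simple zeros and at the simple pole... wait, here there is \emph{no} simple pole, but the argument is unchanged: the simple zeros play the same role as the ``endpoints'' played by the simple zero/simple pole in Lemma \ref{bar u}. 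Topological uniqueness follows by the same observation: connecting $0$ and $\infty$ in $\P^1\setminus\Delta$ by an arc $\gamma$, the preimage $\bar{u}^{-1}(\gamma)$ is a union of arcs in $D^2$ whose combinatorics are determined by the prescribed zero/pole pattern, and any two topological branched covers can be matched by a self-homeomorphism of the disc.

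To upgrade this to a holomorphic statement, I pull back the standard complex structure on $\P^1\setminus\Delta$ to $D^2$ minus the branch points, and extend it across each double branch point $a_i$ or $b_j$ by choosing a local chart via the Riemann mapping theorem so that $\bar{u}$ takes the local form $z\mapsto z^2$. By the uniformization of the disc, the resulting Riemann surface structure on $D^2$ is biholomorphic to the standard one, and this biholomorphism transports $\bar{u}$ to a holomorphic map with the required zeros and poles. For uniqueness, given two such $\bar{u}_1,\bar{u}_2$, topological uniqueness produces a homeomorphism $h:(D^2,\partial D^2)\to (D^2,\partial D^2)$ with $\bar{u}_1=\bar{u}_2\circ h$; then $h$ is biholomorphic away from the branch points, and near a double branch point the square-root trick applied to $\varphi(z^2)$ (exactly as in the last paragraph of Lemma \ref{bar u}) shows $h$ extends biholomorphically, hence is an automorphism of $D^2$.

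The only step requiring genuine care is the topological construction, specifically verifying that putting the two simple zeros at the endpoints (rather than a zero and a pole, as in Lemma \ref{bar u}) still produces a valid cut-and-paste configuration, because the orientation of the boundary circle $\partial\Delta$ forces a consistent alternation between ``zero preimages'' and ``pole preimages'' along $\bar{u}^{-1}(\gamma)$. I expect this to work because the arithmetic of Riemann--Hurwitz already balances, but it is the only place where blindly copying the earlier argument could fail.
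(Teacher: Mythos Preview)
Your proposal is correct and follows exactly the approach the paper intends: the paper does not write out a separate proof, saying only that the lemma is proved ``in an analogous manner'' to Lemma \ref{bar u}. Your worry about the endpoint configuration is unfounded: each edge of $\bar{u}^{-1}(\gamma)$ maps bijectively to $\gamma$ and hence joins a zero-preimage to a pole-preimage, so the vertex sequence along the lifted arc is forced to alternate zero, pole, zero, \dots, pole, zero; with $m+2$ zeros and $m+1$ poles this is exactly consistent with placing the two simple zeros at the two ends, and the Riemann--Hurwitz count you already verified guarantees there is no extra ramification to disrupt the picture.
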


            \begin{proposition}
                The torus $T_{r,0}$ bounds a unique $S^1$-family of regular holomorphic discs in the class $(A-\alpha_0-2\beta_0)+m(A-2\beta_0)$, the disc count of which is $m+1$.
            \end{proposition}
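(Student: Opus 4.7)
I plan to mimic the four-step structure of Proposition \ref{disc count}, using Lemma \ref{counterpart 2} in the role played there by Lemma \ref{bar u}. For a holomorphic disc $u=(u_1,u_2)$ in the class $(A-\alpha_0-2\beta_0)+m(A-2\beta_0) = (m+1)A-\alpha_0-(2m+2)\beta_0$, the intersection numbers with $A_0, B_0, A_\infty, B_\infty, D_\epsilon$ read from Table \ref{intersection numbers} are $(2, m, 0, m+1, 0)$. Thus $u_2$ has $2$ zeros and no poles, while $u_1$ has $m$ zeros and $m+1$ poles; a Riemann--Hurwitz count applied to $\bar{u}=u_1^2u_2$ as in Lemma \ref{no index 0 disc for F_4} excludes higher-order clumping, so $\bar{u}$ has exactly $2$ simple zeros, $m$ double zeros, and $m+1$ double poles. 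By Lemma \ref{counterpart 2}, $\bar{u}$ is then unique up to a domain automorphism.

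Setting $\Tilde{u} = u_1^2/u_2$, the torus condition $|z_1|^4=|z_2|^2$ forces $|\Tilde{u}|=1$ on $\partial D^2$, so $\Tilde{u} = e^{4\mathrm{i}\theta}\Tilde{u}_0$ for the Blaschke product $\Tilde{u}_0$ determined by the zero--pole data and some phase $\theta \in S^1$. The formulas $u_1 = e^{\mathrm{i}\theta}(\bar{u}\Tilde{u}_0)^{1/4}$ and $u_2 = \bar{u}/u_1^2$ produce the desired $S^1$-family, with boundary condition $|u_1|^4 = |u_2|^2$ verified from $|\bar{u}\Tilde{u}_0| = |\bar{u}|$. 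Regularity is handled by the same commutative diagram as in Proposition \ref{disc count}: here $\bar{u}$ has degree $2m+2$ and $2m+1$ branch points (the $m$ double zeros and $m+1$ double poles), so the twisted bundle pair $\mathcal{L}_u = \bar{u}^*T\mathbb{C}(-a_1-\cdots-a_m-b_1-\cdots-b_{m+1})$ has degree $1$, making its Cauchy--Riemann operator automatically surjective.

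For the disc count, $[\partial u] = -\partial\alpha_0 - (2m+2)\partial\beta_0$ has intersection number $2m+2$ with the trajectory class $\partial\alpha_0 \in H_1(T_{r,0},\mathbb{Z})$ of the $S^1$-action, which enumerates parametrized $(\theta, z_0)$ pairs whose image in $T_{r,0}$ is a given generic point. Unlike the odd-degree case handled in Proposition \ref{disc count}, here one needs to halve this count: the shift $\theta \mapsto \theta + \pi$ sends $(u_1, u_2)$ to $(-u_1, u_2)$, which coincides with the Galois involution $\sigma_\Psi:(z_1,z_2)\mapsto(-z_1,z_2)$ of the branched double cover $\Psi:\F_4\to\F_2$, and this equals $u \circ \phi$ for a domain involution $\phi$ swapping the two simple zeros of $\bar{u}$ and fixing its remaining zero--pole data. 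Hence the unparametrized disc count is $(2m+2)/2 = m+1$, and positivity of the sign follows from the orientation argument of Proposition \ref{disc count} applied to the basic case $m=0$.

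The main obstacle is the $\mathbb{Z}_2$ identification in Step 4: one must exhibit the domain involution $\phi$ for the unique (modulo $\Aut(D^2)$) $\bar{u}$ from Lemma \ref{counterpart 2}, confirming that its two indistinguishable simple zeros can be placed symmetrically (e.g., in the cut-and-paste construction of Lemma \ref{counterpart 2}) so that an involution of $D^2$ realizes the $\sigma_\Psi$-twisted symmetry. Once this is in place, the remaining arguments are direct adaptations of those in Proposition \ref{disc count}.
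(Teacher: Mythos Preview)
Your proposal is correct and follows essentially the same approach as the paper. The paper resolves the $\mathbb{Z}_2$-identification obstacle you highlight by observing that the $\bar{u}$ of Lemma \ref{counterpart 2} can be chosen to be an \emph{even} function, so that the required domain involution is simply $\phi(z)=-z$; note that this involution permutes the double zeros and double poles in pairs as well (rather than fixing them individually as you wrote), but this is immaterial to the halving argument.
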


            The only difference for these two cases is that the meromorphic function $\bar{u}$ given by Lemma \ref{counterpart 1} and Lemma \ref{counterpart 2} can be chosen to be an even function, i.e., $\bar{u}(z)=\bar{u}(-z)$, and replacing the parameter $\theta$ by $\theta+\pi$ yields the same disc up to a reparametrization $z\mapsto -z$. The disc count is hence halved compared to the case of Proposition \ref{disc count}.

            For the classes $\beta_0+m(A-2\beta_0)$, the situation is also slightly different.

            \begin{proposition}
                The torus $T_{r,0}$ bounds a unique $S^1$-family of regular holomorphic discs in the class $\beta_0$ and in the class $A-\beta_0$, the disc count of which is $1$. $T_{r,0}$ bounds no holomorphic discs in the classes $\beta_0+m(A-2\beta_0)$ for $m>1$.
            \end{proposition}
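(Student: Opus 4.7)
My plan is to adapt the framework of Proposition \ref{disc count} and its counterparts to this class, handling the three cases $m=0$, $m=1$, and $m>1$ separately. The key difference from those previous propositions is that now $\bar u = u_1^2 u_2$ attains the value $\epsilon$ once (since the class meets $D_\epsilon$ once) rather than mapping into $\P^1\setminus\Delta$, so I cannot directly invoke Lemma \ref{bar u}.

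For $m=0$ (class $\beta_0$), both $u_1$ and $u_2$ are non-vanishing holomorphic functions on $D^2$, and the $\lambda=0$ condition $|u_1|^4=|u_2|^2$ forces $v:=u_2/u_1^2$ to have modulus one on $\partial D^2$; having no zeros or poles, $v$ is a unimodular constant $e^{i\alpha}$ by the maximum modulus principle applied to $v$ and $1/v$. Then $\bar u = v u_1^4$ is a degree-one holomorphic map $D^2\to\bar\Delta$ hitting $\epsilon$ once with boundary on $\partial\Delta$, hence a biholomorphism; reconstructing $u_1=(\bar u/v)^{1/4}$ and $u_2=(v\bar u)^{1/2}$ gives the unique $S^1$-family parametrised by $\alpha$, with count $1$. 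For $m=1$ (class $A-\beta_0$), $u_1$ has one simple zero $a_1$ and one simple pole $b_1$ in $D^2$ and $u_2$ is non-vanishing, so writing $\bar u = \epsilon + rB$ reduces the problem to finding a modified Blaschke-type rational function $B$ of the form $e^{i\theta}\tfrac{z-z_0}{1-\bar z_0 z}\bigl(\tfrac{1-\bar b_1 z}{z-b_1}\bigr)^{2}$, with $|B|=1$ on $\partial D^2$, for which $-\epsilon/r$ is a critical value attained at $a_1\in D^2$ (i.e., $B(a_1)=-\epsilon/r$ and $B'(a_1)=0$). A Schwarz-symmetric analogue of Lemma \ref{bar u} yields existence and uniqueness of such a $B$ up to domain automorphism, and the disc is reconstructed as before, giving count $1$.

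For $m>1$, the plan is to assume a disc exists and derive a contradiction. Extending $\bar u$ across $\partial D^2$ by Schwarz reflection produces a rational function on $\P^1$ of degree $2m+1$ whose critical points are paired under $z\mapsto 1/\bar z$. Riemann--Hurwitz gives total ramification $R=4m$: $2m$ of it is already absorbed by the $m$ double poles $b_j$ together with their reflected double zeros $1/\bar b_j$, leaving exactly $2m$ further critical points. The Schwarz pairing splits these into $m$ in $D^2$ (forced to share the single critical value $-\epsilon/r$, since they are precisely the double zeros of $\bar u$) and $m$ in $|z|>1$ (with reflected critical value $-r/\epsilon$). Thus $B=(\bar u-\epsilon)/r$ has only four critical values $\{0,\infty,-\epsilon/r,-r/\epsilon\}$, and the Riemann existence theorem classifies such functions by monodromy representations $\pi_1(\P^1\setminus\{\text{four points}\})\to S_{2m+1}$ in which each generator is a product of $m$ disjoint transpositions.

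The main obstacle is ruling out such a representation for $m\ge 2$ once the Schwarz-symmetry constraint (an antiholomorphic involution compatible with the monodromy) is imposed: a naive parameter count is zero-dimensional for every $m$, so the vanishing of the disc count cannot come from a pure dimension argument or from a direct Riemann--Hurwitz contradiction as in Lemma \ref{no index 0 disc for F_4}. I expect the obstruction to emerge from the interaction between the four-branch-point cycle structure and the Schwarz involution, which conjugates the monodromy around $-\epsilon/r$ with that around $-r/\epsilon$ and around $0$ with that around $\infty$; carrying out this combinatorial check is where the real work lies.
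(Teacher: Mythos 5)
Your $m=0$ argument is fine, and your $m=1$ sketch is plausible (if vague, leaning on an unproved ``Schwarz-symmetric analogue of Lemma~\ref{bar u}''; the paper is similarly brief, deferring to the method of Proposition~\ref{disc count}). The genuine gap is in the $m>1$ case, which is the main content of the proposition and which you explicitly do \emph{not} finish: you Schwarz-reflect $B=(\bar u-\epsilon)/r$ to a degree-$(2m+1)$ rational map with four branch points of cycle type $(2,\dots,2,1)$ plus a real structure, reduce nonexistence to a Hurwitz-type classification, and then write that ``carrying out this combinatorial check is where the real work lies.'' That check is not done, and it is far from clear it would come out in your favor---Hurwitz numbers for such cycle data are generically positive, so everything hinges on the real-structure constraint, for which you give no argument. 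Until that is carried out, the statement that $T_{r,0}$ bounds no discs in $\beta_0+m(A-2\beta_0)$ for $m>1$ is unproved.

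You also assert that the nonexistence ``cannot come from \dots\ a direct Riemann--Hurwitz contradiction as in Lemma~\ref{no index 0 disc for F_4},'' and this leads you past a much simpler route, which is the one the paper actually takes. Since the class meets $D_\epsilon$ once, $\bar u^{-1}(\Delta)\subset D^2$ is biholomorphic to a disc; let $C=D^2\setminus\bar u^{-1}(\Delta)$. Applying Riemann--Hurwitz to any would-be disc component of $C$ (over $\P^1\setminus\Delta$, where $\bar u|_C$ has only double zeros and double poles) gives a contradiction exactly as in Lemma~\ref{no index 0 disc for F_4}, so $C$ is an annulus with boundary circles $\partial_0C=\partial D^2$ and $\partial_1C=\partial\bar u^{-1}(\Delta)$, mapping to $\partial\Delta$ with degrees $2m-1$ and $1$ respectively. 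Lifting an arc $\gamma$ joining $0$ to $\infty$ in $\P^1\setminus\Delta$, one shows $\bar u^{-1}(\gamma)$ is a single loop generating $H_1(C)$ and cuts $C$ into two pieces on each of which $\bar u$ is an honest cover, which forces the two boundary degrees to agree: $2m-1=1$, so $m=1$. This elementary topological argument is what your proposal misses, and it disposes of $m>1$ without any Hurwitz-theoretic input.
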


            \begin{proof}
                The result for the class $\beta_0$ is clear, since $\beta_0$ is a basic class. Below, we assume $m\geq1$.
                Suppose $u$ is a holomorphic disc in the class $\beta_0+m(A-2\beta_0)$. 

                Since the class $\beta_0+m(A-2\beta_0)$ has $1$ intersection with $D_\epsilon$, $u$ does not entirely lie over $\P^1\setminus\Delta$. The meromorphic function $\bar{u}=f\circ u:(D^2,\partial D^2)\to(\P^1,\partial\Delta)$ has non-trivial preimage $\bar{u}^{-1}(\Delta)$ of $\Delta$, which is biholomorphic to a disc. 
                
                We consider the complement $C$ of $\bar{u}^{-1}(\Delta)$ in the domain $D^2$. $C$ is a genus $0$ Riemann surface with boundary. (If $\partial(\bar{u}^{-1}(\Delta))$ intersects with $\partial D^2$, it is possible that $C$ is disconnected.) The restriction $\bar{u}|_C:C\to (\P^1\setminus\Delta,\partial\Delta)$ has only double zeros and double poles. If $C$ has a disc component, applying the Riemann-Hurwitz formula to $\bar{u}$ restricted on this component yields a contradiction. Thus, $C$ has no disc component and $\partial(\bar{u}^{-1}(\Delta))$ does not intersect with $\partial D^2$. $C$ has to be an annulus, whose outer boundary $\partial_0C$ is $\partial D^2$, inner boundary $\partial_1 C$ is $\partial(\bar{u}^{-1}(\Delta))$.

                Consider the restriction of $\bar{u}$ on the annulus $C$
                \[\bar{u}|_C:(C,\partial_0 C\cup\partial_1 C)\to(\P^1\setminus\Delta,\partial\Delta).\]
                The further restriction $\bar{u}|_{\partial_0 C}:\partial_0 C\to\partial\Delta$ has degree $2m-1$, since the class of $\partial(\beta_0+m(A-2\beta_0))$ is $(1-2m)\partial\beta_0$. The other restriction $\bar{u}|_{\partial_1 C}:\partial_1 C\to\partial\Delta$ has degree $1$.

                \textbf{Claim.} The restrictions $\bar{u}|_{\partial_0 C}$ and $\bar{u}|_{\partial_1 C}$ should have the same degree.

                In fact, we connect the points $0,\infty\in\P^1\setminus\Delta$ by a smooth curve $\gamma$, and consider the lift $\bar{u}^{-1}(\gamma)$ of the curve $\gamma$. Since $\bar{u}|_C$ has $m$ double zeros and $m$ double poles, $C$ is a branched cover of $\P^1\setminus\Delta$, branching at these zeros and poles. Hence, the lift $\bar{u}^{-1}(\gamma)$ is a union of loops. Since each component of $C\setminus\bar{u}^{-1}(\gamma)$ should have a component of $\partial C$ as a boundary component, $\bar{u}^{-1}(\gamma)$ is actually one loop and generates $H_1(C,\Z)$ (Figure \ref{image_8}). 

                  \begin{figure}[htbp]  
            \centering  
            \includegraphics[width=0.7\textwidth]{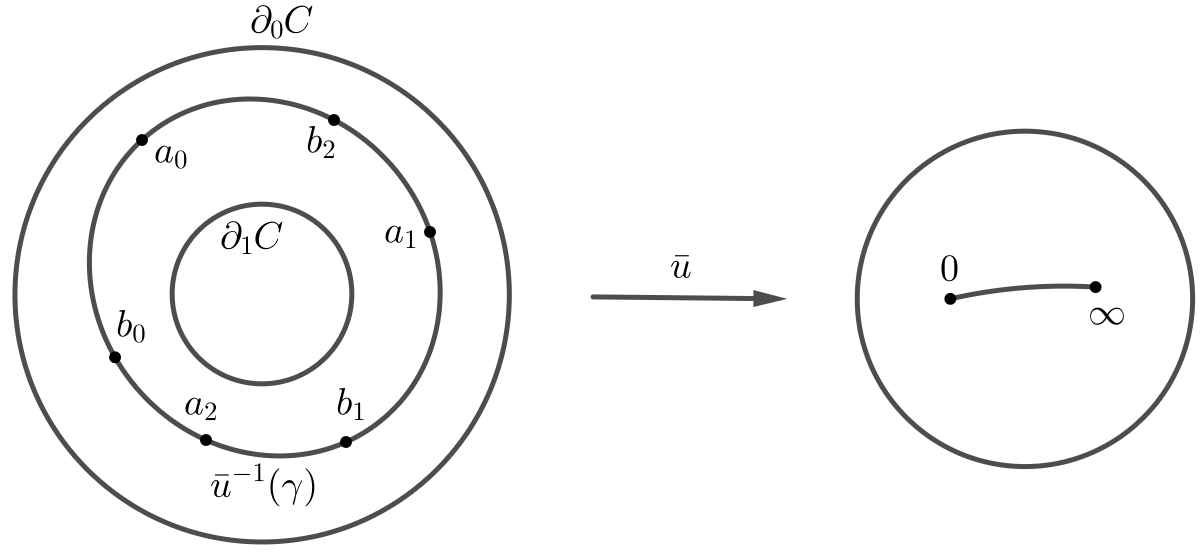}  
            \caption{The restriction of $\bar{u}$ on the annulus $C$}  
            \label{image_8}  
        \end{figure}

                Thus, $\bar{u}^{-1}(\gamma)$ divides the annulus $C$ into two annuli. Restricted on the interior of each of these two annuli, $\bar{u}$ is an honest covering map onto $(\P^1\setminus\Delta)\setminus\gamma$. By considering homology long exact sequences, we can see that the degree of $\bar{u}$ restricted to any of the boundary circles should equal the degree of $\bar{u}$ restricted on any of the divided annuli. The claim is proved.

                It follows from the claim that when $m>1$, such a disc $u$ does not exist. When $m=1$, one can prove that the torus $T_{r,0}$ bounds a unique family of regular discs in $A-\beta_0$, as in Proposition \ref{disc count}, and the corresponding disc count is $1$.
            \end{proof}

            Finally, having understood the contribution of various index $2$ classes to the superpotential, we now find out the expression of the superpotential.

            We choose real coordinates on the moment polytope of $\F_4$ as a subset of $\R^2$, as in Figure \ref{image_6}. In this way, the basic classes $\beta_0$, $B-\beta_0$, $A-\alpha_0-2\beta_0$, and $2B+\alpha_0-2\beta_0$ correspond to the terms in
            \[y+\frac{T^B}{y}+\frac{T^{\frac{A}{2}+B}}{xy^2}+\frac{T^{\frac{A}{2}+B}x}{y^2}.\]
            Here we abused the notation denoting $\omega(A)$ by $A$ and $\omega(B)$ by $B$. Similarly, the class $(A-2\beta_0)$ corresponds to the term $\frac{T^A}{y^2}$.

            \begin{figure}[htbp]  
            \centering  
            \includegraphics[width=0.8\textwidth]{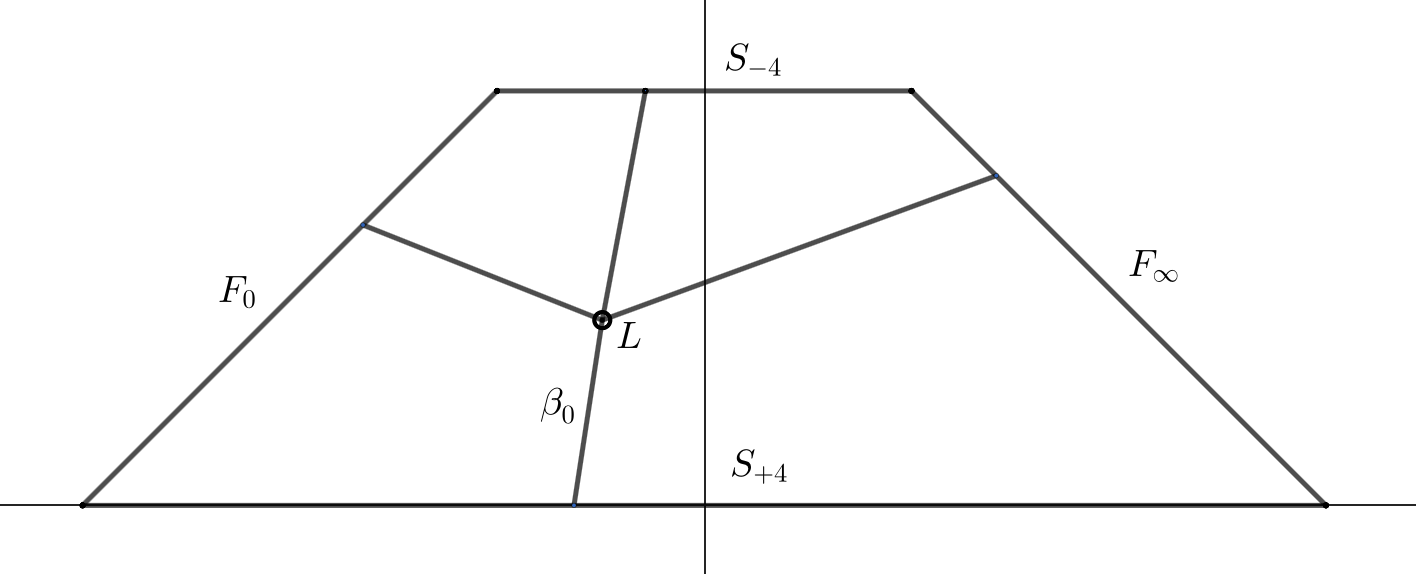}  
            \caption{The moment polytope of $\F_4$ as a subset of $\R^2$}  
            \label{image_6}  
        \end{figure}

            Therefore, by summing up the terms corresponding to various classes listed in Proposition \ref{index 2 discs for F_4}, with appropriate coefficients we just obtained, the expression of the superpotential is
            \begin{align*}
                W(x,y)=y+\frac{T^A}{y}+\frac{T^B}{y}&\left(1+3\frac{T^A}{y^2}+5\frac{T^{2A}}{y^4}+\cdots\right)\\
                +\frac{T^{\frac{A}{2}+B}}{xy^2}&\left(1+2\frac{T^A}{y^2}+3\frac{T^{2A}}{y^4}+\cdots\right)\\
                +\frac{T^{\frac{A}{2}+B}x}{y^2}&\left(1+2\frac{T^A}{y^2}+3\frac{T^{2A}}{y^4}+\cdots\right)
                \end{align*}\begin{equation}\label{superpotential1}
                \ \ \ \ \ \ \ \ \ \ \ =y+\frac{T^A}{y}+T^By\cdot\frac{y^2+T^A}{(y^2-T^A)^2}+T^{\frac{A}{2}+B}y^2\cdot\frac{x+\frac{1}{x}}{(y^2-T^A)^2}.
            \end{equation}

            The critical values of $W$ are $\pm 2T^{\frac{A}{2}}\pm 2T^{\frac{B}{2}}$, which match with those of\[y+\frac{T^B}{y}+x+\frac{T^A}{x},\]
            the superpotential for $\F_0=\P^1\times\P^1$. This is not unexpected since $\F_0$ and $\F_4$ are symplectomorphic.

            \begin{remark}
                The formula of the superpotential given in Theorem \ref{main theorem 2} is related to the formula of $W(x,y)$ here by the change of coordinates $(x,y)\mapsto (T^{-\frac{A}{2}-B}xy^2,y)$, which corresponds to an integral affine change of $\mathbb{R}^2$-coordinates for the moment polytope of $\F_4$.
            \end{remark}

    \section{Comparison with Other Constructions of Mirrors for $\F_4$}

    In Section $3$, we chose a deformation of complex structure of $\F_4$ under which there are no Maslov index $0$ discs bounded by product Lagrangian tori in $\F_4$. In this section, we first consider another deformation, under which index $0$ discs again do not exist, and compute the corresponding superpotential. Then, we explain how the different superpotentials for $\F_4$ we obtained, along with superpotentials for $\F_0$ and $\F_2$, are connected by an infinite sequence of wall-crossing transformations in a scattering diagram. 

    \subsection{Another Mirror Superpotential for $\F_4$}

    Recall that the semiuniversal deformation $\JJ$ of the complex structure of $\F_4$ (see \cite[\S 2.3]{MM04}) is a $\C^3$-family of complex structures. The complex structure over $0$ is that of $\F_4$. There is a two-dimensional algebraic cone in $\C^3$ over which minus $0$ the complex structure is that of $\F_2$. The complex structure over the remaining points in $\C^3$ is that of $\F_0$.

    In this subsection, we take a deformation of $\F_4$ to $\F_2$, and prove that there are no index $0$ discs under this deformation. Then, we determine the corresponding superpotential by showing that under the chosen deformation, product tori in $\F_4$ are related to product tori in $\F_2$ via an explicit wall-crossing transformation. The superpotential (Proposition \ref{Vianna superpotential}), which I learned from Vianna, is also present in \cite{BGL25}.

    For a deformation from $\F_4$ to $\F_2$, the deformed manifold contains a holomorphic sphere with self-intersection $-2$, which is necessarily deformed from the nodal sphere $F_z\cup S_{-4}$ in $\F_4$ for some $z\in\P^1$. In this case, the section $s$ of the obstruction bundle $\OO b$ over $\MM(\F_4;\sigma+2\phi)\cong\P^1\times\P^1$ (Proposition \ref{Ob for F_4}) has the zero set $s^{-1}(0)=\{z_0=z\}\cup\{z_1=z\}$. (The stable spheres in $F_4$ that survive after this deformation are $F_z+F_{z'}+S_{-4}$, $z'\in\P^1$.)

    By applying an automorphism of $\F_4$ induced from an automorphism of $S_{-4}$, we can choose the deformation such that $z=0$. Now we fix a product torus $L$ in $\F_4$, and consider holomorphic discs bounded by $L$.

    \begin{proposition}
        $L$ cannot bound any index $0$ discs after the chosen deformation.
    \end{proposition}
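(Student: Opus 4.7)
My plan closely follows the templates of Proposition \ref{wall of F_3} and Proposition \ref{index 0 discs for F_4}: use positivity of intersection against all surviving holomorphic spheres to constrain the possible classes of $J_{\F_4}(t)$-holomorphic index $0$ discs bounded by $L$, then rule out the remaining candidates via Gromov compactness back to $\F_4$.

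First I would compile an inventory of holomorphic spheres in the deformed complex manifold for $t\neq 0$. These include: the fibers $F_z$ for $z\neq 0$ and $F_\infty$ (all in class $\phi$); the positive section $S_{+4}$; the $(-2)$-section $E'$ of the deformed $\F_2$, in the class $\sigma+\phi$, deformed from $F_0\cup S_{-4}$; and for each $z'\in\P^1$, the sphere $E_{0,z'}$ in class $\sigma+2\phi$ deformed from $F_0+F_{z'}+S_{-4}$. The existence of the last family is guaranteed by the obstruction-bundle analysis of the previous subsection, since in this case $s^{-1}(0)=\{z_0=0\}\cup\{z_1=0\}$ contains the whole slice $\{z_0=0\}$.

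Second, I would write a putative index $0$ stable disc class as $m_1\beta_1+m_2\beta_2+m_3\sigma+m_4\phi$, apply positivity of intersection against each sphere in the inventory, and combine the resulting inequalities with the Maslov index identity $m_1+m_2-2m_3+2m_4=0$. By analogy with the $\F_3$ case, I expect the surviving classes to form the one-parameter family $m(\beta_1-\beta_2)$ for $m\in\Z_{>0}$. Carrying this out requires care in computing the intersection numbers of $\beta_1,\beta_2$ with the non-toric spheres $E'$ and $E_{0,z'}$, since these intersection numbers are not homological invariants of the sphere class.

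Finally, I would rule out the remaining family by Gromov compactness, mimicking the end of Proposition \ref{wall of F_3}. If $L$ bounded $J_{\F_4}(t_n)$-holomorphic discs in class $m(\beta_1-\beta_2)$ along a sequence $t_n\to 0$, they would converge to a $J_{\F_4}(0)$-holomorphic stable disc in the same class in the original $\F_4$. Such a stable disc must contain at least one copy of $S_{-4}$ because $m(\beta_1-\beta_2)\cdot S_{-4}<0$; but each copy of $S_{-4}$ contributes positively to the intersection with $F_\infty$, while $m(\beta_1-\beta_2)\cdot F_\infty=0$, forcing a contradiction. Hence no such disc can exist for sufficiently small $t$. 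The main obstacle I anticipate is the bookkeeping in step 2 — verifying that the inequality system really cuts down to $\{m(\beta_1-\beta_2)\}$ rather than a larger sublattice — while the first and third steps should be routine adaptations of the $\F_3$ argument.
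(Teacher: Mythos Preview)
Your approach is essentially the paper's. The paper uses only the four spheres $F_0$, $F_\infty$, $S_{+4}$, and $E$ (your $E'$) in the intersection argument---the extra family $E_{0,z'}$ is unnecessary---and obtains exactly the candidate family $m(\beta_1-\beta_2)$, which it then eliminates via the Gromov-compactness argument you describe.

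The one point the paper handles explicitly that your outline does not is the nodal case. Because the deformed manifold is $\F_2$, which is only semi-Fano, it carries a Chern-number-zero sphere $E$; hence an index-$0$ \emph{stable} disc could appear as a smooth index-$0$ disc together with $k>0$ copies of $E$. This is the new feature compared to the $\F_3$ situation you are imitating (there the generic target is Fano, so no such bubbles arise). The paper disposes of this case directly: the smooth component, in class $m_1\beta_1+m_2\beta_2+(m_3-k)\sigma+(m_4-k)\phi$, must meet $E$ \emph{strictly} positively, and combining this with $m_3-k\ge 0$, $m_4-k\ge 0$ and the Maslov identity yields $-m_3-m_4+2k>0$, a contradiction. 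Within your framework the case is also covered---the smooth component is itself an index-$0$ disc in some class $m'(\beta_1-\beta_2)$, already ruled out by your steps 2 and 3---but you should state this reduction, since otherwise positivity of intersection against $E'$ is not valid when $E'$ is a component of the stable disc.
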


    \begin{proof}
        Suppose $L$ bounds an index $0$ stable disc in the class $m_1\beta_1+m_2\beta_2+m_3\sigma+m_4\phi$, and this stable disc survives after the deformation. 
        
        If this disc becomes a smooth holomorphic disc after the deformation, it should have non-negative intersection with the spheres $F_0,F_\infty,S_{+4},E$, where $E$ is the exceptional sphere in $\F_2$ deformed from $F_0\cup S_{-4}$. We have
            \begin{align*}
                m_1+m_3&\geq0,\\
                m_3&\geq0,\\
                m_4&\geq0,\\
                m_1+m_2-3m_3+m_4&\geq0,\\
                m_1+m_2-2m_3+2m_4&=0.
            \end{align*}
            
            From these relations, the only possible classes are $m\beta_1-m\beta_2$, $m\in\Z_{>0}$. However, stable discs in the class $m\beta_1-m\beta_2$ do not exist in $\F_4$, for the same reason as in Proposition \ref{wall of F_3}.

            If this disc becomes a nodal disc, it has to be the union of a smooth disc of index $0$ and $k(>0)$ copies of $E$ after the deformation. The smooth disc component is in the class 
            \[m_1\beta_1+m_2\beta_2+(m_3-k)\sigma+(m_4-k)\phi,\]which has non-negative intersection with $F_0,F_\infty,S_{+4}$ and positive intersection with $E$. Hence,
            \begin{align*}
                m_3-k&\geq0,\\
                m_4-k&\geq0,\\
                m_1+m_2-3m_3+m_4+2k&=-m_3-m_4+2k>0.
            \end{align*}
            Such a class does not exist. Therefore, $L$ cannot bound any index $0$ discs that survive the deformation.
        \end{proof}

        This proposition shows that there are no walls under the chosen deformation.

        \begin{remark}
            This would not be true if we chose another deformation to $\F_2$ which deforms $F_z\cup S_{-4}$ into the exceptional sphere for some $z\neq0,\infty$, because there exist discs with non-positive index bounded by tori that intersect $F_z$.
        \end{remark}

        Next, we calculate the superpotential by modifying the Lagrangian torus fibration and relating the modified tori to product tori in $\F_2$. 
        
        More specifically, we perform a nodal trade (see for example \cite[\S 8.2]{LagrangianFibrations}) at the top left corner of the moment polytope of $\F_4$.  We require that $F_0\cup S_{-4}$ is deformed into $E$ by the nodal trade. See Figure \ref{image_9}. The second figure is a mutated version of the fibration in the first figure.

        \begin{figure}[htbp] 
            \centering  \includegraphics[width=0.8\textwidth]{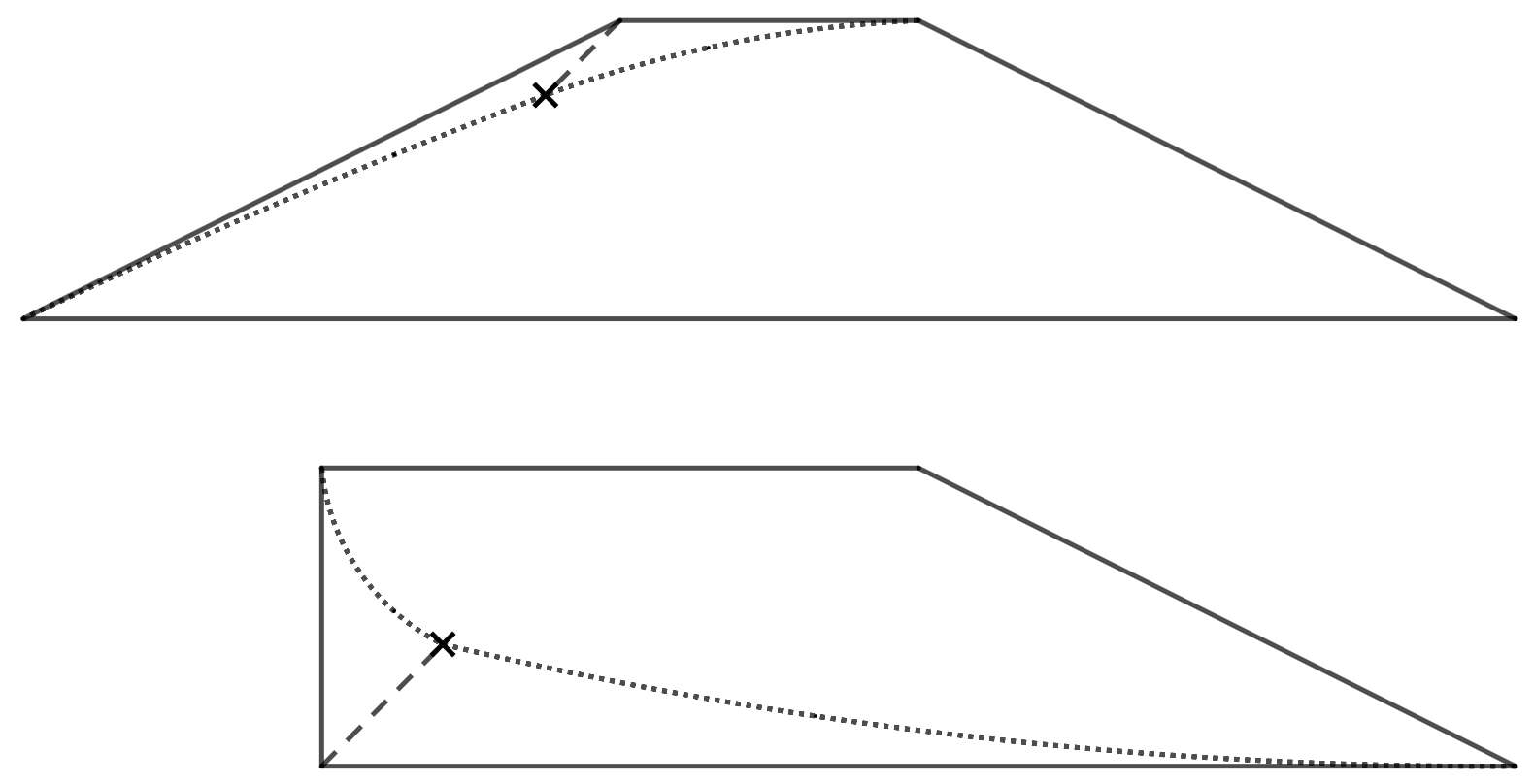} 
            \caption{The base of the Lagrangian torus fibration after the nodal trade}  
            \label{image_9}  
        \end{figure}

        Performing a nodal trade produces a singular Lagrangian fiber, which is represented by an `X' mark in the figures. It also generates a wall, which is roughly indicated by a dashed curve passing through the singular fiber. Note that the fibration represented by the second figure can also be obtained by a nodal trade performed on the standard torus fibration of $\F_2$. In both figures, the lower region delimited by the wall corresponds to product tori in $\F_4$, and the upper region corresponds to product tori in $\F_2$.

        In the following proposition, we use the same affine coordinates for the moment polytope of $\F_4$ as at the end of Subsection $3.3$, which induce coordinates $x,y$ for the mirror space $(\Lambda^*)^2$ as usual.

        \begin{proposition}\label{Vianna superpotential}
            The superpotential for $\F_4$ with respect to the chosen deformation is
            \begin{equation}\label{superpotential2}
                y+\left(1+T^{A-B}\right)\left(\frac{T^B}{y}+\frac{T^{\frac{A}{2}+B}x}{y^2}\right)+\frac{T^{\frac{A}{2}+B}}{xy^2}\left(1+\frac{T^{\frac{A}{2}}x}{y}\right)^3.
            \end{equation}
        \end{proposition}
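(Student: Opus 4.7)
The plan is to derive the superpotential by wall-crossing from the known semi-Fano $\F_2$ superpotential recalled in the introduction.

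First, I would observe that after the nodal trade chosen so that $F_0 \cup S_{-4}$ is deformed into the exceptional sphere $E$, the upper chamber of the resulting almost toric fibration coincides with the standard toric fibration of (the deformed) $\F_2$. Since the deformation is toric for $\F_2$, the previous proposition shows there are no Maslov index $0$ discs, and the superpotential in the upper chamber is the semi-Fano $\F_2$ superpotential. Translating symplectic areas via the identifications $[E] = [F_0]+[S_{-4}]$ so $\omega(E)=A-B$, $E \leftrightarrow S_{-2}$, $\omega(S_{+2})=A+B$, $\omega(S_{-2}+F)=A$, the upper-chamber superpotential in coordinates $(x', y')$ on product tori of $\F_2$ reads
\[W'(x', y') \;=\; x' + y' + \frac{T^{A+B}}{x' y'^2} + (1 + T^{A-B})\frac{T^{B}}{y'}.\]

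Next, I would determine the wall-crossing transformation between the coordinates $(x', y')$ on the upper chamber and the coordinates $(x, y)$ on the lower chamber (product tori of $\F_4$). The wall emanates from the singular Lagrangian fiber created by the nodal trade, and Maslov index $0$ holomorphic discs bounded by tori on the wall dictate the transformation. By an intersection-number argument analogous to Proposition \ref{index 0 discs for F_4}, the classes of these Maslov index $0$ discs are pinned down, and applying the general change-of-variables procedure of Proposition \ref{change of variables} yields an explicit wall-crossing formula of the shape $y' = y$, $x' = \Phi(x, y)$ whose nontrivial factor is built from the monomial $T^{A/2}x/y$ associated with the Lefschetz thimble of the singular fiber.

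Finally, substituting this transformation into $W'(x', y')$ should reproduce the claimed expression: the term $y'$ becomes $y$, the factor $(1 + T^{A-B}) T^B/y'$ carries over unchanged, and the two remaining terms $x' + T^{A+B}/(x' y'^2)$ recombine to give $(1+T^{A-B}) T^{A/2+B} x/y^2 + T^{A/2+B}/(xy^2) (1+T^{A/2}x/y)^3$. The main obstacle is the second step: determining the precise form of $\Phi$, and in particular justifying why substituting into $T^{A+B}/(x' y'^2)$ produces the cube $(1+T^{A/2}x/y)^3$ on the $T^{A/2+B}/(xy^2)$ term while the $T^{A/2+B}x/y^2$ term only acquires the linear factor $(1+T^{A-B})$. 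This asymmetry must come from a careful enumeration of Maslov index $0$ stable discs on the wall, or equivalently from a broken-line count in the scattering-diagram framework developed in Subsection $4.2$. Once $\Phi$ is established, the remainder is a routine algebraic substitution.
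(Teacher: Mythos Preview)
Your overall strategy---deform to $\F_2$, write down its superpotential, and pull it back through the wall-crossing---is exactly the paper's approach. The gap is in the form of the wall-crossing transformation. You posit $y'=y$ and $x'=\Phi(x,y)$, by analogy with Proposition~\ref{change of variables}. But that form was special to the $\F_3$ situation, where the wall ran in the fiber direction and the Maslov index~$0$ disc had boundary class $\partial\beta_2$; here the nodal trade is at a corner of the $\F_4$ polytope, and the index~$0$ disc bounded by tori on the wall is the Lefschetz thimble, whose boundary class corresponds to the monomial $T^{A/2}x/y$. A wall-crossing transformation always fixes the monomial attached to the disc class, so what is preserved is $x/y$, not $y$. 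Concretely (this is the standard transformation from \cite[\S 5.3]{DA07}, and needs no further enumeration of stable discs or scattering arguments),
\[
x'=x\Big(1+\tfrac{T^{A/2}x}{y}\Big)^{-1},\qquad y'=y\Big(1+\tfrac{T^{A/2}x}{y}\Big)^{-1}.
\]

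Once you use this, the ``asymmetry'' that worried you disappears. Writing $h=1+T^{A/2}x/y$ and using the paper's normalization of the $\F_2$ superpotential
\[
y'+\frac{T^B}{y'}+\frac{T^A}{y'}+T^{A/2}x'+\frac{T^{A/2+B}}{x'y'^2},
\]
one has $y'+T^{A/2}x'=(y+T^{A/2}x)/h=y$, then $(T^B+T^A)/y'=(1+T^{A-B})T^Bh/y=(1+T^{A-B})\big(T^B/y+T^{A/2+B}x/y^2\big)$, and finally $1/(x'y'^2)=h^3/(xy^2)$, which is where the cube comes from: one power of $h$ from $x'$ and two from $y'^2$. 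The factor $(1+T^{A-B})$ in front of $T^{A/2+B}x/y^2$ does not arise from transforming the $x'$ term at all; it comes from the $1/y'$ terms. So your third paragraph misattributes the origin of both factors, and the ``main obstacle'' you identify is an artifact of the incorrect ansatz $y'=y$.
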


        \begin{proof}
            Since $\F_2$ is semi-Fano, except for those index $0$ discs originating from the singular fiber, there are no other index $0$ discs. Thus, there is only one wall, i.e., the wall indicated in Figure \ref{image_9}, whose wall-crossing transformation is standard and identical (up to a coordinate change) to that in \cite[\S 5.3]{DA07}. The superpotential for the tori in $\F_4$ is related to the superpotential for $\F_2$ by this wall-crossing transformation.

            We choose affine coordinates for the moment polytope of $\F_2$ in Figure \ref{image_9} that are compatible with those of $\F_4$. They induce coordinates $x',y'$ for the mirror space $(\Lambda^*)^2$ of $\F_2$. The superpotential for $\F_2$ is then given by
            \begin{equation}\label{superpotential F_2}
                y'+\frac{T^B}{y'}+\frac{T^A}{y'}+T^{\frac{A}{2}}x'+\frac{T^{\frac{A}{2}+B}}{x'y'^2}.
            \end{equation}

            According to \cite[\S 5.3]{DA07}, the wall-crossing transformation is given by \begin{equation*}
                \left\{
            \begin{array}{ll}
                \frac{T^B}{y'}=\frac{T^B}{y}+\frac{T^{\frac{A}{2}+B}x}{y^2}, \\
                T^{\frac{A}{2}}x'+y'=y, \\
            \end{array}
            \right. \ \ \ \text{or equivalently,}\ \ \ 
            \left\{
            \begin{array}{ll}
                x'=x\cdot\left(1+\frac{T^{\frac{A}{2}}x}{y}\right)^{-1}, \\
                y'=y\cdot\left(1+\frac{T^{\frac{A}{2}}x}{y}\right)^{-1}. \\
            \end{array}
            \right.
            \end{equation*}
            The proposition then follows by applying this transformation to \eqref{superpotential F_2}.
        \end{proof}

        The marked difference between \eqref{superpotential1} and \eqref{superpotential2} illustrates the sensitive dependence of the SYZ mirror on the choice of perturbation in the non-Fano case. It also suggests that a general choice of regularization of the open Gromov-Witten moduli spaces can yield a mirror construction that differs significantly from those arising naturally via tropical methods (\cite{CPS24}, \cite{BGL25}).

    \subsection{A Scattering Diagram}
            
        So far, we have found two different expressions, \eqref{superpotential1} and \eqref{superpotential2}, for the superpotential, corresponding to two different perturbations of the complex structure, neither of which generates walls.
        It turns out that the expressions \eqref{superpotential1}, \eqref{superpotential2} are related by an infinite sequence of wall-crossing transformations in a scattering diagram. The superpotential \eqref{superpotential F_2} for $\F_2$ and the superpotential for $\F_0$
        \begin{equation}\label{superpotential F_0}
            y+\frac{T^B}{y}+T^{\frac{A}{2}}x+\frac{T^{\frac{A}{2}}}{x}
        \end{equation}
        also appear (correspond to two chambers) in the diagram. Scattering phenomena for this type of examples were first considered in \cite{CPS24}. 

        The main goal of this subsection is to relate the mirror superpotential \eqref{superpotential1} with other expressions stated above; for this reason, we will not give a rigorous definition of scattering diagrams. For a systematic treatment of scattering diagrams, we refer to \cite{GS11_2}, or to \cite{BCHL25} for an account adapted to the symplectic-geometric setting.

        We now construct the scattering diagram. We start with our standard Lagrangian torus fibration of $\F_4$ and perform two nodal trades: one at the top left corner of the moment polytope, the other at the top right corner. See Figure \ref{image_10}.

        \begin{figure}[htbp] 
            \centering  \includegraphics[width=0.8\textwidth]{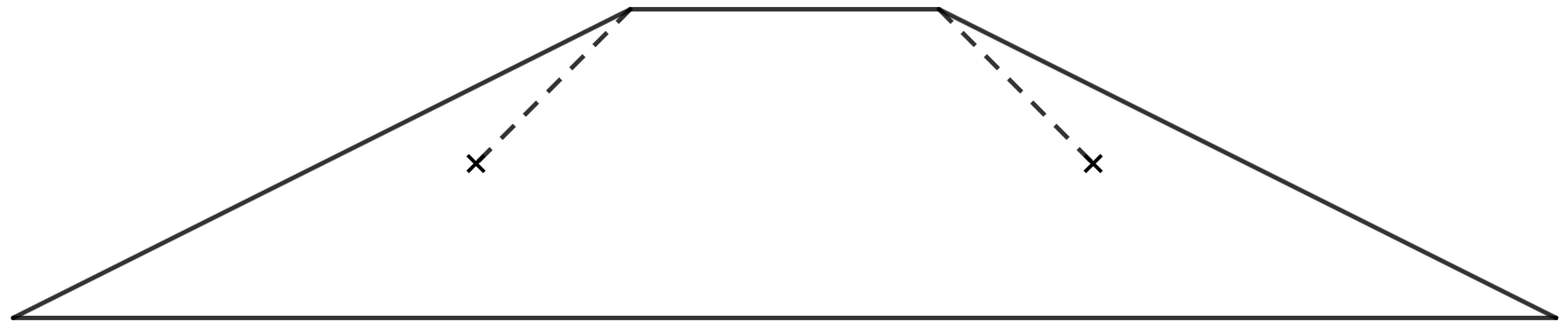} 
            \caption{The base of the Lagrangian torus fibration after two nodal trades}  
            \label{image_10}  
        \end{figure}

        We deform the complex structure accordingly so that $F_0\cup F_\infty\cup S_{-4}$ deforms into a holomorphic sphere. Such a deformation can be constructed by pulling back a deformation from $\F_2$ by the map $\Psi:\F_4\to \F_2$ considered at the beginning of Section $3$, with $\psi$ defined by $z\mapsto z+\frac{1}{z}$.

        Similar to the case in Subsection $4.1$, the resulting fibration can also be obtained by two nodal trades performed on the standard torus fibration of $\F_0$ (at the bottom left corner and at the bottom right corner of the moment polytope of $\F_0$).

        Each of the two nodal trades produces a wall which passes through the singular Lagrangian fiber. These two walls intersect at a point (which stands for a Lagrangian torus). These two walls can scatter additional walls at their intersection (infinitely many in our case, as we will see in Proposition \ref{scattering} below). The tori on these new walls will bound index $0$ discs, whose homotopy classes are positive integer linear combinations of the two classes of the index $0$ discs on the two initial walls. 

        Since $\F_0$ is Fano, apart from the index $0$ discs described above, there are no others. 
        
        This can be better seen in the tropical picture in Figure \ref{image_11}. The two straight lines represent the initial walls, while the rays represent the scattered walls. All of these are images of the Lagrangians that bound index $0$ discs under the map 
        \begin{align*}
            \mathrm{Log}:(\C^*)^2&\to \R^2,\\
            (z_1,z_2)&\mapsto (\mathrm{log}|z_1|,\mathrm{log}|z_2|),
        \end{align*}
        where the domain $(\C^*)^2$ is the complement of the toric boundary in $\F_0$. 

        \begin{figure}[htbp] 
            \centering  \includegraphics[width=0.3\textwidth]{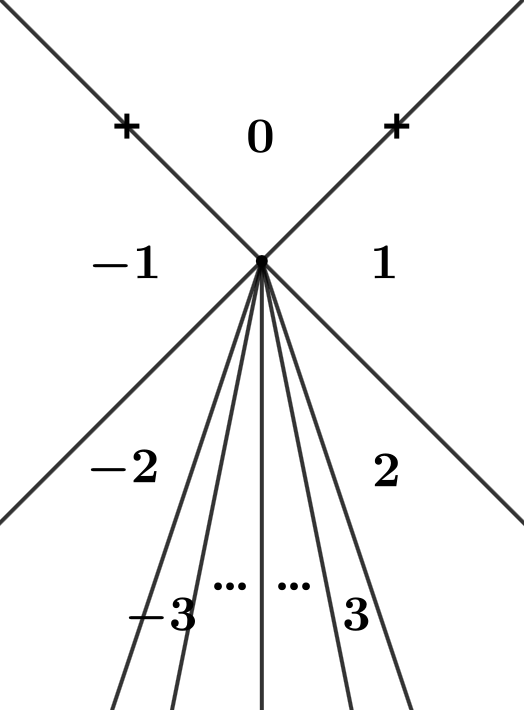} 
            \caption{The scattering diagram}  
            \label{image_11}  
        \end{figure}

        As usual, each chamber divided by these walls corresponds to a family of Lagrangian tori, whose affine coordinates induce coordinates on the uncorrected mirror space $(\Lambda^*)^2$. 
        
        Since this diagram can also be obtained by performing a nodal trade at each of the two bottom corners of the moment polytope of $\F_0$, the topmost chamber corresponds to product tori in $\F_0$. We label this chamber as $0$ and denote the induced coordinates as $(x_0,y_0)\in(\Lambda^*)^2$.

        Starting from the topmost chamber, after crossing one of the two initial walls, we reach one of the two adjacent chambers to the left and right of the topmost chamber. The tori in these two chambers are both product tori in \( \F_2 \). We label these chambers as \(-1\) and \(1\), with the induced coordinates denoted as \((x_{-1}, y_{-1})\) and \((x_1, y_1)\), respectively.

        The initial wall-crossing transformations are clear, namely,
        \begin{equation*}
                \left\{
            \begin{array}{ll}
                x_1=x_0\cdot\left(1+\frac{T^\frac{A}{2}}{x_0y_0}\right)^{-1}, \\
                y_1=y_0\cdot\left(1+\frac{T^\frac{A}{2}}{x_0y_0}\right), \\
            \end{array}
            \right. \ \ \ \text{and}\ \ \ 
            \left\{
            \begin{array}{ll}
                x_{-1}=x_{0}\cdot\left(1+\frac{T^\frac{A}{2}x_{0}}{y_{0}}\right), \\
                y_{-1}=y_{0}\cdot\left(1+\frac{T^\frac{A}{2}x_{0}}{y_{0}}\right). \\
            \end{array}
            \right.
        \end{equation*}

         For the initial wall that separates Chamber $0$ and Chamber $1$, we can read off the homotopy class of the index $0$ discs on the wall from the wall-crossing factor $\left(1+\frac{T^\frac{A}{2}x_{0}}{y_{0}}\right)$, thereby confirming that the slope of this wall in the tropical picture (as a line in $\R^2$) is 1. Similarly, the slope of the initial wall that separates Chamber $0$ and Chamber $-1$ is $-1$.

        Starting from the relative positions of the two initial walls and their wall-crossing transformations, one can determine the structure of the entire scattering diagram.

        \begin{proposition}\label{scattering}
            There are infinitely many scattered walls, and the divided chambers can be labeled with all integers in a clockwise manner, as in Figure \ref{image_11}. Denote the coordinates induced by the tori in Chamber $k$ as $(x_k,y_k)$. The wall separating Chamber $k$ and Chamber $k+1$ for $k\geq 1$ has slope $-2k+1$, and the associated wall-crossing transformation is\begin{equation*}
                \left\{
            \begin{array}{ll}
                x_{k+1}=x_k\cdot\left(1+T^{\frac{2k-1}{2}A}x_ky_k^{-2k+1}\right)^{2k-1}, \\
                y_{k+1}=y_k\cdot\left(1+T^{\frac{2k-1}{2}A}x_ky_k^{-2k+1}\right). \\
            \end{array}
            \right. 
        \end{equation*}The wall separating Chamber $-k$ and Chamber $-k-1$ for $k\geq 1$ has slope $2k-1$, and the associated wall-crossing transformation is\begin{equation*}
                \left\{
            \begin{array}{ll}
                x_{-k-1}=x_{-k}\cdot\left(1+T^{\frac{2k-1}{2}A}x_{-k}^{-1}y_{-k}^{-2k+1}\right)^{-2k+1}, \\
                y_{-k-1}=y_{-k}\cdot\left(1+T^{\frac{2k-1}{2}A}x_{-k}^{-1}y_{-k}^{-2k+1}\right). \\
            \end{array}
            \right. 
        \end{equation*}
            Apart from the above, there remains one vertical wall, represented by the middle ray in Figure \ref{image_11}, whose wall-crossing transformation is\begin{equation*}
                \left\{
            \begin{array}{ll}
                x_{+\infty}=x_{-\infty}\cdot\left(1-\frac{T^A}{y^2}\right)^{4}, \\
                y_{+\infty}=y_{-\infty}. \\
            \end{array}
            \right. 
        \end{equation*}
        \end{proposition}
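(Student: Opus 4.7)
The plan is to determine the scattering diagram by invoking the uniqueness principle for scattering: given the two initial walls (coming from the two nodal trades) and their wall-crossing transformations, there exists a unique consistent extension to a diagram such that the ordered composition of all wall-crossing automorphisms along any loop enclosing the central intersection point equals the identity in the completed tropical vertex group. It therefore suffices to exhibit the walls and transformations claimed in the proposition and verify this consistency.

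First, I would use the symmetry $x_0 \leftrightarrow x_0^{-1}$, which exchanges the two initial wall-crossing formulas. This forces the full consistent scattering diagram to be invariant under this involution, so the walls and transformations on the left side of the diagram (Chambers with $k \leq -1$) are the mirror images of those on the right ($k \geq 1$). This halves the work and lets me focus on the right-hand rays together with the central vertical wall.

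Second, I would compute the right-hand walls inductively on $k$. Assuming Chambers $0, 1, \ldots, k$ and the walls between consecutive pairs are as claimed, I rewrite the initial left-hand wall-crossing monomial $T^{A/2} x_0/y_0$ in the new coordinates $(x_k, y_k)$ via the composed transformations. The consistency condition at the central vertex then forces the existence of a new wall whose direction, monomial $T^{(2k-1)A/2} x_k y_k^{-2k+1}$, and exponents $2k-1$ and $1$ in the $x$- and $y$-transformation factors are read off from the leading uncancelled obstruction in the $T$-adic expansion. By induction this produces the entire sequence of scattered rays on the right; the symmetric statement for $k \leq -1$ follows from the first step.

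Third, I would handle the vertical accumulation wall: as $k \to \infty$, the scattered slopes $\mp(2k-1)$ on either side tend to vertical, so the rays accumulate onto the vertical direction in the tropical picture. The transformation on this limiting wall is determined by requiring that the ordered product around the central vertex closes up, and computing the remaining composition yields the claimed factor $(1 - T^A/y^2)^4$ acting on $x$ alone. The main obstacle will be the combinatorial bookkeeping in the induction, and in particular the convergence and the correct residual multiplicity along the vertical wall: one must track $T$-adic orders carefully enough both to guarantee convergence in the appropriate completion of the tropical vertex group and to pin down the precise exponent $4$ that balances the infinitely many scattered rays on the two sides. A concrete way to sidestep a messy direct calculation is to match this scattering with the analogous ones appearing in \cite{CPS24}, where the same combinatorial structure of accumulation onto a residual vertical wall arises for scattering diagrams of local $\F_0$-type geometries.
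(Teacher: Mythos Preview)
Your approach is correct in principle and rests on the same foundation as the paper's: the uniqueness theorem for consistent scattering diagrams (Kontsevich--Soibelman, Gross--Siebert) reduces the problem to exhibiting a consistent collection of walls matching the two initial ones. Your use of the $x_0 \leftrightarrow x_0^{-1}$ symmetry is a valid and useful reduction that the paper does not make explicit.

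Where you diverge from the paper is in the second and third steps. The paper does not carry out any inductive computation at all: it simply observes that, after a linear change of logarithmic coordinates on $\R^2$, the two initial walls and their attached automorphisms coincide with the initial data of a scattering diagram already worked out in the literature, namely Example~6.41 of \cite{Tropical} (the case $l_1=l_2=2$, i.e.\ the two-Kronecker scattering). Uniqueness then transports the entire answer, including the slopes $\mp(2k-1)$, the exponents $2k-1$, and the exponent $4$ on the central vertical ray, directly from that reference. Your proposal to run the order-by-order obstruction algorithm by hand would reproduce this, but it is exactly the computation that the cited example already packages; the paper's route buys you the result without redoing the combinatorics. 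Your closing suggestion to match with \cite{CPS24} is in the same spirit as what the paper actually does, though the paper cites \cite{Tropical} instead, where the specific $l_1=l_2=2$ pattern (including the accumulation to a vertical wall with factor of fourth power) is stated explicitly.

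One caution on your inductive sketch: reading off the next wall from ``the leading uncancelled obstruction'' is correct in outline, but in this particular example the scattered rays have $T$-adic orders growing linearly in $k$, so verifying both the precise exponents $2k-1$ and the residual exponent $4$ on the limit ray requires either a closed-form identity in the tropical vertex group or an appeal to the known answer; the induction alone does not make these numbers obvious without further work.
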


        \begin{proof}
            Given the initial walls and their wall-crossing transformations, there is a systematic and essentially unique procedure to insert rays to achieve consistency for the scattering diagram (see \cite{KS06} and \cite[\S 4.3]{GS11}). 
            
            In our case, our initial conditions match those of the scattering diagram in Example $6.41$ in \cite[\S 6.3]{Tropical} (the case where $l_1=l_2=2$), up to a linear change of logarithmic coordinates in $\R^2$. Thus, our resulting scattering diagram matches the one in Example $6.41$ in \cite{Tropical} up to the change of coordinates. The proposition then follows.

            Here are some remarks about the vertical wall. The coordinates\[(x_{+\infty},y_{+\infty}):=\lim_{k\to+\infty}(x_k,y_k)\in(\Lambda^*)^2\]are well-defined over the Novikov field $\Lambda$, because according to the wall-crossing transformation from $(x_k,y_k)$ to $(x_{k+1},y_{k+1})$, the sequence $\{(x_k,y_k)\}$ eventually stabilizes modulo any large power $T^C$ of the Novikov parameter. Similarly, the coordinates $(x_{-\infty}, y_{-\infty})$ are well-defined. $(x_{-\infty}, y_{-\infty})$ and $(x_{+\infty},y_{+\infty})$ are related by the last transformation in the proposition.
        \end{proof}

        Finally, we are able to relate the superpotentials we obtained using the scattering diagram.

        The superpotential for the tori in Chamber $0$ is that for $\F_0$, namely
        \[y_0+\frac{T^B}{y_0}+T^{\frac{A}{2}}x_0+\frac{T^{\frac{A}{2}}}{x_0}.\]
        The superpotentials for Chamber $-1$ and Chamber $1$ are those for $\F_2$. The one for Chamber $1$ is \eqref{superpotential F_2}, namely
        \[y_1+\frac{T^B}{y_1}+\frac{T^A}{y_1}+T^{\frac{A}{2}}x_1+\frac{T^{\frac{A}{2}+B}}{x_1y_1^2}.\] Similarly, the one for Chamber $-1$ is 
        \[y_{-1}+\frac{T^B}{y_{-1}}+\frac{T^A}{y_{-1}}+\frac{T^{\frac{A}{2}}}{x_{-1}}+\frac{T^{\frac{A}{2}+B}x_{-1}}{y_{-1}^2}.\]
        The superpotential for Chamber $2$ is the one $\eqref{superpotential2}$ in Proposition \ref{Vianna superpotential}, namely \[y_2+\left(1+T^{A-B}\right)\left(\frac{T^B}{y_2}+\frac{T^{\frac{A}{2}+B}x_2}{y_2^2}\right)+\frac{T^{\frac{A}{2}+B}}{x_2y_2^2}\left(1+\frac{T^{\frac{A}{2}}x_2}{y_2}\right)^3,\]which is already a superpotential for $\F_4$. Similarly, the one for Chamber $-2$ is
        \[y_{-2}+\left(1+T^{A-B}\right)\left(\frac{T^B}{y_{-2}}+\frac{T^{\frac{A}{2}+B}}{x_{-2}y_{-2}^2}\right)+\frac{T^{\frac{A}{2}+B}x_{-2}}{y_{-2}^2}\left(1+\frac{T^{\frac{A}{2}}}{x_{-2}y_{-2}}\right)^3.\]
        
        By successively applying the wall-crossing transformations from Proposition \ref{scattering}, we obtain infinitely many candidates for the superpotential of $\F_4$. Denote by $W_{k}(x_k,y_k)$ the superpotential for Chamber $k$. There is a well-defined limit
        \[W_{+\infty}(x,y):=\lim_{k\to +\infty}W_k(x,y)\in\Lambda((x,x^{-1},y,y^{-1})),\]since the sequence $\{W_k(x,y)\}$ eventually stabilizes modulo any $T^C$. It turns out that 
        \[W_{+\infty}(x,y)=y+\frac{T^A}{y}+\frac{T^B}{y}\left(\sum_{k=0}^\infty (2k+1)\left(\frac{T^A}{y^2}\right)^k\right)+\frac{T^{\frac{A}{2}+B}}{xy^2}
                +\frac{T^{\frac{A}{2}+B}x}{y^2}\left(\sum_{k=0}^\infty \binom{k+3}{3}\left(\frac{T^A}{y^2}\right)^k\right),\]
        \[=y+\frac{T^A}{y}+T^By\cdot\frac{y^2+T^A}{(y^2-T^A)^2}+\frac{T^{\frac{A}{2}+B}}{xy^2}+\frac{T^{\frac{A}{2}+B}x}{y^2}\cdot\left(1-\frac{T^A}{y^2}\right)^{-4}\]
        \[=W\left(x\cdot\left(1-\frac{T^A}{y^2}\right)^{-2},y\right),\]where $W$ denotes the superpotential \eqref{superpotential1} we obtained in Section $3$, namely
        \[W(x,y)=y+\frac{T^A}{y}+T^By\cdot\frac{y^2+T^A}{(y^2-T^A)^2}+\left(\frac{T^{\frac{A}{2}+B}}{xy^2}
                +\frac{T^{\frac{A}{2}+B}x}{y^2}\right)\cdot\left(1-\frac{T^A}{y^2}\right)^{-2}.\]
        
        Similarly, we have
        \[W_{-\infty}(x,y)=W\left(x\cdot\left(1-\frac{T^A}{y^2}\right)^2,y\right).\]

        The above relationships among \( W_{-\infty} \), \( W \), and \( W_{+\infty} \) indicate that the vertical wall in Figure \ref{image_11} should be regarded as two coinciding walls, whose wall-crossing transformations are both $x\mapsto x\left(1-\frac{T^A}{y^2}\right)^2$. Moreover, the superpotential \( W(x,y) \) corresponds to the `chamber between these two walls' (which, under the deformation chosen in this subsection, does not exist).

    \bibliographystyle{plain}
    \bibliography{ref}

@article{MM04,
	author = "M. Manetti",
	title = "Lectures on deformations of complex manifolds",
        year = "2004",
        journal = "Rendiconti di Matematica, Serie VII",
        volume = "24",
        pages = "1-183"
}

@article{DA07,
	author = "D. Auroux",
	title  = "{Mirror symmetry and T-duality in the complement of an anticanonical divisor}",
	year = "2007",
        journal = "Journal of Gökova Geometry Topology",
        volume = "1",
        pages = "51-91"
}

@article{DA09,
	author = "D. Auroux",
	title  = "{Special Lagrangian fibrations, wall-crossing, and mirror symmetry}",
	year = "2009",
        journal = "Surveys in Differential Geometry",
        volume = "13",
        pages = "1-47"
}

@article{AAK16,
	author = "M. Abouzaid and D. Auroux and L. Katzarkov",
	title  = "{Lagrangian fibrations on blowups of toric varieties and mirror symmetry for hypersurfaces}",
	year = "2016",
        journal = "Publications mathématiques de l'IHÉS",
        volume = "123",
        pages = "199-282"
}

@article{DA23,
	author = "D. Auroux",
	title  = "{Holomorphic discs of negative Maslov index and extended deformations in mirror symmetry}",
	year = "2023",
        journal = "arXiv preprint arXiv:2309.13010",
}

@article{HY25,
      title="{Family Floer program and non-archimedean SYZ mirror construction}", 
      author="H. Yuan",
      year="2025",
      journal = "arXiv preprint arXiv:2003.06106"
}

@article{BGL25,
      title="{Gromov-Witten invariants and mirror symmetry for non-Fano varieties via tropical disks}", 
      author="P. Berglund and T. Gräfnitz and M. Lathwood",
      year="2025",
      journal = "arXiv preprint arXiv:2404.16782"
}

@article{CPS24,
	author = "M. Carl and M. Pumperla and B. Siebert",
	title = "{A tropical view on Landau-Ginzburg models}",
        year = "2024",
        journal = "Acta Mathematica Sinica",
        volume = "40",
        pages = "329-382"
}

@article{CL14,
	author = "K. Chan and S.-C. Lau",
	title = "{Open Gromov-Witten invariants and superpotentials for semi-Fano toric surfaces}",
        year = "2014",
        journal = "International Mathematics Research Notices",
        volume = "2014(14)",
        pages = "3759-3789"
}

@article{FOOO10,
	author = "K. Fukaya and Y.-G. Oh and H. Ohta and K. Ono",
	title = "{Lagrangian Floer theory on compact toric manifolds, I}",
        year = "2010",
        journal = "Duke Mathematical Journal",
        volume = "151(1)",
        pages = "23-175"
}

@article{FOOO12,
	author = "K. Fukaya and Y.-G. Oh and H. Ohta and K. Ono",
	title = "{Toric degeneration and non-displaceable Lagrangian tori in $S^2\times S^2$}",
        year = "2012",
        journal = "International Mathematics Research Notices",
        volume = "2012(13)",
        pages = "2942–2993"
}

@article{SYZ96,
	author = "A. Strominger and S.-T. Yau and E. Zaslow",
	title = "{Mirror symmetry is T-duality}",
        year = "1996",
        journal = "Nuclear Physics B",
        volume = "479",
        pages = "243-259"
}

@article{CHC04,
	author = "C.-H. Cho",
	title = "{Holomorphic disc, spin structures and Floer cohomology of the Clifford torus}",
        year = "2004",
        journal = "International Mathematics Research Notices",
        volume = "2004(135)",
        pages = "1803-1843"
}

@article{HV00,
    author = "K. Hori and C. Vafa",
    title = "{Mirror symmetry}",
    journal = "arXiv preprint hep-th/0002222",
    year = "2000"
}

@article{CO06,
    author = "C.-H. Cho and Y.-G. Oh",
    title = "{Floer cohomology and disc instantons of Lagrangian torus fibers in Fano toric manifolds}",
    journal = "Asian Journal of Mathematics",
    volume = "10",
    pages = "773-814",
    year = "2006"
}

@book{AlgebraicSurfaces,
series={London Mathematical Society Student Texts}, title={Complex Algebraic Surfaces}, 
publisher={Cambridge University Press}, 
author={A. Beauville}, 
year={1996}}

@book{J-curve,
    author = "D. McDuff and D. Salamon",
    title = "J-holomorphic Curves and Symplectic Topology",
    publisher = "American Mathematical Society",
    year = "2012"
}

@book{LagrangianFibrations,
    author = "J. Evans",
    title = "Lectures on Lagrangian Torus Fibrations",
    publisher = "Cambridge University Press",
    year = "2023"
}

@book{Tropical,
    author = "M. Gross",
    title = "Tropical Geometry and Mirror Symmetry",
    publisher = "American Mathematical Society",
    year = "2011"
}

@article{KS06,
    author = "M. Kontsevich and Y. Soibelman",
    title = "{Affine structures and non-Archimedean analytic spaces}",
    journal = "The Unity of Mathematics, Progress in Mathematics",
    volume = "244",
    pages = "321-385",
    year = "2006"
}

@article{GS11,
    author = "M. Gross and B. Siebert",
    title = "{An invitation to toric degenerations}",
    journal = "Surveys in Differential Geometry",
    volume = "16(1)",
    pages = "43-78",
    year = "2011"
}

@article{Chan11,
    author = "K. Chan",
    title = "{A formula equating open and closed Gromov-Witten invariants and its applications to mirror symmetry}",
    journal = "Pacific Journal of Mathematics",
    volume = "254(2)",
    pages = "275-293",
    year = "2011"
}

@article{GS11_2,
    author = "M. Gross and B. Siebert",
    title = "{From real affine geometry to complex geometry}",
    journal = "Annals of Mathematics",
    volume = "174(3)",
    pages = "1301-1428",
    year = "2011"
}

@article{BCHL25,
    author = "S. Bardwell-Evans and M.-W. M. Cheung and H. Hong and Y.-S. Lin",
    title = "{Scattering diagrams from holomorphic discs in log Calabi-Yau surfaces}",
    journal = "Journal of Differential Geometry",
    volume = "130(1)",
    pages = "71-150",
    year = "2025"
}

@book{FOOObook,
    author = "K. Fukaya and Y.-G. Oh and H. Ohta and K. Ono",
    title = "{Lagrangian Intersection Floer Theory: Anomaly and Obstruction, Part I}",
    series = "AMS/IP Studies in Advanced Mathematics",
    publisher = "American Mathematical Society",
    year = "2009"
}

@article{GHK15,
    author = "M. Gross and P. Hacking and S. Keel",
    title = "{Mirror symmetry for log Calabi-Yau surfaces I}",
    journal = "Publications Mathématiques de l'IHÉS",
    volume = "122",
    pages = "65-168",
    year = "2015"
}

@article{GS22,
    author = "M. Gross and B. Siebert",
    title = "{The canonical wall structure and intrinsic mirror symmetry}",
    journal = "Inventiones Mathematicae",
    volume = "229",
    pages = "1101-1202",
    year = "2022"
}

@article{GS19,
	author = "M. Gross and B. Siebert",
	title  = "{Intrinsic mirror symmetry}",
	year = "2019",
        journal = "arXiv preprint arXiv:1909.07649",
}

\end{document}